\tikzstyle{v} = [circle, draw, inner sep=2pt, minimum size=3pt, fill=black]
\theoremstyle{plain}% default
\newtheorem{theorem}{Theorem}[section]
\newtheorem{lemma}[theorem]{Lemma}
\newtheorem{proposition}[theorem]{Proposition}
\newtheorem{corollary}[theorem]{Corollary}
\theoremstyle{definition}
\newtheorem{definition}[theorem]{Definition}
\newtheorem{example}[theorem]{Example}
\newtheorem{remark}[theorem]{Remark}
\newcommand{\DELTA}{\mathit{\Delta}}
\newcommand{\LAMBDA}{\mathit{\Lambda}}
\newcommand{\GAMMA}{\mathit{\Gamma}}
\newcommand{\PSI}{\mathit{\Psi}}
\DeclareMathOperator{\Char}{char}
\DeclareMathOperator{\Der}{Der}
\newcommand{\odd}{\textup{odd}}
\newcommand{\even}{\textup{even}}
\newcommand{\bal}{\textup{bal}}
\newcommand{\A}{\mathcal{A}}
\newcommand{\Amu}{\mathcal{A},\mu}
\numberwithin{equation}{section}
\begin{document}
\title{Exponents of $2$-multiarrangements of three lines over fields of positive characteristic}
\author{
Shuhei Tsujie
\thanks{Department of Mathematics, Hokkaido University of Education, Asahikawa, Hokkaido 070-8621, Japan. 
E-mail:tsujie.shuhei@a.hokkyodai.ac.jp}
\and
Ryo Uchiumi
\thanks{
Department of Mathematics, Graduate School of Science, The University of Osaka, Toyonaka, Osaka 560-0043, Japan. 
E-mail:uchiumi.ryou.1xu@ecs.osaka-u.ac.jp}
}
\date{}
\maketitle
\begin{abstract}
Wakamiko determined bases and the exponents for multiarrangements of three lines over a field of characteristic zero.
In this paper, we study the exponents of multiarrangements of three lines for the case of positive characteristic. 
We provide an effective algorithm for computing the exponents.
Furthermore, we prove that the multiplicity lattice has plenty of symmetries. 
We also discuss the validity of bases constructed from binomial expansions in the case of positive characteristic.
\end{abstract}

{\footnotesize \textit{Keywords}: 
multiarrangement, 
derivation module, 
exponents,
positive characteristic 
}

{\footnotesize \textit{2020 MSC}: 
52C35, %Arrangements of points, ﬂats, hyperplanes (aspects of discrete geometry) [52Cxx Discrete geometry]
32S22  %Relations with arrangements of hyperplanes [32Sxx Complex singularities]
%05C15, %Coloring of graphs and hypergraphs [05Cxx Graph theory]
%05C22, %Signed and weighted graphs [05Cxx Graph theory]
%20F55,  %Reﬂection and Coxeter groups (group-theoretic aspects) [20Fxx Special aspects of infinite or finite groups]
%13N15 %Derivations [13Nxx Differential algebra]
}

\tableofcontents

%%%%%%%%%%%%%%%%%%%%%%%%%%%%%%%%%%%%%%%%%

\section{Introduction}
Let $V$ be an $\ell$-dimensional vector space over a field $\mathbb{F}$.
Let $\{x_1,\ldots,x_\ell\}$ be a basis for $V^\ast$ and let $S = \mathbb{F}[x_1,\ldots,x_\ell]$ be the polynomial ring.
Define $\Der_S = \bigoplus_{i=1}^{\ell} S\partial_{x_i}$ as the module of polynomial vector fields, where $\partial_{x_i} = \frac{\partial}{\partial{x_i}}$.

A finite collection $\A = \{H_1,\ldots,H_n\}$ of linear hyperplanes in $V$ is called a (central) \textbf{hyperplane arrangement}. 
The pair $(\A,\mu)$ consisting of $\A$ and a map $\mu : \A \longrightarrow  \mathbb{Z}_{\geq 0}$ is called a $\ell$-\textbf{multiarrangement}.
We say that $\mu$ is \textbf{balanced} if $|\mu|/2 \geq \max_{H\in \mathcal{A}}\mu(H)$, where $|\mu| \coloneqq \sum_{H \in \A}\mu(H)$. 
Otherwise, we say that $\mu$ is \textbf{unbalanced}. 
For each $H \in \A$, choose a linear form $\alpha_H \in V^\ast$ such that $H = \ker{\alpha_H}$.
The polynomial 
\begin{align}
Q(\A,\mu) \coloneqq \prod_{H \in \A}\alpha_H^{\mu(H)}
\end{align}
is called the \textbf{defining polynomial} of the multiarrangement $(\A,\mu)$.

Define the \textbf{module of logarithmic vector fields} $D(\A,\mu)$ by 
\begin{align}
    D(\A,\mu) \coloneqq \Set{\theta \in \Der_S | \theta(\alpha_H) \in \alpha_H^{\mu(H)}S \quad \text{for any $H \in \A$}}.
\end{align}
The multiarrangement $(\A,\mu)$ is said to be \textbf{free with exponents} $\exp(\A,\mu) = (d_1,\ldots,d_\ell)$ if $D(\A,\mu)$ is a free $S$-module with a basis $\{\theta_1,\ldots,\theta_\ell\} \subseteq D(\A,\mu)$ of the form $\theta_i = \sum_{j=1}^\ell f_{ij}\partial_{x_j}$, where each non-zero $f_{ij}$ is a homogeneous polynomial of $\deg{f_{ij}} = d_i$.
We define $\deg{\theta_i} = d_i$.

Freeness is one of central topics of study of arrangements. 
In particular, it is still an open problem whether the freeness of a hyperplane arrangement over a fixed field is determined solely by the structure of its intersection lattice (Terao conjecture \cite[Problem 1]{terao1983exponents-spc}).
Freeness of multiarrangements plays an important role for understanding freenss of arrangements \cite{yoshinaga2004characterization-im, yoshinaga2005freeness-botlms, abe2013free-mz}.

For elements $\theta_1,\ldots,\theta_\ell \in D(\A,\mu)$ of the form $\theta_i = \sum_{j=1}^\ell f_{ij}\partial_{x_j}$, define the matrix $M(\theta_1,\ldots,\theta_\ell)$ by
\begin{align}
    M(\theta_1,\ldots,\theta_\ell) \coloneqq \Bigl(f_{ij}\Bigr)_{ij} = \begin{pmatrix}
f_{11} & \cdots & f_{1\ell}\\
\vdots & \ddots & \vdots\\
f_{\ell 1} & \cdots & f_{\ell\ell}
\end{pmatrix}.
\end{align}

Using this matrix, we have a useful criterion for freeness proved by K. Saito for simple arrangements \cite[Theorem 1.8]{saito1980theory-jotfosuotsim} and by Ziegler for multiarrangements \cite[Theorem and Definition 8]{Zi}. 

\begin{theorem}[Saito's criterion] \label{Saito-Ziegler}
Let $(\A,\mu)$ be a multiarrangement.
For $\theta_1,\ldots,\theta_\ell \in D(\A,\mu)$, the following are equivalent:
\begin{itemize}
\item $\{\theta_1,\ldots,\theta_\ell\}$ is a basis for $D(\A,\mu)$;
\item $\theta_1,\ldots,\theta_\ell$ are linearly independent over $S$ and $\sum_{i=1}^\ell \deg{\theta_i} = \sum_{H \in \A}\mu(H)$;
\item $\det{M(\theta_1,\ldots,\theta_\ell)} \doteq Q(\A,\mu)$,
\end{itemize}
where $A \doteq B$ means that there exists $c \in \mathbb{F}^\times$ such that $A = cB$.
\end{theorem}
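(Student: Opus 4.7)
The plan is to prove the equivalence via a divisibility lemma, after which (2) $\Leftrightarrow$ (3) and (3) $\Rightarrow$ (1) are quick, and the main work goes into (1) $\Rightarrow$ (3). \emph{Divisibility lemma:} For any $\theta_1,\ldots,\theta_\ell \in D(\A,\mu)$, $Q(\A,\mu) \mid \det M(\theta_1,\ldots,\theta_\ell)$. I would prove this one hyperplane at a time: fix $H \in \A$ and change coordinates on $V^\ast$ so that $\alpha_H = x_1$; then the first column of $M$ consists of $\theta_i(x_1) = f_{i1} \in x_1^{\mu(H)} S$, so $\alpha_H^{\mu(H)}$ divides $\det M$. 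Distinct hyperplanes give pairwise coprime linear forms in the UFD $S$, so $Q = \prod_H \alpha_H^{\mu(H)}$ divides $\det M$.

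Using the lemma, (2) $\Rightarrow$ (3) is immediate: write $\det M = Q \cdot g$; linear independence forces $g \neq 0$, and comparing degrees gives $\deg g = 0$, so $g \in \mathbb{F}^\times$. The converse (3) $\Rightarrow$ (2) is trivial by taking degrees. For (3) $\Rightarrow$ (1), the nonvanishing of $\det M$ makes $\{\theta_i\}$ $\Frac(S)$-linearly independent; for any $\theta \in D(\A,\mu)$, expand $\theta = \sum g_i \theta_i$ over $\Frac(S)$ and apply Cramer's rule: $g_i = \det N_i / \det M$, where $N_i$ is $M$ with its $i$-th row replaced by the coefficient vector of $\theta$. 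The tuple defining $N_i$ still lies in $D(\A,\mu)^\ell$, so the lemma gives $Q \mid \det N_i$; combined with $\det M \doteq Q$, each $g_i$ lies in $S$, so $\{\theta_i\}$ generates $D(\A,\mu)$ and is therefore a basis.

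The main obstacle is (1) $\Rightarrow$ (3). Since $Q\partial_{x_j} \in D(\A,\mu)$ for each $j$, we can write $Q\partial_{x_j} = \sum_i g_{ij} \theta_i$ with $g_{ij} \in S$; taking determinants of the resulting matrix identity yields $\det M \mid Q^\ell$. Combined with the lemma, the irreducible factorization takes the form $\det M = c\prod_H \alpha_H^{b_H}$ with $c \in \mathbb{F}^\times$ and $b_H \geq \mu(H)$. To pin down $b_H = \mu(H)$, I would localize at the height-one prime $(\alpha_H) \subset S$: all other $\alpha_{H'}$ become units in $S_{(\alpha_H)}$, so $D(\A,\mu) \otimes S_{(\alpha_H)}$ reduces to the single-hyperplane module, visibly free with determinant $\doteq \alpha_H^{\mu(H)}$. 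The basis $\{\theta_i\}$ remains a basis after localization, and since any two bases of a free module over a local ring have determinants differing by a unit, the $\alpha_H$-adic valuation of $\det M$ equals $\mu(H)$ for every $H$. This forces $b_H = \mu(H)$ and hence $\det M \doteq Q$.
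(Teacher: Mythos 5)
The paper does not prove this theorem; it cites it directly as a known result of Saito (simple case) and Ziegler (multi case), so there is no in-paper proof to compare against. Your proposal is a self-contained proof and it is correct.

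A few remarks on the content. The divisibility lemma, Cramer's rule for $(3)\Rightarrow(1)$, and the degree comparison for $(2)\Leftrightarrow(3)$ are all standard and sound; the small implicit assumption in $(2)\Rightarrow(3)$ that the $\theta_i$ are homogeneous is the same one implicit in the theorem statement (the notation $\deg\theta_i$ already presupposes it). For $(1)\Rightarrow(3)$, the two-sided divisibility $Q\mid\det M\mid Q^\ell$ reduces the problem to pinning down each exponent $b_H$, and your localization argument does that cleanly: since $D(\A,\mu)=\bigcap_H D_H$ with each $D_H$ the kernel of $\theta\mapsto\theta(\alpha_H)\bmod\alpha_H^{\mu(H)}$, and localization at $(\alpha_H)$ is flat and hence exact and commutes with finite intersections, $D(\A,\mu)_{(\alpha_H)}=(D_H)_{(\alpha_H)}$, which is visibly free with determinant of $\alpha_H$-adic valuation $\mu(H)$. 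Since $\{\theta_i\}$ remains a basis after localization and two bases of a free module have coefficient determinants differing by a unit of the local ring, $b_H=\mu(H)$. One could instead argue $(1)\Rightarrow(3)$ by noting that freeness gives a homogeneous basis whose degrees sum to $|\mu|$ via the Hilbert series of $D(\A,\mu)$, but your localization route is arguably more elementary and avoids invoking that machinery. Well done.
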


In general, it is not easy to determine whether a multiarrangement is free or not. 
Every $2$-multiarrangement is known to be free {\cite[Corollary 7]{Zi}}.
However, it is also not easy to compute a basis and exponents. 

The first non-trivial case is the $2$-multiarrangement consisting of three lines. 
By a suitable linear transformation, we can assume that 
\begin{align}
\A = \{H_1,H_2,H_3\},\qquad H_1 \coloneqq \ker{x},\quad H_2 \coloneqq \ker{y},\quad H_3 \coloneqq \ker{(x+y)}, \label{A}
\end{align}
where $\{x,y\}$ denotes a basis for $V^{\ast}$. 
Let $\mu_{i} \coloneqq \mu(H_{i})$ for $i \in \{1,2,3\}$. 
Note that $\mu_{1},\mu_{2}$ and, $\mu_{3}$ have the same role by symmetry of $H_1$, $H_2$, and $H_3$.

When $\Char{\mathbb{F}} = 0$, Wakamiko gave a basis for $D(\Amu)$ and the exponents explicitly as follows.

\begin{theorem}[{Wakamiko \cite[Theorem 1.5]{W}}]\label{Wakamiko}
Suppose that $\Char{\mathbb{F}} = 0$ and $\mu_{3} \geq \max\{\mu_{1},\mu_{2}\}$. 
Then the following statements hold: 
\begin{enumerate}[label=(\arabic*)]
    \item\label{Wakamiko binomial type} If $\mu_1 + \mu_2 \leq \mu_3 + 1 $, then $\exp(\Amu) = (\mu_1+\mu_2,\, \mu_3)$ and 
\begin{align}
\left\{
\sum_{j = \mu_1}^{\mu_3}\binom{\mu_3}{j}x^jy^{\mu_3-j}\partial_x + \sum_{j=0}^{\mu_1-1}\binom{\mu_3}{j}x^jy^{\mu_3-j}\partial_y,\quad 
x^{\mu_1}y^{\mu_2}(\partial_y - \partial_x)
\right\}
\end{align}
is a basis for $D(\Amu)$.
    \item If $\mu_1 + \mu_2 > \mu_3 + 1 $, then 
    \begin{align}
    	\exp(\Amu) = \left(\left\lfloor \frac{|\mu|}{2}\right\rfloor,\  \left\lceil \frac{|\mu|}{2} \right\rceil \right),
    \end{align}
    where $\lfloor x \rfloor = \max{\Set{n \in \mathbb{Z} | n \leq x}}$, and $\lceil x \rceil = \min{\Set{n \in \mathbb{Z} | x \leq n}}$.
    In this case, a basis for $D(\Amu)$ can be expressed using generalized binomial coefficients (see \cite{W} for the description).
\end{enumerate}
\end{theorem}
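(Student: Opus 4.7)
The strategy is to apply Saito's criterion (Theorem~\ref{Saito-Ziegler}). Since every $2$-multiarrangement is free \cite[Corollary 7]{Zi}, $D(\Amu)$ already admits a basis of two homogeneous derivations, and the problem reduces to exhibiting $\theta_1,\theta_2 \in D(\Amu)$ whose degrees sum to $|\mu|$ and which are linearly independent over $S$.

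For Part (1) I would verify the candidate basis directly. The second derivation $\theta_2 = x^{\mu_1}y^{\mu_2}(\partial_y-\partial_x)$ is immediate, since $\theta_2(x) = -x^{\mu_1}y^{\mu_2}$, $\theta_2(y) = x^{\mu_1}y^{\mu_2}$, and $\theta_2(x+y) = 0$ all satisfy the required divisibilities. For $\theta_1$, the $\partial_x$-component $\theta_1(x) = \sum_{j=\mu_1}^{\mu_3}\binom{\mu_3}{j}x^jy^{\mu_3-j}$ is divisible by $x^{\mu_1}$ because the summation begins at $j=\mu_1$; the $\partial_y$-component $\theta_1(y) = \sum_{j=0}^{\mu_1-1}\binom{\mu_3}{j}x^jy^{\mu_3-j}$ is divisible by $y^{\mu_2}$ because each term carries $y^{\mu_3-j}$ with $\mu_3-j \geq \mu_3-\mu_1+1 \geq \mu_2$ under the hypothesis $\mu_1+\mu_2 \leq \mu_3+1$; and the two halves recombine via the binomial theorem into $\theta_1(x+y) = (x+y)^{\mu_3}$. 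Since $\deg\theta_1 + \deg\theta_2 = \mu_3 + (\mu_1+\mu_2) = |\mu|$ and linear independence is clear by inspection of top-degree terms, Saito's criterion delivers the claim.

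For Part (2) no explicit basis is written in the statement, so a different route is required. The natural plan is to pin down the minimal degree $d_1$ of a non-zero element of $D(\Amu)$; by freeness, the other exponent must then be $d_2 = |\mu|-d_1$. Writing a candidate as $\theta = x^{\mu_1}a\,\partial_x + y^{\mu_2}b\,\partial_y$ with $a,b$ homogeneous of degrees $d-\mu_1$ and $d-\mu_2$, the only remaining condition $(x+y)^{\mu_3} \mid x^{\mu_1}a + y^{\mu_2}b$ amounts to at most $\mu_3$ linear equations on $2d-\mu_1-\mu_2+2$ parameters, so a non-zero $\theta$ of degree $d = \lfloor|\mu|/2\rfloor$ exists by a parameter count, giving $d_1 \leq \lfloor|\mu|/2\rfloor$. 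The matching lower bound follows once one verifies that the relevant system of linear equations has full rank $\mu_3$ for $d < \lfloor|\mu|/2\rfloor$, which is where the balanced hypothesis $\mu_1+\mu_2 > \mu_3+1$ enters decisively.

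The hardest step is producing an explicit basis realising the exponents in Part (2). Wakamiko's solution is to use \emph{generalized binomial coefficients}, coefficients of the form $\binom{\alpha}{j}$ with non-integer $\alpha$, arranged so that the critical divisibility by $(x+y)^{\mu_3}$ reduces to a Vandermonde-type identity; Saito's criterion then closes the argument in the same manner as in Part (1). This construction relies on the invertibility in $\mathbb{F}$ of certain integer factors --- automatic in characteristic zero, but precisely the point of failure that motivates the positive-characteristic investigation pursued in the rest of the paper.
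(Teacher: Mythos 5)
This theorem is stated in the paper as a citation to Wakamiko \cite[Theorem 1.5]{W}; the paper itself supplies no proof of it, so there is no internal argument to compare against. Assessing your proposal on its own merits:

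Your treatment of Part (1) is correct and complete. The divisibility checks for $\theta_2$ are immediate, and for $\theta_1$ the key observation that $\mu_3 - j \geq \mu_3 - \mu_1 + 1 \geq \mu_2$ for $j \leq \mu_1 - 1$ (using $\mu_1 + \mu_2 \leq \mu_3 + 1$) is exactly the point, together with $\theta_1(x+y) = (x+y)^{\mu_3}$ via the binomial theorem. In fact you could simply compute $\det M(\theta_1,\theta_2) = x^{\mu_1}y^{\mu_2}(x+y)^{\mu_3} = Q(\A,\mu)$ directly, after which Saito's criterion closes the argument without any separate independence check — this is the same computation the paper itself performs in Section~\ref{section:gammam} when it introduces $\psi_\mu$ and $\psi'_\mu$ and shows $\det M(\psi_\mu,\psi'_\mu) = Q(\A,\mu)$.

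Part (2), however, has a genuine gap. The upper bound $d_1 \leq \lfloor|\mu|/2\rfloor$ which you derive by parameter counting is in fact trivial: since the arrangement is always free with $d_1 + d_2 = |\mu|$ and $d_1 \leq d_2$, the smaller exponent is automatically at most $\lfloor|\mu|/2\rfloor$, with no parameter count needed. The entire content of Part (2) is the \emph{lower} bound, i.e.\ that no nonzero derivation of degree $< \lfloor|\mu|/2\rfloor$ exists when $\mu_1 + \mu_2 > \mu_3 + 1$, which you phrase as the linear system having full rank $\mu_3$. You assert this is where the balanced hypothesis "enters decisively" but you do not prove it, and this rank computation is exactly where all the work in Wakamiko's argument (and the role of characteristic zero) lies. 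As stated your Part (2) is an outline that identifies the right obstruction but does not overcome it.
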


\begin{remark}
Feigin, Wang, and Yoshinaga \cite[Theorem 2.3]{FWY} constructed bases for $D(\Amu)$ with integral expressions for balanced cases. 
\end{remark}

Let $\DELTA(\mu)$ denote the absolute value of the difference between the exponents of $(\A,\mu)$.
By \Cref{Wakamiko}, the exponents can be divided into unbalanced and balanced cases as follows. 
\begin{corollary}
Suppose that $\Char{\mathbb{F}} = 0$. 
    \begin{enumerate}[label=(\arabic*)]
        \item If $\mu$ is unbalanced, then 
        \begin{align}
            \exp(\Amu) = \bigl(|\mu| - \max{(\mu)},\, \max{(\mu)}\bigr),\quad \DELTA(\mu) = 2 \max(\mu) - |\mu|.
        \end{align}
        \item If $\mu$ is balanced, then
        \begin{align}
            \exp(\Amu) = \left(\left\lfloor \frac{|\mu|}{2}\right\rfloor,\  \left\lceil \frac{|\mu|}{2} \right\rceil \right),\qquad
        \DELTA(\mu) = \begin{cases*}
            1 & if $|\mu|$ is odd;\\
            0 & if $|\mu|$ is even.
        \end{cases*}
        \end{align}
    \end{enumerate}
\end{corollary}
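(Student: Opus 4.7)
The plan is to derive the corollary directly from \Cref{Wakamiko} by a routine case analysis. By the symmetry of $\mu_1,\mu_2,\mu_3$ noted after \eqref{A}, I assume without loss of generality that $\mu_3 = \max(\mu)$, so that the hypothesis of \Cref{Wakamiko} is satisfied. Under this normalization, the definition of balanced unpacks as $\mu_3 \leq |\mu|/2 \Longleftrightarrow \mu_3 \leq \mu_1 + \mu_2$, and correspondingly $\mu$ is unbalanced precisely when $\mu_3 \geq \mu_1 + \mu_2 + 1$.

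For the unbalanced case, the inequality $\mu_1+\mu_2 \leq \mu_3 - 1 < \mu_3 + 1$ places us in \Cref{Wakamiko}\ref{Wakamiko binomial type}, yielding $\exp(\A,\mu) = (\mu_1+\mu_2, \mu_3)$. Substituting $\mu_3 = \max(\mu)$ and $\mu_1+\mu_2 = |\mu| - \max(\mu)$ gives the stated exponents, and $\DELTA(\mu) = \mu_3 - (\mu_1+\mu_2) = 2\max(\mu) - |\mu|$ follows at once.

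For the balanced case, I split according to which subcase of \Cref{Wakamiko} applies. If $\mu_1+\mu_2 > \mu_3+1$, then \Cref{Wakamiko}(2) already states the exponents in the required form, and $\DELTA(\mu) = \lceil |\mu|/2\rceil - \lfloor |\mu|/2 \rfloor \in \{0,1\}$ according to the parity of $|\mu|$. The only remaining possibility is $\mu_3 \leq \mu_1+\mu_2 \leq \mu_3+1$, in which case \Cref{Wakamiko}\ref{Wakamiko binomial type} applies and $|\mu| \in \{2\mu_3, 2\mu_3+1\}$; here one checks directly that the multiset $\{\mu_1+\mu_2,\, \mu_3\}$ coincides with $\{\lfloor |\mu|/2\rfloor,\, \lceil |\mu|/2 \rceil\}$, so the same formula holds. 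There is no genuine obstacle in this proof — the corollary is essentially a repackaging of \Cref{Wakamiko}, and the only care needed is verifying that the Wakamiko subcase boundary $\mu_1+\mu_2 \leq \mu_3+1$ aligns cleanly with the balanced/unbalanced dichotomy in both directions.
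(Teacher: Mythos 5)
Your proof is correct and is exactly the routine verification the paper implicitly invokes by saying the corollary follows from \Cref{Wakamiko}. The only non-automatic point — that the balanced subcase $\mu_3 \le \mu_1+\mu_2 \le \mu_3+1$ lands in Wakamiko's case \ref{Wakamiko binomial type} yet still produces $\left(\left\lfloor |\mu|/2\right\rfloor, \left\lceil |\mu|/2\right\rceil\right)$ — is exactly the thing one has to check, and you check it.
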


In the case of positive characteristic, the behavior of the exponents is different from the case of characteristic zero (see \Cref{tab:intro exponents table} for example). 
\begin{table}[t]
    \centering
    \caption{$\DELTA(\mu)$ for $\mu = (\mu_1,\mu_2,16)$ when $p=3$}
    \includegraphics[width=\linewidth]{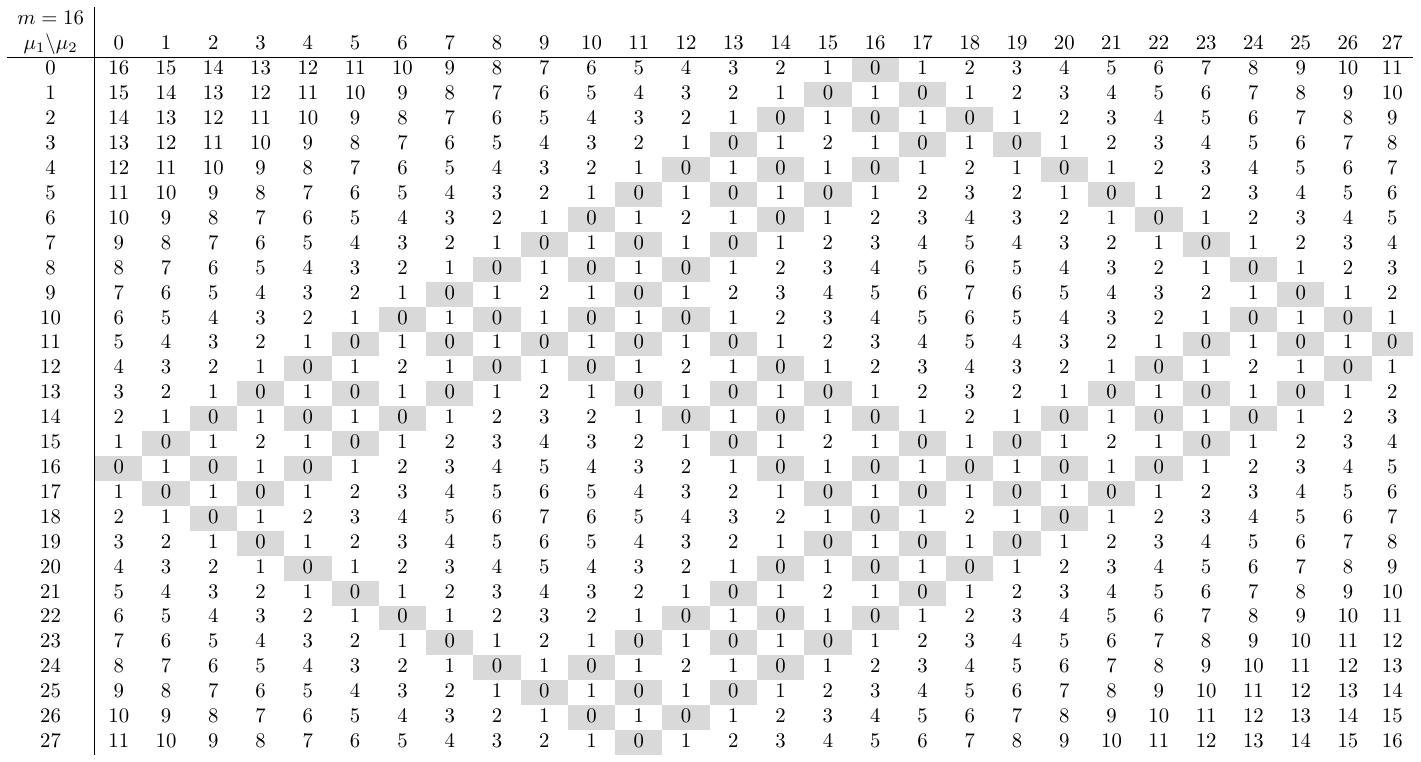}
    \label{tab:intro exponents table}
\end{table}
When $\Char{\mathbb{F}} = 0$, we can take a basis $\{\theta_\mu,\theta_\mu'\}$ for $D(\Amu)$ with integer coefficients by Theorem \ref{Wakamiko}. 
As shown below, the reduction of $\{\theta_\mu,\theta_\mu'\}$ modulo $p$ may not be a basis for positive characteristic. 

\begin{example} \label{eg334}
    Suppose that $\mu = (\mu_1, \mu_2, \mu_3) = (3,3,4)$.
    When $\Char{\mathbb{F}} = 0$, 
    \begin{align}
	   \theta_\mu &\coloneqq (x^5+4x^4y + 6x^3y^2) \partial_x + (4x^2y^3 + xy^4) \partial_y,\\
       \theta'_\mu &\coloneqq (x^5 + 5x^4y + 10x^3y^2) \partial_x + (10x^2y^3 + 5xy^4 + y^5) \partial_y
    \end{align}
    form a basis for $D(\Amu)$ (See \cite[Example 4.1]{W}) since 
    \begin{align}
	   \det{M(\theta_\mu,\theta'_\mu)} = 6x^3y^3(x+y)^4.
    \end{align}
    Therefore one can see that $\exp(\Amu) = (5,5)$ when $\Char{\mathbb{F}} \neq 2,3$. 
    However, when $\Char{\mathbb{F}} = 2$ or $3$, $\{\theta_\mu,\theta'_\mu\}$ is not a basis for $D(\Amu)$ since $\det{M(\theta_\mu,\theta_\mu')} = 0$.
    In fact, we will see later (\Cref{eg:334_ch23}) that $\exp(\Amu) = (4,6)$ when $\Char{\mathbb{F}} = 2$ or $3$.
\end{example}

The main purpose of this paper is to study $\exp(\Amu)$ for the case $\Char{\mathbb{F}} = p > 0$. 
The organization of this article is as follows.

One of the most significant differences between fields of positive characteristic and characteristic zero is the existence of the Frobenius endomorphism in positive characteristic.
In \Cref{section:Preliminaries}, we first prove that the Frobenius endomorphism preserves a basis for the module of logarithmic vector fields (\Cref{Period1}), followed by a review of the theory of multiplicity lattice of $2$-multiarrangements. 

If we remove the multiplicities $\mu$ such that $\DELTA(\mu)=0$ from the multiplicity lattice (the set of multiplicities), then the remaining part decomposes into some connected components. 
In order to determine the exponents, it suffices to know that the centers of the connected components (see \Cref{tab:intro exponents table} for example). 
In \Cref{section3}, we characterize the centers of the connected components (\Cref{thmcc}) and determine all centers (\Cref{Z(1)} and \Cref{c210}). 
Using these results, we provide an effective algorithm for computing the exponents (\Cref{main thm:expnonents}). 

As shown in \Cref{tab:intro exponents table}, the multiplicity lattice has a lot of symmetries. 
In \Cref{section:plane}, we prove the periodicity (\Cref{Period2}) and symmetry (\Cref{veebasis}) by constructing correspondences between bases.

The bases constructed by Wakamiko in \Cref{Wakamiko} \ref{Wakamiko binomial type} are valid for positive characteristic. 
Since binomial coefficients may vanish in positive characteristic, the range in which the bases are valid becomes broader than the case of characteristic zero. 
In \Cref{section:gammam}, we explicitly determine this range (\Cref{main thm:gammam}).

\section{Preliminaries}\label{section:Preliminaries}

\subsection{Action of the Frobenius endomorphism}
Let $q = p^{d}$ be a prime power. 
In this subsection, we assume that $\mathcal{A}$ is an $\ell$-arrangement over the finite field $\mathbb{F}_{q}$. 
Let $\phi_{q} \colon S \longrightarrow S$ denote the $q$-th power Frobenius endomorphism. 
Note that 
\begin{align}
\phi_{q}(f(x_1, \dots, x_\ell)) = f(x_1, \dots, x_\ell)^{q} = f(x_1^q, \dots, x_\ell^q)
\end{align}
for any $f \in S$. 

The map $\phi_{q}$ is injective and induces the endomorphism on $\Der_S$ defined by 
\begin{align}
\phi_{q}\left( \sum_{i=1}^{\ell} f_{i} \partial_{x_i}\right) \coloneqq \sum_{i=1}^{\ell}\phi_{q}(f_{i})\partial_{x_i}. 
\end{align}
Then we have $(\phi_{q}(\theta))(\alpha) = \phi_{q}(\theta(\alpha))$ for any $\theta = \sum_{i=1}^{\ell}f_{i}\partial_{x_{i}} \in \Der_{S}$ and a linear form $\alpha = \sum_{i=1}^{\ell}c_{i}x_{i} \ (c_{i} \in \mathbb{F}_{q})$ since 
\begin{align}
\phi_{q}(\theta(\alpha)) &= \left(\sum_{i=1}^{\ell}f_{i}\partial_{x_{i}}\sum_{i=1}^{\ell}c_{i}x_{i}\right)^{q}
= \left(\sum_{i=1}^{\ell}c_{i}f_{i}\right)^{q} \\
&= \sum_{i=1}^{\ell}c_{i}f_{i}^{q}
=\sum_{i=1}^{\ell}f_{i}^{q}\partial_{x_{i}}\sum_{i=1}^{\ell}c_{i}x_{i}
=(\phi_{q}(\theta))(\alpha). 
\end{align}

For a multiplicity $\mu$ on $\mathcal{A}$, let $q\mu$ denote the multiplicity defined by $(q\mu)(H) = q\cdot \mu(H)$ for any $H \in \A$. 
The Frobenius endomorphism raises the contact order of vector fields
to each hyperplane. 

\begin{lemma}\label{Frob raises contact order}
Let $\mu$ be a multiplicity on $\mathcal{A}$. 
Then 
\begin{align}
\phi_{q}(D(\mathcal{A},\mu)) = D(\mathcal{A},q\mu) \cap \operatorname{Im}\phi_{q}. 
\end{align}
\end{lemma}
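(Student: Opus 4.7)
The plan is to prove both inclusions directly, using the key identity $(\phi_{q}(\theta))(\alpha_{H}) = \phi_{q}(\theta(\alpha_{H})) = \theta(\alpha_{H})^{q}$, which has already been verified in the paragraph preceding the lemma (this is precisely where the assumption $\alpha_{H} \in V^{\ast}$ has coefficients in $\mathbb{F}_{q}$ enters, via $c_{i}^{q} = c_{i}$).

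For the inclusion $\phi_{q}(D(\mathcal{A},\mu)) \subseteq D(\mathcal{A}, q\mu) \cap \operatorname{Im}\phi_{q}$, I would take $\theta \in D(\mathcal{A},\mu)$, so that $\theta(\alpha_{H}) = \alpha_{H}^{\mu(H)} g_{H}$ for some $g_{H} \in S$. Applying the Frobenius and using the identity above gives
\begin{align}
(\phi_{q}(\theta))(\alpha_{H}) = \phi_{q}(\theta(\alpha_{H})) = \theta(\alpha_{H})^{q} = \alpha_{H}^{q\mu(H)} g_{H}^{q} \in \alpha_{H}^{q\mu(H)} S,
\end{align}
so $\phi_{q}(\theta) \in D(\mathcal{A}, q\mu)$, and membership in $\operatorname{Im}\phi_{q}$ is automatic.

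For the reverse inclusion, I would pick $\theta' \in D(\mathcal{A}, q\mu) \cap \operatorname{Im}\phi_{q}$ and choose any preimage $\theta \in \Der_{S}$ with $\phi_{q}(\theta) = \theta'$. Then for each $H \in \mathcal{A}$,
\begin{align}
\theta(\alpha_{H})^{q} = \phi_{q}(\theta(\alpha_{H})) = (\phi_{q}(\theta))(\alpha_{H}) = \theta'(\alpha_{H}) \in \alpha_{H}^{q\mu(H)} S.
\end{align}
Since $\alpha_{H}$ is a prime element of the UFD $S$, the exact power of $\alpha_{H}$ dividing $\theta(\alpha_{H})^{q}$ is $q$ times the exact power dividing $\theta(\alpha_{H})$; hence $\alpha_{H}^{q\mu(H)} \mid \theta(\alpha_{H})^{q}$ forces $\alpha_{H}^{\mu(H)} \mid \theta(\alpha_{H})$. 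Therefore $\theta \in D(\mathcal{A},\mu)$ and $\theta' = \phi_{q}(\theta) \in \phi_{q}(D(\mathcal{A},\mu))$.

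There is no real obstacle here: both directions reduce to the prime-factorization step of taking a $q$-th root of the divisibility relation in a UFD. The only point worth stating explicitly in the writeup is the use of the fact that linear forms $\alpha_{H}$ are defined over $\mathbb{F}_{q}$, so that the Frobenius commutes with evaluation on $\alpha_{H}$ as established before the lemma.
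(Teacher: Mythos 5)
Your proposal is correct and follows essentially the same route as the paper: the forward inclusion is computed identically, and the reverse inclusion in both cases comes down to extracting a $q$-th root of the divisibility relation $\alpha_H^{q\mu(H)} \mid \theta(\alpha_H)^q$ using that $\alpha_H$ is prime in the UFD $S$. The paper phrases this last step as ``therefore there exists $g\in S$ with $f=g^q$'' followed by injectivity of $\phi_q$, whereas you argue directly via $\alpha_H$-adic valuation; these are the same argument, and your formulation is if anything slightly more transparent about where the UFD structure enters.
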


\begin{proof}
First, we suppose $\theta \in D(\A,\mu)$.
Then for any $H \in \mathcal{A}$, there exists $f \in S$ such that $\theta(\alpha_{H}) = \alpha_{H}^{\mu(H)}f$. 
Hence
\begin{align}
\phi_{q}(\theta)(\alpha_{H})
=\phi_{q}(\theta(\alpha_H)) 
= \phi_{q}(\alpha_{H}^{\mu(H)}f)
= \alpha_{H}^{q\mu(H)}f^{q}
\in \alpha_{H}^{q\mu(H)}S. 
\end{align}
Therefore $\phi_{q}(\theta)\in D(\mathcal{A},q\mu)$ and $\phi_{q}(D(\mathcal{A},\mu)) \subseteq D(\mathcal{A},q\mu) \cap \operatorname{Im}\phi_{q}$. 

Next, suppose that $\theta \in \Der_{S}$ and $\phi_{q}(\theta) \in D(\mathcal{A},q\mu)$. 
Then for any $H \in \mathcal{A}$, there exists $f \in S$ such that $\phi_{q}(\theta)(\alpha_{H}) = \alpha_{H}^{q\mu(H)}f$. 
Furthermore, $\phi_{q}(\theta)(\alpha_{H}) = \phi_{q}(\theta(\alpha_{H})) = (\theta(\alpha_{H}))^{q}$. 
Hence $(\theta(\alpha_{H}))^{q} = \alpha_{H}^{q\mu(H)}f$. 
Therefore there exists $g \in S$ such that $f = g^{q}$. 
Since the map $\phi_{q} \colon S \longrightarrow S$ is injective, we have $\theta(\alpha_{H}) = \alpha_{H}^{\mu(H)}g \in \alpha_{H}^{\mu(H)}S$. 
Thus $\theta \in D(\mathcal{A},\mu)$ and $\phi_q(D(\mathcal{A},\mu)) \supseteq D(\mathcal{A},q\mu) \cap \operatorname{Im}\phi_{q}$. 
\end{proof}

In \Cref{Frob raises contact order}, if $D(\mathcal{A},\mu)$ is free, then we can show that $D(\mathcal{A},q\mu)$ is also free. %\RED{(?)}. 
In fact, the Frobenius endomorphism $\phi_{q}$ preserves a basis as follows.

\begin{theorem}\label{Period1}
Let $\mu$ be a multiplicity on $\mathcal{A}$ and $\theta_{1}, \dots, \theta_{\ell} \in \Der_{S}$. 
    Then the following are equivalent:
    \begin{enumerate}[label=(\arabic*)]
        \item $\{\theta_1,\ldots,\theta_\ell\}$ is a basis for $D(\Amu)$; 
        \item $\{\phi_q(\theta_1),\ldots, \phi_q(\theta_\ell)\}$ is a basis for $D(\A,q\mu)$. 
    \end{enumerate}
\end{theorem}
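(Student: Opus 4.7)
The plan is to prove both implications by applying Saito's criterion (\Cref{Saito-Ziegler}) and the fact that the Frobenius endomorphism is a ring homomorphism, so it commutes with forming the determinant of $M(\theta_1,\dots,\theta_\ell)$. The membership half of Saito's criterion will be handled by \Cref{Frob raises contact order}, while the determinant half reduces to a calculation in the polynomial ring.

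First I would record the two key identities that drive everything. Since $\phi_q$ is a ring endomorphism of $S$ and the determinant is a polynomial expression in the matrix entries, for any $\theta_1,\dots,\theta_\ell \in \Der_S$ we have
\begin{align}
\det M(\phi_q(\theta_1),\dots,\phi_q(\theta_\ell)) = \phi_q\bigl(\det M(\theta_1,\dots,\theta_\ell)\bigr) = \bigl(\det M(\theta_1,\dots,\theta_\ell)\bigr)^{q}.
\end{align}
Similarly, since $\alpha_H$ has coefficients in $\mathbb{F}_q$, $\phi_q(\alpha_H) = \alpha_H^q$, so
\begin{align}
Q(\A,q\mu) = \prod_{H\in\A}\alpha_H^{q\mu(H)} = \phi_q(Q(\A,\mu)) = Q(\A,\mu)^q.
\end{align}

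For the direction (1)$\Rightarrow$(2), assume $\{\theta_1,\dots,\theta_\ell\}$ is a basis for $D(\A,\mu)$. By \Cref{Frob raises contact order}, each $\phi_q(\theta_i)$ lies in $D(\A,q\mu)$. By Saito's criterion there exists $c \in \mathbb{F}_q^\times$ with $\det M(\theta_1,\dots,\theta_\ell) = c\, Q(\A,\mu)$, and raising to the $q$-th power and using the two identities above gives $\det M(\phi_q(\theta_1),\dots,\phi_q(\theta_\ell)) = c^q Q(\A,q\mu)$. Since $c^q \in \mathbb{F}_q^\times$, Saito's criterion yields that $\{\phi_q(\theta_1),\dots,\phi_q(\theta_\ell)\}$ is a basis for $D(\A,q\mu)$.

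For the converse (2)$\Rightarrow$(1), assume $\{\phi_q(\theta_1),\dots,\phi_q(\theta_\ell)\}$ is a basis for $D(\A,q\mu)$. Then in particular each $\phi_q(\theta_i) \in D(\A,q\mu) \cap \Ima\phi_q = \phi_q(D(\A,\mu))$ by \Cref{Frob raises contact order}, so using injectivity of $\phi_q$ on $\Der_S$ we obtain $\theta_i \in D(\A,\mu)$. Saito's criterion applied to the $\phi_q(\theta_i)$ gives $c' \in \mathbb{F}_q^\times$ with $\det M(\phi_q(\theta_1),\dots,\phi_q(\theta_\ell)) = c'\, Q(\A,q\mu)$, which after substituting the two identities becomes $(\det M(\theta_1,\dots,\theta_\ell))^q = c'\, Q(\A,\mu)^q$. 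The mild subtlety here---the main thing to be careful about---is to invert the $q$-th power cleanly: since $\mathbb{F}_q$ is perfect (in fact every element of $\mathbb{F}_q$ is a $q$-th power, as $a = a^q$), we may write $c' = (c'')^q$ for some $c'' \in \mathbb{F}_q^\times$, giving $(\det M(\theta_1,\dots,\theta_\ell))^q = (c''\, Q(\A,\mu))^q$; because $S$ is a domain of characteristic $p$, the map $f\mapsto f^q$ is injective, so $\det M(\theta_1,\dots,\theta_\ell) = c''\, Q(\A,\mu)$. Saito's criterion then concludes that $\{\theta_1,\dots,\theta_\ell\}$ is a basis for $D(\A,\mu)$.
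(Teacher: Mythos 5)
Your proof is correct and takes essentially the same approach as the paper: both use \Cref{Frob raises contact order} for membership and the identity $\det M(\phi_q(\theta_1),\ldots,\phi_q(\theta_\ell)) = \phi_q(\det M(\theta_1,\ldots,\theta_\ell))$ together with injectivity of $\phi_q$ to pass the Saito-criterion condition back and forth. The one small simplification you could make in the converse direction is that $a^q = a$ for every $a \in \mathbb{F}_q$, so you may take $c'' = c'$ directly rather than invoking perfectness to extract a $q$-th root.
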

\begin{proof}
    Since 
    $
    \det M\bigl(\phi_q(\theta_1),\ldots,\phi_q(\theta_\ell)\bigr) = \phi_q\bigl( \det M(\theta_1,\ldots,\theta_\ell)\bigr)
    $
    and $\phi_q$ is injective, we see that $\det M(\theta_1,\ldots,\theta_\ell) \doteq Q(\A,\mu)$ if and only if $\det M\bigl(\phi_q(\theta_1),\ldots,\phi_q(\theta_\ell)\bigr) \doteq Q(\A,q\mu)$.
    By \Cref{Frob raises contact order} and Saito's criterion (\Cref{Saito-Ziegler}), we can conclude that the assertion is true. 
\end{proof}

\subsection{The multiplicity lattice}

Suppose that $\mathcal{A} = \{H_{1}, \dots, H_{n} \}$ is a $2$-arrangement over an arbitrary field $\mathbb{F}$. 
Given a multiplicity $\mu : \mathcal{A} \to \mathbb{Z}_{\geq 0}$, let $\mu_{i} \coloneqq \mu(H_{i})$ for each $i \in \{1, \dots, n\}$. 
We identify the multiplicity $\mu$ and the element $(\mu_{1}, \dots, \mu_{n})$ in $\LAMBDA \coloneqq (\mathbb{Z}_{\geq 0 })^{n}$ and call $\LAMBDA$ the \textbf{multiplicity lattice}. 
Recall $\mu$ is said to be \textbf{balanced} if $|\mu|/2 \geq \mu_{i}$ for each $i \in \{1, \dots, n\}$, where $|z| = |z_{1}| + \dots + |z_{n}|$ for $z=(z_1,\dots,z_n) \in \mathbb{Z}^{n}$. 

Let $\mu \in \LAMBDA$. 
Since the 2-multiarrangement $(\mathcal{A},\mu)$ is free, we can take a homogeneous basis $\{\theta_\mu,\theta_\mu'\}$ of $D(\Amu)$ such that $\deg{\theta_\mu} \leq \deg{\theta_\mu'}$.
We call $\theta_\mu$ the \textbf{lower degree basis}.
Note that the lower degree basis is uniquely determined up to scalar multiplication if $\deg{\theta_\mu} < \deg{\theta_\mu'}$. 

Define the function $\DELTA : \LAMBDA \longrightarrow \mathbb{Z}_{\geq 0}$ by 
\begin{align}
\DELTA(\mu) = \deg{\theta_\mu'} - \deg{\theta_\mu}. 
\end{align}
Since $\deg{\theta_\mu'} + \deg{\theta_\mu} = |\mu|$, if we know $\DELTA(\mu)$, then we can obtain the exponents $\exp(\mathcal{A},\mu) = (\deg{\theta_\mu},\, \deg{\theta_\mu'})$, and vice versa. 

Two elements $\mu,\nu \in \LAMBDA$ are said to be \textbf{adjacent} if $\mu-\nu = \pm \bm{e}_{i}$ for some $i \in \{1, \dots, n\}$, where $\bm{e}_{i}$ denotes the $i$-th standard unit vector.

\begin{lemma}[Abe--Numata {\cite[Lemma 4.2 and 4.3]{AN}}]\label{AN1}
    Let $\mu,\nu \in \LAMBDA$ be adjacent. 
    Then the following statements hold:
    \begin{enumerate}[label=(\arabic*)]
        \item $|\DELTA(\mu) - \DELTA(\nu)| = 1$.
        \item If $\nu(H) = \mu(H) + 1$ for $H \in \A$, then
        \begin{align}
            \theta_\nu = \begin{cases*}
                \alpha_H\theta_\mu & if \ $\DELTA(\nu) < \DELTA(\mu)$;\\
                \theta_\mu & if \ $\DELTA(\nu) > \DELTA(\mu)$.
            \end{cases*}
        \end{align}
    \end{enumerate}
\end{lemma}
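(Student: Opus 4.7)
The plan is to prove (1) and (2) simultaneously by analysing how $D(\A,\nu)$ sits inside $D(\A,\mu)$. The key initial observation is the sandwich of inclusions
\[
\alpha_H D(\A,\mu) \;\subseteq\; D(\A,\nu) \;\subseteq\; D(\A,\mu),
\]
both of which are immediate from the definition of the modules of logarithmic vector fields, since $\nu(H)=\mu(H)+1$ and $\nu(H')=\mu(H')$ for $H'\neq H$. Applying Saito's criterion (\Cref{Saito-Ziegler}) separately to $(\A,\mu)$ and $(\A,\nu)$, bases of these modules have degree sums $|\mu|$ and $|\nu|=|\mu|+1$ respectively.

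I would then perform a case split according to whether a suitably chosen lower degree basis element $\theta_\mu\in D(\A,\mu)$ already belongs to $D(\A,\nu)$. When $\DELTA(\mu)>0$ the element $\theta_\mu$ is unique up to scalar, so this dichotomy is well-posed. When $\DELTA(\mu)=0$, the $\mathbb{F}$-subspace $W\subseteq D(\A,\mu)$ of degree-$|\mu|/2$ elements is $2$-dimensional; a dimension count using the degree sum $|\nu|=|\mu|+1$ shows that $W\cap D(\A,\nu)\neq 0$, for otherwise every nonzero element of $D(\A,\nu)$ would have degree at least $|\mu|/2+1$, forcing $|\nu|\geq|\mu|+2$, a contradiction. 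Hence in this degenerate case one can always choose $\theta_\mu\in D(\A,\nu)$, placing us in the first case below.

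In the first case $\theta_\mu\in D(\A,\nu)$: since $\theta_\mu$ realises the minimum degree in $D(\A,\mu)\supseteq D(\A,\nu)$, it also realises it in $D(\A,\nu)$, so we may take $\theta_\nu\doteq\theta_\mu$; the degree sum constraint then forces $\deg \theta_\nu'=\deg\theta_\mu'+1$, giving $\DELTA(\nu)=\DELTA(\mu)+1$, the $\DELTA(\nu)>\DELTA(\mu)$ branch of~(2). In the second case $\theta_\mu\notin D(\A,\nu)$ we necessarily have $\DELTA(\mu)>0$, so $\theta_\mu$ is unique up to scalar; the element $\alpha_H\theta_\mu\in\alpha_H D(\A,\mu)\subseteq D(\A,\nu)$ has degree $\deg\theta_\mu+1$, and any element of $D(\A,\nu)\subseteq D(\A,\mu)$ of degree at most $\deg\theta_\mu$ must be a scalar multiple of $\theta_\mu$ by this uniqueness, hence zero by the case hypothesis. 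Thus $\alpha_H\theta_\mu$ realises the minimum degree in $D(\A,\nu)$ and we take $\theta_\nu\doteq\alpha_H\theta_\mu$, giving $\DELTA(\nu)=\DELTA(\mu)-1$. In both cases $|\DELTA(\mu)-\DELTA(\nu)|=1$, which establishes~(1). The main obstacle I anticipate is bookkeeping the edge case $\DELTA(\mu)=0$, where the lower degree basis element is not unique and a preliminary choice must be made before the dichotomy applies.
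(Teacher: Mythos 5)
The paper does not prove this lemma; it is cited directly from Abe--Numata \cite[Lemma 4.2 and 4.3]{AN}. Your proof is correct and follows the same natural approach used in that reference: the sandwich of inclusions $\alpha_H D(\A,\mu) \subseteq D(\A,\nu) \subseteq D(\A,\mu)$, a dichotomy on whether $\theta_\mu$ survives into $D(\A,\nu)$, and the degree-sum constraint from Saito's criterion. Your treatment of the degenerate case $\DELTA(\mu)=0$ — arguing via a dimension count that the two-dimensional space $W$ of minimal-degree elements of $D(\A,\mu)$ must meet $D(\A,\nu)$ nontrivially, so a compatible choice of $\theta_\mu$ exists — is the one subtle point, and you handle it correctly; this is indeed the main bookkeeping obstacle, as you anticipated.
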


Define the subsets $\LAMBDA^\bal, \LAMBDA_{>0}$, and $\LAMBDA_{>0}^\bal$ of $\LAMBDA$ by
\begin{align}
\LAMBDA^\bal \coloneqq \Set{\mu \in \LAMBDA | \mu \text{ is balanced}}, \quad
\LAMBDA_{>0} \coloneqq \Set{\mu \in \LAMBDA | \DELTA(\mu) > 0}, \quad
\LAMBDA_{>0}^\bal \coloneqq \LAMBDA^\bal \cap \LAMBDA_{>0}.
\end{align}
The adjacentness induces an equivalence relation on $\LAMBDA_{>0}$ and 
every equivalence class is called a \textbf{connected component} of $\LAMBDA_{>0}$. 

\begin{theorem}[Abe--Numata {\cite[Theorem 3.1 and Lemma 4.5]{AN}}]\label{CCThm:unbalanced}
The following statememts hold: 
\begin{enumerate}[label=(\arabic*)]
\item Let $C_{i} \coloneqq \Set{\mu \in \LAMBDA | \mu_{i} > \frac{1}{2}|\mu|}$ for each $H_{i} \in \mathcal{A}$. 
Then $C_{i}$ is a connected component of $\LAMBDA_{>0}$. 
\item $\LAMBDA_{>0}\setminus\LAMBDA_{>0}^{\bal} = \bigcup_{i=1}^{n}C_i$. 
\item Suppose that both $\mu$ and $\mu+\boldsymbol{e}_{i}$ belong to $C_i$. 
Then $\DELTA(\mu+\boldsymbol{e}_{i}) = \DELTA(\mu)+1$. 
In particular, $\DELTA(\mu) = 2\mu_i - |\mu|$ for any $\mu \in C_{i}$.
\end{enumerate}
\end{theorem}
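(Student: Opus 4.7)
The plan is to extract all three assertions from one explicit construction of the lower-degree basis element on $C_i$. First, part~(2) is essentially bookkeeping: $\mu$ is unbalanced iff some $\mu_i > |\mu|/2$, which is exactly $\mu \in \bigcup_i C_i$, and the union is automatically disjoint since $\mu_i, \mu_j > |\mu|/2$ would force $\mu_i + \mu_j > |\mu|$. The only analytic content of (2) is the inclusion $C_i \subseteq \LAMBDA_{>0}$, and this will come for free from the analysis of $\DELTA$ below.

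The heart of the argument is a direct computation of $\DELTA$ on $C_i$. Fix $\mu \in C_i$ and pick a constant-coefficient direction $\delta_i \in \Der_S$ along $H_i$, so that $\delta_i(\alpha_{H_i}) = 0$ while $\delta_i(\alpha_{H_k})$ is a nonzero scalar for every $k \neq i$. A one-line check then gives
\[
\theta \coloneqq \prod_{k \neq i} \alpha_{H_k}^{\mu_k} \cdot \delta_i \in D(\A, \mu), \qquad \deg \theta = |\mu| - \mu_i.
\]
Since $|\mu| - \mu_i < |\mu|/2$ on $C_i$, this yields $\deg \theta_\mu \leq |\mu|-\mu_i$, hence $\DELTA(\mu) > 0$, proving $C_i \subseteq \LAMBDA_{>0}$. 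For the matching lower bound I would take any nonzero homogeneous $\theta' \in D(\A,\mu)$ of degree $d$ and split into two cases. If $\theta'(\alpha_{H_k}) \neq 0$ for every $k$, then $d+1 \geq \mu_k$ for each $k$, so $d \geq \mu_i - 1 \geq |\mu|-\mu_i$. Otherwise $\theta'(\alpha_{H_j}) = 0$ for some $j$, which forces $\theta' = g \delta_j$ with $g$ divisible by $\prod_{k \neq j} \alpha_{H_k}^{\mu_k}$, giving $d \geq |\mu|-\mu_j \geq |\mu|-\mu_i$, using that $\mu_i$ is the strict maximum on $C_i$ (anything larger would put $\mu$ into another $C_j$, contradicting disjointness). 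Combining these yields $\deg \theta_\mu = |\mu|-\mu_i$, hence $\DELTA(\mu) = 2\mu_i - |\mu|$, which is the ``in particular'' half of part~(3); the recurrence $\DELTA(\mu+\bm{e}_i) = \DELTA(\mu)+1$ follows by substitution.

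With $\DELTA$ known on $C_i$, part~(1) splits into connectedness and an edge-boundary check. For connectedness, removing one unit from any coordinate $j \neq i$ preserves $2\mu_i > |\mu|$, so every $\mu \in C_i$ reduces inside $C_i$ to the axis point $\mu_i \bm{e}_i$, and two axis points are joined by increments along $\bm{e}_i$. For the boundary, suppose $\mu \in C_i$ is adjacent to some $\nu \in \LAMBDA_{>0}$ with $\nu \notin C_i$. A short case analysis using $\mu_i > \mu_k$ for $k \neq i$ rules out $\nu$ lying in another $C_j$, so $\nu$ must be balanced; this forces $\mu$ onto the boundary $2\mu_i = |\mu|+1$, where $\DELTA(\mu) = 1$, and $\nu$ is either $\mu + \bm{e}_j$ for some $j \neq i$ or $\mu - \bm{e}_i$. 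In both subcases, the explicit form $\theta_\mu \doteq \prod_{k\neq i}\alpha_{H_k}^{\mu_k}\delta_i$ is incompatible with the transport identities of \Cref{AN1}: $\theta_\mu$ is not divisible by $\alpha_{H_i}$, and $\theta_\mu(\alpha_{H_j})$ carries no extra $\alpha_{H_j}$ factor beyond $\alpha_{H_j}^{\mu_j}$. Each excludes the option $\DELTA(\nu) = 2$ in \Cref{AN1}, forcing $\DELTA(\nu) = 0$ and contradicting $\nu \in \LAMBDA_{>0}$.

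The main obstacle I anticipate is justifying the uniqueness underlying ``the'' explicit form of $\theta_\mu$ at the boundary. When $\DELTA(\mu) = 1$, the degree-$(|\mu|-\mu_i)$ piece of $D(\A,\mu)$ is one-dimensional, so the construction above is forced up to scalar, and the \Cref{AN1} identities then yield unambiguous contradictions. I would record this uniqueness explicitly before the boundary argument and verify the divisibility statements by using that the linear forms $\alpha_H$ are pairwise non-proportional.
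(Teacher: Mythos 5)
The paper does not prove this theorem; it imports it wholesale from Abe--Numata \cite[Theorem 3.1 and Lemma 4.5]{AN}, so there is no internal proof to compare against. Your argument is a self-contained reconstruction, and having checked it line by line I believe it is correct. The key ideas --- producing the explicit low-degree derivation $\prod_{k\neq i}\alpha_{H_k}^{\mu_k}\,\delta_i$, showing it is the lower degree basis by a two-case lower bound on $\deg\theta'$, reading off $\DELTA(\mu)=2\mu_i-|\mu|$ and hence $C_i\subseteq\LAMBDA_{>0}$ and the recurrence, retracting along the $\bm{e}_i$-axis for connectedness, and closing off $C_i$ by showing that any adjacent $\nu\notin C_i$ forces $\DELTA(\mu)=1$ and then $\DELTA(\nu)=0$ via the structure of $\theta_\mu$ together with \Cref{AN1} --- all go through, and they use exactly the ambient $\ell=2$ facts that make the statement true (rank-one annihilator of a linear form in $\Der_S$, and $\deg\theta_\mu+\deg\theta_\mu'=|\mu|$). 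Two small remarks. First, in Case~1 of the degree lower bound you write ``$d+1\geq\mu_k$''; since a nonzero homogeneous $\theta'$ of degree $d$ gives $\theta'(\alpha_{H_k})$ homogeneous of degree $d$, the correct (and stronger) inequality is $d\geq\mu_k$, though $\mu_i-1\geq|\mu|-\mu_i$ does hold on $C_i$ so the weaker bound still suffices --- worth fixing the slip so a reader does not stumble on it. Second, you invoke one-dimensionality of the degree-$(|\mu|-\mu_i)$ graded piece at the boundary; it is cleaner to cite the fact, already in the paper, that $\theta_\mu$ is unique up to scalar whenever $\DELTA(\mu)>0$, which you have already established on all of $C_i$.
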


For $\zeta \in \LAMBDA$ and $r \in \mathbb{Z}_{>0}$, define the \textbf{ball} $B(\zeta,r)$ with radius $r$ and center $\zeta$ by
\begin{align}
    B(\zeta,r) \coloneqq \Set{\mu \in \LAMBDA |{|\mu-\zeta| < r}}.
\end{align}

\begin{theorem}[Abe--Numata {\cite[Theorem 3.1, Theorem 3.2]{AN}}]\label{CCThm}
The following statements hold: 
\begin{enumerate}[label=(\arabic*)]
\item $\LAMBDA_{>0}^{\bal} = \bigcup_{C}C$, where $C$ runs over the all finite connected components of $\LAMBDA_{>0}$. 
\item Let $C$ be a connected component of $\LAMBDA^\bal_{>0}$.
Then there exists $\zeta \in C$ such that $C = B(\zeta,\DELTA(\zeta))$.
Moreover, for $\mu \in C$,
\begin{align}
    \DELTA(\mu) = \DELTA(\zeta) - |\mu-\zeta|.
\end{align}
In particular, the center of $C$ is characterized as a unique element $\zeta \in C$ such that $\DELTA(\zeta) > \DELTA(\mu)$ for any $\mu$ adjacent to $\zeta$. 
\end{enumerate}
\end{theorem}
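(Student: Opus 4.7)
The strategy is to establish part~(2) first; part~(1) will then follow as an easy corollary. The central tool throughout is \Cref{AN1}: $\DELTA$ changes by exactly $\pm 1$ between adjacent multiplicities (hence is $1$-Lipschitz in the $L^{1}$-metric on $\LAMBDA$), and the lower-degree basis $\theta_{\mu}$ transforms under an adjacency move by multiplication by $\alpha_{H}$ when $\DELTA$ decreases, and is unchanged when $\DELTA$ increases.

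Fix a connected component $C$ of $\LAMBDA_{>0}^{\bal}$. First I would argue that $\DELTA$ attains a maximum on $C$ at some point $\zeta$; this reduces to showing that any strictly $\DELTA$-increasing directed path in $C$ must terminate, which uses the Lipschitz property together with standard exponent bounds for balanced $2$-multiarrangements. Once $\zeta$ is fixed, \Cref{AN1}(1) and maximality force $\DELTA(\mu) = \DELTA(\zeta) - 1$ for every $\LAMBDA$-neighbor $\mu$ of $\zeta$; when $\DELTA(\zeta) \geq 2$ all such $\mu$ lie in $C$.

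The core claim is the ball formula $C = B(\zeta, \DELTA(\zeta))$ together with the identity $\DELTA(\mu) = \DELTA(\zeta) - |\mu - \zeta|$. I would prove this by induction on $k = |\mu - \zeta|$. Given $\mu$ at distance $k$, pick a neighbor $\mu'$ at distance $k - 1$; the inductive hypothesis gives $\DELTA(\mu') = \DELTA(\zeta) - k + 1$, so by \Cref{AN1}(1), $\DELTA(\mu) \in \{\DELTA(\zeta) - k,\, \DELTA(\zeta) - k + 2\}$. The main obstacle is ruling out the larger value, i.e., preventing an upward oscillation of $\DELTA$ as one moves away from the peak. To handle this I would iterate \Cref{AN1}(2) along a monotone coordinate path from $\zeta$ to $\mu$ (reducing to the monotone case by splitting $\mu - \zeta$ into its positive and negative coordinate parts): each $\DELTA$-decreasing step multiplies the lower-degree basis by a linear form $\alpha_{H_{i_{j}}}$, while any $\DELTA$-increasing step leaves it unchanged. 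Comparing the resulting explicit expression for $\theta_{\mu}$ against Saito's criterion (\Cref{Saito-Ziegler}) and using path-independence of $\theta_{\mu}$, any hypothetical increasing step would force a divisibility relation implying that $\theta_{\zeta}$ itself lies in $D(\A, \zeta + \bm{e}_{i})$ for some $i$. This contradicts the strict degree jump $\deg \theta_{\zeta + \bm{e}_{i}} = \deg \theta_{\zeta} + 1$ already established at the peak, so no increasing step occurs and $\DELTA(\mu) = \DELTA(\zeta) - k$.

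Once the ball formula is established, $C = B(\zeta, \DELTA(\zeta))$ is automatically finite and $\zeta$ is uniquely characterized by strictly dominating all its neighbors, completing part~(2). Part~(1) is then immediate: each unbalanced component $C_{i}$ from \Cref{CCThm:unbalanced} is infinite (it contains $m\bm{e}_{i}$ for all $m \geq 1$, with $\DELTA(m\bm{e}_{i}) = m$), while every balanced component fits inside a finite ball by part~(2); hence the union of finite connected components of $\LAMBDA_{>0}$ is precisely $\LAMBDA_{>0}^{\bal}$.
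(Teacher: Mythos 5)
The paper does not prove this theorem; it is cited directly from Abe--Numata \cite[Theorems 3.1, 3.2]{AN} with no proof supplied, so there is no in-paper proof to compare against. Your attempt is therefore a genuine reproof, and its high-level strategy (exploit \Cref{AN1} to track $\theta_\mu$ along paths and derive contradictions from path-independence) is plausible and likely close in spirit to what Abe--Numata do.

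There are, however, two concrete gaps. First, you justify the existence of the maximizer $\zeta$ by appealing to ``standard exponent bounds for balanced $2$-multiarrangements.'' The bound $\DELTA(\mu) \leq \#\A - 2$ is a characteristic-zero theorem of Abe, and this paper explicitly notes that it fails over finite fields, which is exactly the setting in which the theorem is being invoked; so you cannot lean on it. A correct termination argument for strictly $\DELTA$-increasing paths in $\LAMBDA^{\bal}_{>0}$ has to come from \Cref{AN1}(2) itself: along such a path $\deg\theta$ is weakly decreasing (unchanged under a coordinate increase, strictly smaller under a coordinate decrease), hence eventually stabilizes, after which unbounded growth of some $\mu_i$ with $\theta$ fixed forces $\theta(\alpha_{H_i}) = 0$, which gives $\DELTA(\mu) \leq 0$ for balanced $\mu$ and contradicts $\mu \in \LAMBDA_{>0}^{\bal}$. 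Second, the inductive step ruling out upward oscillation is only gestured at. Your target claim, that an increasing step ``forces a divisibility relation implying $\theta_\zeta \in D(\A,\zeta+\bm{e}_i)$,'' is the right thing to aim for, but getting there by comparing two paths requires genuinely different computations for $\mu = \zeta + 2\bm{e}_i$, $\mu = \zeta + \bm{e}_i + \bm{e}_j$, and $\mu = \zeta + \bm{e}_i - \bm{e}_j$ (and their negations), and the general distance-$k$ case with mixed signs compounds these; the sentence as written does not contain the case analysis needed to certify the claim. Both gaps look fillable, but neither is filled in what you wrote.
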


\begin{corollary}[Abe--Numata {\cite[Corollary 3.3]{AN}}] \label{ANC33}
    Let $C$ be a connected component of $\LAMBDA_{>0}^\bal$ and $\zeta$ its center. 
    If $\mu \in \LAMBDA$ satisfies $|\mu - \zeta| < \DELTA(\zeta) + 2$, then 
    \begin{align}
        \DELTA(\mu) = \bigl|\DELTA(\zeta) - |\mu - \zeta|\bigr|.
    \end{align}
\end{corollary}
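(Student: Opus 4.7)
The hypothesis $|\mu - \zeta| < \DELTA(\zeta) + 2$ allows the integer $k \coloneqq |\mu - \zeta|$ to take values from $0$ up to $\DELTA(\zeta) + 1$. For $0 \leq k \leq \DELTA(\zeta) - 1$ the element $\mu$ lies in $B(\zeta,\DELTA(\zeta)) = C$, and \Cref{CCThm} directly yields $\DELTA(\mu) = \DELTA(\zeta) - k = \bigl|\DELTA(\zeta) - k\bigr|$. So the actual content is concentrated in the two boundary cases $k = \DELTA(\zeta)$ and $k = \DELTA(\zeta) + 1$, which I plan to settle by propagating from the interior of $C$ outward via \Cref{AN1}(1).

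For $k = \DELTA(\zeta)$, I would produce a neighbor $\mu'$ of $\mu$ with $|\mu' - \zeta| = k - 1$, obtained by moving $\mu$ one coordinate step toward $\zeta$ along some index where $\mu_i \neq \zeta_i$; this keeps $\mu'$ inside $\LAMBDA$ because $\zeta \in \LAMBDA$ ensures that the decrement or increment stays in $\mathbb{Z}_{\geq 0}$. Then $\mu' \in C$, so \Cref{CCThm} gives $\DELTA(\mu') = 1$, and \Cref{AN1}(1) restricts $\DELTA(\mu) \in \{0,2\}$. If $\DELTA(\mu) = 2$, then $\mu \in \LAMBDA_{>0}$ and, being adjacent to $\mu' \in C$, must lie in the same connected component $C$; but $|\mu - \zeta| = \DELTA(\zeta)$ violates $C = B(\zeta,\DELTA(\zeta))$. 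Hence $\DELTA(\mu) = 0 = \bigl|\DELTA(\zeta) - k\bigr|$.

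For $k = \DELTA(\zeta) + 1$, the same neighbor trick produces $\mu'$ adjacent to $\mu$ with $|\mu' - \zeta| = \DELTA(\zeta)$, so $\DELTA(\mu') = 0$ by the case just handled. Since $\DELTA$ is non-negative, \Cref{AN1}(1) forces $\DELTA(\mu) = 1$, once again matching $\bigl|\DELTA(\zeta) - k\bigr|$.

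The only mildly subtle step is the implicit use, in the $k = \DELTA(\zeta)$ case, of the fact that two adjacent elements both lying in $\LAMBDA_{>0}$ must belong to the same connected component; this is immediate from the very definition of the equivalence relation, but should be spelled out to rule out the possibility that $\mu$ secretly initiates a fresh connected component disjoint from $C$. Everything else is a single-step inductive propagation pushing the formula one layer out from $C$ at a time.
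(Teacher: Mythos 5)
Your proof is correct. Note first that the paper itself does not give a proof of this statement: it is imported verbatim from Abe--Numata \cite[Corollary 3.3]{AN}, so there is no in-paper argument to compare against. What you have done is reconstruct a proof from the surrounding toolkit, and the reconstruction holds up. For $k = |\mu - \zeta| < \DELTA(\zeta)$ the formula is just \Cref{CCThm}, and pushing outward to $k = \DELTA(\zeta)$ and $k = \DELTA(\zeta)+1$ by stepping one coordinate toward $\zeta$ and invoking \Cref{AN1}(1) works exactly as you describe; the non-negativity of $\DELTA$ pins down the value in the last layer.

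One point you flag as ``mildly subtle'' deserves slightly more care than you give it. In the $k = \DELTA(\zeta)$ case you argue that if $\DELTA(\mu) = 2$ then $\mu$, being adjacent to $\mu' \in C$ and lying in $\LAMBDA_{>0}$, must belong to $C$. This is a statement about connected components of $\LAMBDA_{>0}$, while $C$ is introduced as a connected component of $\LAMBDA_{>0}^{\bal}$. The two notions do coincide here, but that identification rests on \Cref{CCThm}(1) together with \Cref{CCThm:unbalanced}(2): the finite connected components of $\LAMBDA_{>0}$ are precisely the connected components of $\LAMBDA_{>0}^{\bal}$, and the infinite ones are the unbalanced cones $C_i$. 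Without noting this, one cannot immediately rule out that $\mu$ is unbalanced and so lies in some $C_i$ rather than in $C$; the resolution is that $\mu$ and $\mu'$ lie in the same connected component of $\LAMBDA_{>0}$ by adjacency, and that component is $C$ because $\mu' \in C$ and $C$ is itself a connected component of $\LAMBDA_{>0}$. With that clarification the argument is complete and, as far as I can tell, a perfectly reasonable replacement for the uncited original proof.
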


\begin{corollary}\label{c3}
    Let $\mu \in \LAMBDA_{>0}$ and $C$ the connected component containing $\mu$.
    Then $B(\mu,\DELTA(\mu)) \subseteq C$.
    In particular, if $\nu \in \LAMBDA_{>0}$ satisfies $|\nu - \mu| < \DELTA(\mu)$, then $\mu$ and $\nu$ belong to the same connected component. 
\end{corollary}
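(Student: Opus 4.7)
The plan is to invoke the classification of connected components: by \Cref{CCThm:unbalanced} and \Cref{CCThm}, the component $C$ is either an unbalanced component $C_i$ for some $H_i \in \A$, or a finite balanced ball $B(\zeta,\DELTA(\zeta))$ around its center $\zeta$. I would handle these two cases separately.

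In the unbalanced case $C = C_i$, part (3) of \Cref{CCThm:unbalanced} gives $\DELTA(\mu) = 2\mu_i - |\mu|$. For any $\nu \in B(\mu,\DELTA(\mu))$, I would set $d_j \coloneqq \nu_j - \mu_j$ and expand
\[
2\nu_i - |\nu| \;=\; (2\mu_i - |\mu|) + d_i - \sum_{j \ne i} d_j \;\geq\; \DELTA(\mu) - \sum_{j=1}^{n}|d_j| \;=\; \DELTA(\mu) - |\nu-\mu| \;>\; 0,
\]
so $\nu \in C_i = C$.

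In the balanced case, \Cref{CCThm}\,(2) asserts that $C = B(\zeta,\DELTA(\zeta))$ and $\DELTA(\mu) = \DELTA(\zeta) - |\mu-\zeta|$. For $\nu \in B(\mu,\DELTA(\mu))$, the triangle inequality for the $\ell^{1}$-norm $|\cdot|$ gives
\[
|\nu - \zeta| \;\leq\; |\nu - \mu| + |\mu - \zeta| \;<\; \DELTA(\mu) + |\mu - \zeta| \;=\; \DELTA(\zeta),
\]
hence $\nu \in B(\zeta,\DELTA(\zeta)) = C$. The \emph{in particular} clause is then immediate: any $\nu \in \LAMBDA_{>0}$ with $|\nu - \mu| < \DELTA(\mu)$ lies in $B(\mu,\DELTA(\mu)) \subseteq C$, so $\mu$ and $\nu$ share the same component $C$.

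Since the statement is essentially a bookkeeping consequence of the Abe--Numata structure theorems already quoted, there is no substantive obstacle; the only care needed is the case split and the short inequality $d_i - \sum_{j\ne i} d_j \geq -|\nu-\mu|$ used to conclude the unbalanced case.
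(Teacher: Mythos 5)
Your proof is correct and mirrors the paper's own argument: both split into the unbalanced case ($C = C_i$, showing $2\nu_i - |\nu| > 0$) and the balanced case ($C = B(\zeta,\DELTA(\zeta))$, using the $\ell^1$-triangle inequality via the center). The only cosmetic difference is that in the unbalanced case you fold the paper's two subcases ($\nu_i \geq \mu_i$ and $\nu_i < \mu_i$) into a single inequality using $d_j = \nu_j - \mu_j$, which is a slightly cleaner presentation of the same estimate.
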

\begin{proof}
    Suppose that $\mu \in \LAMBDA_{>0} \setminus \LAMBDA_{>0}^\bal$.
    Then $C = C_i$ for some $H_i \in \A$ by \Cref{CCThm:unbalanced}.
    Let $\nu \in B(\mu,\DELTA(\mu))$.
    Then $h \coloneqq |\nu - \mu| < 2\mu_i - |\mu|$.
    If $\nu_i = \mu_i + t$ for $0 \leq t \leq h$, then $|\nu| \leq |\mu| + h$.
    Hence
    \begin{align}
        \nu_i - \dfrac{|\nu|}{2} \geq \mu_i + t - \dfrac{|\mu| + h}{2} > t \geq 0.
    \end{align}
    If $\nu_i = \mu_i - t$ for $0 < t \leq h$, then $|\nu| \leq |\mu| + h - 2t$.
    Hence
    \begin{align}
        \nu_i - \dfrac{|\nu|}{2} \geq \mu_i - t - \dfrac{|\mu| + h - 2t}{2} = \mu_i - \dfrac{|\mu|}{2} - \dfrac{h}{2} > 0.
    \end{align}
    Therefore we have $\nu \in C_i$.

    Suppose that $\mu \in \LAMBDA_{>0}^\bal$.
    Then there exists $\zeta \in \LAMBDA_{>0}^\bal$ such that $C = B(\zeta,\DELTA(\zeta))$. 
    Let $\nu \in B(\mu,\DELTA(\mu))$.
    Then
    \begin{align}
        |\nu-\zeta| \leq |\nu - \mu| + |\mu-\zeta| < \DELTA(\mu) + |\mu-\zeta| = \DELTA(\zeta)
    \end{align}
    by \Cref{CCThm}. 
    Hence $\nu \in C$.
\end{proof}

\begin{definition}
Let $Z$ denote the set consisting of the centers of the connected components of $\LAMBDA_{>0}^{\bal}$. 
\end{definition}
By \Cref{CCThm}, if we can determine the centers $\zeta \in Z$ and the radii $\DELTA(\zeta)$, then we can obtain $\DELTA(\mu)$ for any $\mu \in \LAMBDA^\bal_{>0}$.

Abe \cite[Theorem 1.6]{AA} showed that, when $\Char\mathbb{F} = 0$, if $\#\mathcal{A} \geq 3$ and $\mu$ is balanced, then $\DELTA(\mu) \leq \#\mathcal{A}-2$. 
However, when $\mathbb{F}$ is a finite field, $\DELTA(\mu)$ is not bounded because of the Frobenius endomorphism (See \Cref{Period1}). 
We list properties of the multiplicity lattice in the case $\mathbb{F}$ is finite as follows. 

\begin{proposition}\label{self-similarity}
Suppose that $\mathbb{F}$ is a finite field $\mathbb{F}_{q}$. 
Then the following statements hold: 
\begin{enumerate}[label=(\arabic*)]
\item\label{ss1} For any $\mu \in \LAMBDA$, we have $\DELTA(q\mu) = q \cdot \DELTA(\mu)$. 
\item\label{ss2} Let $\mu \in \LAMBDA_{>0}^{\bal}$. 
Then $\mu \in Z$ if and only if $q\mu \in Z$. 
\end{enumerate}
\end{proposition}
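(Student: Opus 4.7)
My plan is to leverage \Cref{Period1} throughout and rephrase each step in terms of the lower degree basis.

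For \ref{ss1}, I would take a homogeneous basis $\{\theta_\mu, \theta'_\mu\}$ of $D(\A,\mu)$ with $\deg\theta_\mu \le \deg\theta'_\mu$. By \Cref{Period1}, $\{\phi_q(\theta_\mu), \phi_q(\theta'_\mu)\}$ is a homogeneous basis of $D(\A, q\mu)$ whose degrees are $q\deg\theta_\mu$ and $q\deg\theta'_\mu$. Reading off the exponents, $\DELTA(q\mu) = q\DELTA(\mu)$ is immediate.

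For \ref{ss2}, I would use the characterization in \Cref{CCThm}\textup{(2)}: $\mu \in Z$ iff $\mu \in \LAMBDA_{>0}^{\bal}$ and $\DELTA(\nu) < \DELTA(\mu)$ for every $\nu \in \LAMBDA$ adjacent to $\mu$. The membership $\mu \in \LAMBDA_{>0}^{\bal}$ is clearly equivalent to $q\mu \in \LAMBDA_{>0}^{\bal}$, since balancedness scales and positivity of $\DELTA$ is preserved by \ref{ss1}. It remains to match the local maximality condition direction by direction. The key observation is that \Cref{AN1}\textup{(2)} lets each neighbor inequality be encoded as a divisibility statement: $\DELTA(\mu + \bm{e}_i) > \DELTA(\mu)$ iff $\theta_\mu \in D(\A, \mu + \bm{e}_i)$, equivalently $\theta_\mu(\alpha_{H_i}) \in \alpha_{H_i}^{\mu_i+1}S$; while $\DELTA(\mu - \bm{e}_i) > \DELTA(\mu)$ iff $\theta_\mu$ is divisible by $\alpha_{H_i}$ as a vector field.

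The Frobenius transports both conditions. Since $\DELTA(q\mu) > 0$, the lower degree basis is unique up to scalar, so $\theta_{q\mu} \doteq \phi_q(\theta_\mu)$. Writing $\theta_\mu(\alpha_{H_i}) = \alpha_{H_i}^{\mu_i}\hat f$, one has $\phi_q(\theta_\mu)(\alpha_{H_i}) = \alpha_{H_i}^{q\mu_i}\hat f^{\,q}$, so $\theta_{q\mu}(\alpha_{H_i}) \in \alpha_{H_i}^{q\mu_i+1}S$ iff $\alpha_{H_i} \mid \hat f^{\,q}$ iff $\alpha_{H_i} \mid \hat f$, using that $\alpha_{H_i}$ is prime in the UFD $S$. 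This matches the $+\bm{e}_i$ conditions. For the $-\bm{e}_i$ direction, the coefficients of $\phi_q(\theta_\mu)$ are the $q$-th powers of those of $\theta_\mu$, so coefficient-wise divisibility of $\phi_q(\theta_\mu)$ by $\alpha_{H_i}$ again reduces to divisibility of $\theta_\mu$ by $\alpha_{H_i}$. Hence each $\pm\bm{e}_i$ neighbor inequality is preserved (in both directions), and $\mu \in Z$ iff $q\mu \in Z$.

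The conceptual obstacle is spotting this reformulation: once one sees that all center inequalities in \Cref{CCThm}\textup{(2)} reduce to divisibility statements involving linear forms, and linear forms are prime in $S = \mathbb{F}_q[x,y]$ so divisibility is invariant under raising coefficients to the $q$-th power, both parts become short.
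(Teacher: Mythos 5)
Your argument for \ref{ss1} coincides with the paper's: both read the exponents off the image of a homogeneous basis under $\phi_q$ using \Cref{Period1}, and the conclusion $\DELTA(q\mu)=q\DELTA(\mu)$ is immediate.

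For \ref{ss2} you take a genuinely different route, and it is correct. The paper stays entirely at the level of the numerical function $\DELTA$: the forward direction uses that $\DELTA$ jumps by exactly $1$ between adjacent multiplicities (\Cref{AN1}), so a drop of $q$ across a lattice distance of $q$ forces strictly monotone decrease from $q\mu$ outward; the reverse direction invokes \Cref{ANC33} with $q\mu$ as the center to pin down $\DELTA(q\nu)$ and hence $\DELTA(\nu)$. You instead translate the local-maximality condition at $\mu$ into divisibility data on the lower degree basis $\theta_\mu$ itself: membership of $\theta_\mu$ in $D(\A,\mu+\bm{e}_i)$ encodes $\DELTA(\mu+\bm{e}_i)>\DELTA(\mu)$, and coefficient-wise divisibility of $\theta_\mu$ by $\alpha_{H_i}$ encodes $\DELTA(\mu-\bm{e}_i)>\DELTA(\mu)$, both of which follow from \Cref{AN1}\textup{(2)} (the slightly nontrivial direction is that if $\alpha_{H_i}\mid\theta_\mu$ then $\theta_\mu/\alpha_{H_i}$ lies in $D(\A,\mu-\bm{e}_i)$, which forces $\DELTA(\mu-\bm{e}_i)>\DELTA(\mu)$ on degree grounds). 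Since $\theta_{q\mu}\doteq\phi_q(\theta_\mu)$ by \Cref{Period1} and uniqueness of the lower degree basis when $\DELTA>0$, and since linear forms are prime so divisibility is insensitive to raising to the $q$-th power, each neighbor inequality at $\mu$ matches the corresponding one at $q\mu$ in both directions at once. Your proof is somewhat longer and needs the auxiliary divisibility lemma, but it works directly with the derivation module and explains the phenomenon one level below the $\DELTA$-combinatorics; the paper's is shorter by leaning on the Abe--Numata structure theory (\Cref{ANC33}) that is already in place.
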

\begin{proof}
(1) Suppose that $\exp(\mathcal{A}, \mu) = (d_{1},d_{2})$. 
By \Cref{Period1}, we have $\exp(\mathcal{A}, q\mu) = (qd_{1},qd_{2})$. 
Therefore $\DELTA(q\mu) = |qd_1-qd_2| = q|d_1-d_2|=q\cdot \DELTA(\mu)$.

(2) First, suppose that $\mu \in Z$. 
Let $\nu$ be an arbitrary element adjacent to $\mu$. 
Then $\DELTA(\nu) = \DELTA(\mu)-1$ by \Cref{CCThm}. 
Hence 
\begin{align}
\DELTA(q\nu) = q \cdot \DELTA(\nu) = q \cdot \DELTA(\mu)-q = \DELTA(q\mu)-q. 
\end{align}
Since $|q\mu-q\nu| = q|\mu-\nu| = q$ and the values of the function $\DELTA$ of two adjacent elements differ exactly $1$ by \Cref{AN1}, the values of $\DELTA$ decrease along the path from $q\mu$ to $q\nu$. 
In particular, $\DELTA(q\mu)$ is greater than $\DELTA(\rho)$ for any $\rho$ adjacent to $q\mu$. 
Thus $q\mu \in Z$. 

Next, suppose that $q\mu \in Z$ and let $\nu$ be an element adjacent to $\mu$. 
Then 
\begin{align}
|q\mu-q\nu| = q|\mu-\nu| = q \leq q \cdot \DELTA(\mu) = \DELTA(q\mu). 
\end{align}
Therefore, by \Cref{ANC33}, 
\begin{align}
q\cdot \DELTA(\nu) = \DELTA(q\nu) = \bigl| \DELTA(q\mu) - |q\mu-q\nu| \bigr| = q(\DELTA(\mu)-1). 
\end{align}
Therefore $\DELTA(\nu)=\DELTA(\mu)-1$. 
Thus $\mu \in Z$. 
\end{proof}

\begin{remark}
\begin{figure}
    \centering
    \includegraphics[width=0.7\linewidth]{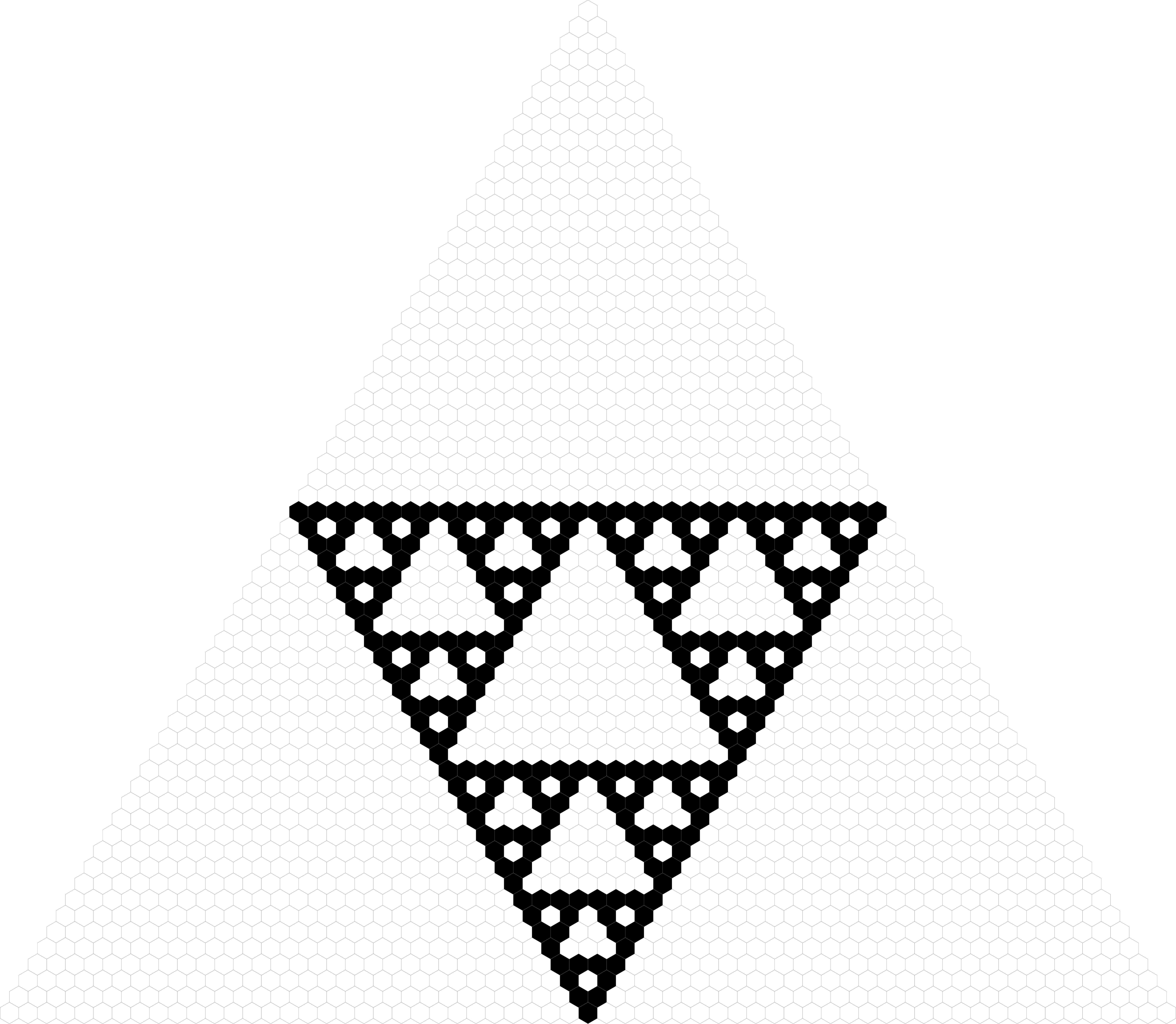}
    \caption{$p=2, \ |\mu| = 2 \cdot 2^5-2$}
    \label{fig:pascal_mod2}

    \ \vspace{3mm}

    \centering
    \includegraphics[width=0.7\linewidth]{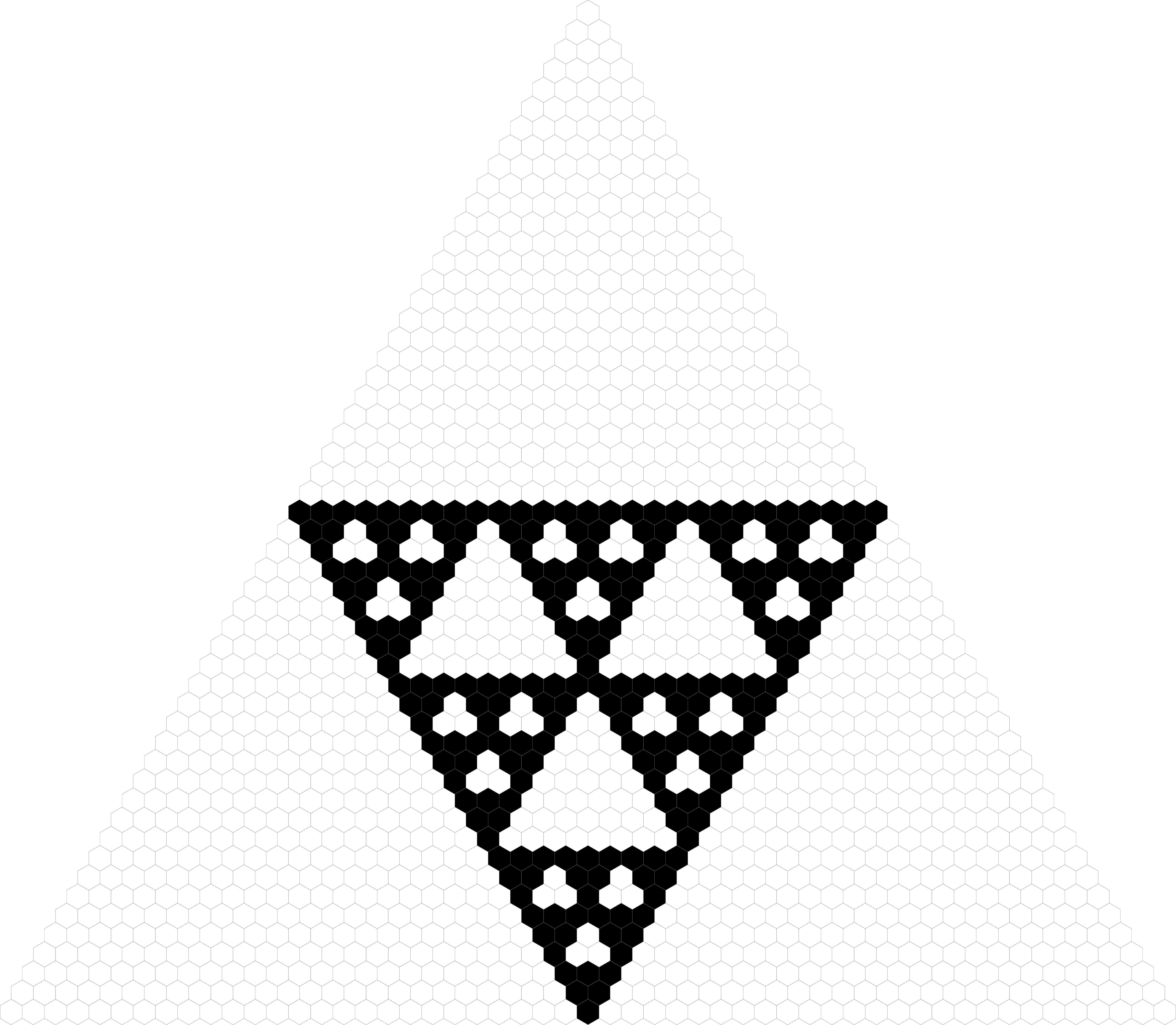}
    \caption{$p=3, \ |\mu| = 2 \cdot 3^3-2$}
    \label{fig:pascal_mod3}
\end{figure}
By \Cref{self-similarity}, the multiplicity lattice of an arrangement over a finite field has ``self-similarity". 
For example, let $\mathcal{A}$ be an arrangement consisting of three lines over $\mathbb{F}_{p}$ with $p$ prime. 
Then one will observe that the shape of the Pascal's triangle modulo $p$ appears on the plane defined by $|\mu|=2p^k-2$ in the multiplicity lattice. 
See \Cref{fig:pascal_mod2} and \ref{fig:pascal_mod3}, where a filled cell denotes a multiplicity $\mu$ with $\DELTA(\mu)=0$. 
\end{remark}

Abe \cite[Remark 3.2]{AA} also mentioned a behavior of the lower degree basis in the case of positive characteristic. 
The following proposition presents properties of the multiplicity lattice in positive characteristic.  
\begin{proposition}\label{lower degree is divisible by p}
Suppose that $\Char\mathbb{F} = p > 0$ and $\zeta \in Z$. 
If $\#\mathcal{A} \geq 3$ and $\DELTA(\zeta) > \#\mathcal{A}-2$, then the following statements hold: 
\begin{enumerate}[label=(\arabic*)]
\item\label{ldb1} Write the lower degree basis $\theta_{\zeta}$ as $\theta_{\zeta} = f\partial_{x} + g\partial_{y}$. 
Then $f,g \in \mathbb{F}[x^{p},y^{p}]$. 
In particular, $\deg \theta_{\zeta}$ is divisible by $p$. 
\item\label{ldb2} $\zeta \in p\LAMBDA_{>0}^\bal$.
In particular, $\DELTA(\zeta) \in p\mathbb{Z}$.
\end{enumerate}
\end{proposition}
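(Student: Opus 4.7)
\medskip

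The plan is to extract from $\theta_\zeta = f\partial_x + g\partial_y$ the auxiliary derivation
\begin{align}
\eta_u \coloneqq (\partial_u f)\partial_x + (\partial_u g)\partial_y \qquad (u \in \{x,y\})
\end{align}
and to use the center hypothesis to force $\eta_u = 0$. Since $\eta_u(\alpha) = \partial_u\bigl(\theta_\zeta(\alpha)\bigr)$ for any linear form $\alpha$, and since differentiation drops the $\alpha_i$-adic order of $\alpha_i^{\zeta_i}\tilde f_i$ by at most one, one has $\eta_u \in D(\A, \zeta - \mathbf{1})$, where $\mathbf{1} = (1,\dots,1)$. I may assume $\zeta_i \geq 1$ for every $i$: deleting any $H_i$ with $\zeta_i = 0$ preserves $\zeta$, its center status, and its value of $\DELTA$, while the elementary observation that a balanced $\mu$ with $\DELTA(\mu) > 0$ cannot live on fewer than three lines keeps $\#\A \geq 3$ in force.

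Applying \Cref{ANC33} to the center $\zeta$, the hypothesis $\DELTA(\zeta) > n - 2$ yields $\DELTA(\zeta - \mathbf{1}) = |\DELTA(\zeta) - n|$ since $|\mathbf{1}| = n < \DELTA(\zeta) + 2$. When $\DELTA(\zeta) \geq n$, one has $\deg\theta_{\zeta - \mathbf{1}} = \deg\theta_\zeta > \deg\eta_u$, and any non-zero homogeneous element of $D(\A, \zeta - \mathbf{1})$ has degree at least $\deg\theta_{\zeta - \mathbf{1}}$, forcing $\eta_u = 0$. The main obstacle is the borderline case $\DELTA(\zeta) = n - 1$, in which $\deg\eta_u = \deg\theta_{\zeta - \mathbf{1}}$ and the uniqueness (up to scalar) of the lower degree basis only yields $\eta_u = c_u\theta_{\zeta - \mathbf{1}}$ with $c_u \in \mathbb{F}$. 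Here Euler's identity produces the relation $(c_x x + c_y y)\theta_{\zeta - \mathbf{1}} = (\deg\theta_\zeta)\theta_\zeta$; evaluating both sides on $\alpha_i$ and dividing by $\alpha_i^{\zeta_i - 1}$, the primality of $\alpha_i$ forces either $\alpha_i \mid c_x x + c_y y$ or $\theta_{\zeta - \mathbf{1}} \in D(\A, (\zeta - \mathbf{1}) + \bm{e}_i)$. The latter is impossible since \Cref{ANC33} gives $\DELTA((\zeta - \mathbf{1}) + \bm{e}_i) = 0$, so the minimal basis degree there is $\deg\theta_\zeta > \deg\theta_{\zeta - \mathbf{1}}$. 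Hence $\alpha_i \mid c_x x + c_y y$ for every $i$, and since $n \geq 3$ this forces $c_x = c_y = 0$. (In the subcase $p \mid \deg\theta_\zeta$, the Euler relation already collapses directly.) Thus $\eta_x = \eta_y = 0$, so $f, g \in \mathbb{F}[x^p, y^p]$ and $p \mid \deg\theta_\zeta$, establishing \ref{ldb1}.

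For \ref{ldb2}, I first verify $\theta_\zeta(\alpha_i) \neq 0$ for each $i$: otherwise $\theta_\zeta$ factors as $f\tau$ with $\tau$ a constant derivation tangent to $H_i$, which forces $\prod_{j \neq i}\alpha_j^{\zeta_j} \mid f$ and, combined with balance, yields $\DELTA(\zeta) \leq 0$, a contradiction. By \ref{ldb1}, $\theta_\zeta(\alpha_i) \in \mathbb{F}[x^p, y^p] \setminus \{0\}$. After the coordinate change $u = \alpha_i$ with a complementary $v$, the identity $(u - bv)^p = u^p - b^p v^p$ in characteristic $p$ gives $\mathbb{F}[x^p, y^p] = \mathbb{F}[u^p, v^p]$. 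Expanding $\theta_\zeta(\alpha_i) = \sum c_{j,k} u^{pj} v^{pk}$, the divisibility $u^{\zeta_i} \mid \theta_\zeta(\alpha_i)$ shows that the minimal $u$-exponent is at least $\zeta_i$ but is also a multiple of $p$; hence if $p \nmid \zeta_i$ it must jump to at least $\zeta_i + 1$, giving $\alpha_i^{\zeta_i + 1} \mid \theta_\zeta(\alpha_i)$ and therefore $\theta_\zeta \in D(\A, \zeta + \bm{e}_i)$. This contradicts $\deg\theta_{\zeta + \bm{e}_i} = \deg\theta_\zeta + 1$, which follows from \Cref{AN1} and the center property. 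Therefore $p \mid \zeta_i$ for every $i$, so $\zeta \in p\LAMBDA_{>0}^\bal$ and $\DELTA(\zeta) = |\zeta| - 2\deg\theta_\zeta \in p\mathbb{Z}$.
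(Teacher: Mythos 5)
Your argument is correct. For part (2) you follow essentially the same route as the paper: once one knows $f,g\in\mathbb{F}[x^p,y^p]$, the $\alpha_i$-adic valuation of $\theta_\zeta(\alpha_i)$ is a multiple of $p$ that is $\geq\zeta_i$, so if $p\nmid\zeta_i$ then $\theta_\zeta\in D(\A,\zeta+\bm{e}_i)$, which contradicts the center property via \Cref{AN1}; the paper phrases this with $k=\min\{k:\zeta(H)\le pk\}$ and gets the same contradiction, while you add the (harmless) preliminary observation $\theta_\zeta(\alpha_i)\ne 0$ and say explicitly that $\mathbb{F}[x^p,y^p]$ is invariant under linear coordinate change. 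The genuine divergence is in part (1): the paper does not prove it at all but simply attributes it to Abe \cite[Remark 3.2]{AA}, whereas you supply a self-contained argument. Differentiating the coefficients of $\theta_\zeta$ produces derivations $\eta_x,\eta_y\in D(\A,\zeta-\mathbf{1})$ of degree $\deg\theta_\zeta-1$; \Cref{ANC33} (which applies precisely because $\DELTA(\zeta)>\#\A-2$) pins down $\DELTA(\zeta-\mathbf{1})=|\DELTA(\zeta)-\#\A|$, and the resulting degree count forces $\eta_u=0$ when $\DELTA(\zeta)\geq\#\A$, while the borderline $\DELTA(\zeta)=\#\A-1$ is dispatched by Euler's identity combined with the impossibility of $\theta_{\zeta-\mathbf{1}}\in D(\A,(\zeta-\mathbf{1})+\bm{e}_i)$ and the fact that $n\geq 3$ non-proportional linear forms cannot all divide a single linear form. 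This stays entirely inside the Abe--Numata machinery already quoted in the paper and makes the proposition logically self-contained, which the paper's proof is not. Two cosmetic remarks: you should state that the reduction to $\zeta_i\geq 1$ also preserves balance (it does, since $|\zeta|$ and $\max\zeta$ are unchanged), and the conclusion $\eta_u=c_u\theta_{\zeta-\mathbf{1}}$ in the borderline case deserves the one-line justification that $\DELTA(\zeta-\mathbf{1})=1>0$ makes the lower degree basis unique up to scalar.
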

\begin{proof}
\ref{ldb1}  This is proven by Abe \cite[Remark 3.2]{AA}. 

\ref{ldb2}  Let $H \in \mathcal{A}$. 
Without loss of generality, we can assume that $\alpha_{H} = x$. 
We will show $\zeta(H) \in p\mathbb{Z}$.
Define $k \coloneqq \min{\Set{k \in \mathbb{Z} | \zeta(H) \leq pk}}$.
By \ref{ldb1}, we have $\theta_\zeta(x) \in x^{pk}S$.

Assume that $\zeta(H) < pk$.
Let $\zeta'$ be the multiplicity adjacent to $\zeta$ satisfying $\zeta'(H) = \zeta(H) + 1$. 
Then $\DELTA(\zeta') < \DELTA(\zeta)$ by \Cref{CCThm}, and hence $\theta_{\zeta'} = x\theta_\zeta$ by \Cref{AN1}.
Since $\zeta(H)+1 \leq pk$, we have $\theta_\zeta \in D(\A,\zeta')$.
Since $\theta_{\zeta'}$ is the lower degree basis for $D(\A,\zeta')$, we have 
\begin{align}
    \deg\theta_\zeta \geq \deg\theta_{\zeta'} = \deg(x\theta_\zeta) = \deg\theta_\zeta + 1,
\end{align}
which is a contradiction.
Hence $\zeta(H) = pk \in p\mathbb{Z}$.

Let $\exp(\mathcal{A},\zeta) = (d_1,d_2)$ with $d_1 \leq d_2$. 
By \ref{ldb1}, $d_{1} \in p\mathbb{Z}$. 
Moreover, we have $d_1+d_2 = |\zeta| \in p\mathbb{Z}$. 
Thus $\DELTA(\zeta) = d_2-d_1 \in p\mathbb{Z}$. 
\end{proof}

\section{Exponents} \label{section3}
In the rest of this paper, let $(\mathcal{A}, \mu)$ be the $2$-multiarrangement over a finite field $\mathbb{F}_{p}$ with $p$ prime defined by 
\begin{align}
Q(\mathcal{A},\mu) = x^{\mu_1}y^{\mu_2}(x+y)^{\mu_{3}}. 
\end{align}

When $\mu$ is unbalanced, by \Cref{CCThm:unbalanced}, 
\begin{align}
\DELTA(\mu) = 2\max(\mu) - |\mu| \text{\quad and \quad} \exp(\mathcal{A},\mu) = \bigl( |\mu| - \max(\mu),\, \max(\mu) \bigr).
\end{align}
From now on, we will discuss balanced multiplicities. 

\subsection{Centers of connected components}
Recall 
\begin{align}
\LAMBDA^\bal = \Set{\mu \in \LAMBDA | \mu \text{ is balanced}}, \quad
\LAMBDA_{>0} = \Set{\mu \in \LAMBDA | \DELTA(\mu) > 0}, \quad
\LAMBDA_{>0}^\bal = \LAMBDA^\bal \cap \LAMBDA_{>0}.
\end{align}
Moreover, we will use the following notation. 
\begin{align}
\LAMBDA_\odd &\coloneqq \Set{\mu \in \LAMBDA | \text{$\DELTA(\mu)$ is odd}} = \Set{\mu \in \LAMBDA | \text{$|\mu|$ is odd}}, 
&\LAMBDA_\odd^\bal &\coloneqq \LAMBDA^\bal \cap \LAMBDA_\odd,\\
\LAMBDA_\even &\coloneqq \Set{\mu \in \LAMBDA | \text{$\DELTA(\mu)$ is even}} = \Set{\mu \in \LAMBDA | \text{$|\mu|$ is even}}, 
&\LAMBDA_\even^\bal &\coloneqq \LAMBDA^\bal \cap \LAMBDA_\even.
\end{align}

\begin{theorem}\label{thmcc}
	Every connected component of $\LAMBDA^\bal_{>0}$ is of the form $B(\zeta,p^k)$ for $k \in \mathbb{Z}_{\geq 0}$ and $\zeta \in p^k\LAMBDA^\bal_{\odd}$.
\end{theorem}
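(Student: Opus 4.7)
The plan is induction on $\DELTA(\zeta)$, where $\zeta$ ranges over the centers of connected components of $\LAMBDA^\bal_{>0}$. By \Cref{CCThm}, every such component has the form $B(\zeta,\DELTA(\zeta))$ for its unique center $\zeta \in Z$, so the theorem reduces to showing that every $\zeta \in Z$ satisfies both $\DELTA(\zeta) = p^k$ and $\zeta \in p^k \LAMBDA^\bal_\odd$ for a common $k \in \mathbb{Z}_{\geq 0}$.

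For the base case I take $\DELTA(\zeta) = 1$: then $\DELTA(\zeta)$ is odd, so $\zeta \in \LAMBDA^\bal_\odd$, and the conclusion holds with $k = 0$. For the inductive step, I assume $\DELTA(\zeta) > 1$. Because $\#\A = 3$, this gives $\DELTA(\zeta) > \#\A - 2$, so the hypotheses of \Cref{lower degree is divisible by p} are satisfied. That proposition yields $\zeta \in p\LAMBDA^\bal_{>0}$ and $\DELTA(\zeta) \in p\mathbb{Z}$. Writing $\zeta = p\zeta'$ with $\zeta' \in \LAMBDA^\bal_{>0}$, part (2) of \Cref{self-similarity} shows $\zeta' \in Z$ as well, and part (1) gives $\DELTA(\zeta') = \DELTA(\zeta)/p$, a positive integer strictly less than $\DELTA(\zeta)$. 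Applying the inductive hypothesis to $\zeta'$ yields $\DELTA(\zeta') = p^{k'}$ and $\zeta' \in p^{k'}\LAMBDA^\bal_\odd$ for some $k' \geq 0$, whence $\DELTA(\zeta) = p^{k'+1}$ and $\zeta = p\zeta' \in p^{k'+1}\LAMBDA^\bal_\odd$.

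I do not anticipate a serious obstacle, since the two inputs \Cref{lower degree is divisible by p} and \Cref{self-similarity} conspire precisely to deliver the descent: whenever $\DELTA(\zeta) > \#\A - 2 = 1$, a Frobenius factor of $p$ can be peeled off $\zeta$ while it stays in $Z$, and $\DELTA$ scales in lockstep. The only point that requires some care is recognizing why the base case is forced to be $\DELTA(\zeta) = 1$: Abe's characteristic-zero bound $\DELTA(\mu) \leq \#\A - 2$ fails in positive characteristic, but \Cref{lower degree is divisible by p} shows that each unit of failure is ``paid for'' by a factor of $p$, which is exactly the operation the induction unwinds, leaving $\DELTA(\zeta) = 1$ as the only possible unscaled residue.
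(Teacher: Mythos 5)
Your proof is correct and takes essentially the same approach as the paper's: both rely on \Cref{lower degree is divisible by p}~\ref{ldb2} to peel a factor of $p$ off $\zeta$ while \Cref{self-similarity} keeps it in $Z$ and scales $\DELTA$, and your explicit induction on $\DELTA(\zeta)$ simply makes precise the iterated descent that the paper's proof states in a single compressed sentence ("there exists $\zeta'$ such that $\zeta = p^k\zeta'$ and $\DELTA(\zeta')=1$").
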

\begin{proof}
	Let $C$ be a connected component of $\LAMBDA_{>0}^\bal$.
    By \Cref{CCThm}, there exists $\zeta \in \LAMBDA_{>0}^\bal$ such that $C = B(\zeta,\DELTA(\zeta))$.
    Since $\#\mathcal{A} = 3$ and $\mathcal{A}$ is defined over the finite field $\mathbb{F}_{p}$, 
	\Cref{self-similarity} \ref{ss2} and \Cref{lower degree is divisible by p} \ref{ldb2} imply that there exists $\zeta' \in \LAMBDA^\bal_{>0}$ such that $\zeta = p^k\zeta'$ and $\DELTA(\zeta') = 1$.
	Hence $\DELTA(\zeta) = p^k\DELTA(\zeta') = p^k$ by \Cref{self-similarity} \ref{ss1}.
	Moreover, $\zeta' \in \LAMBDA_\odd^\bal$ since $\DELTA(\zeta')=1$ is odd.
    Thus $\zeta \in p^k\LAMBDA^\bal_{\odd}$. 
\end{proof}

Recall that $Z$ is the set of the centers of the connected components of $\LAMBDA_{>0}^{\bal}$. 
Define the subset $Z(p^k)$ of $Z$ by 
\begin{align}
Z(p^k) \coloneqq \Set{\zeta \in Z | \DELTA(\zeta) = p^{k}}. 
\end{align}
By the above theorem, $Z = \bigcup_{k \in \mathbb{Z}_{\geq 0}}Z(p^k)$.
Moreover, it follows from \Cref{self-similarity} \ref{ss2} and \Cref{lower degree is divisible by p} \ref{ldb2} that $Z(p^{k}) = p^{k}Z(1)$ for any $k \in \mathbb{Z}_{\geq 0}$.
Therefore, it is enough for us to study $Z(1)$.

\begin{theorem}\label{Z(1)}
	\begin{align}
		Z(1) = \LAMBDA^\bal_\odd \setminus \bigcup_{m > 0}B(p^m\LAMBDA^\bal_\odd,\, p^m),
	\end{align}
	where $B(S,r) = \bigcup_{\zeta \in S}B(\zeta,r)$ for $S \subseteq \LAMBDA$ and $r \in \mathbb{Z}_{> 0}$.
\end{theorem}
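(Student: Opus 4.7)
The plan is to prove both inclusions by combining \Cref{thmcc} (every component is $B(\zeta_0, p^k)$ with $\zeta_0 \in p^k\LAMBDA^\bal_\odd$) with the already-observed identity $Z(p^k) = p^k Z(1)$, which follows from \Cref{self-similarity}\ref{ss2} and \Cref{lower degree is divisible by p}\ref{ldb2}. The rough picture is that membership in some $B(p^m\LAMBDA^\bal_\odd, p^m)$ is a ``certificate'' that $\zeta$ sits inside a component of radius at least $p$, and hence cannot have $\DELTA(\zeta) = 1$.

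For the inclusion $\supseteq$, I would take $\zeta \in \LAMBDA^\bal_\odd$ that avoids $B(p^m\LAMBDA^\bal_\odd, p^m)$ for every $m > 0$. Oddness of $|\zeta|$ gives $\DELTA(\zeta) \geq 1$, so by \Cref{thmcc} the component of $\zeta$ is $B(\zeta_0, p^k)$ with center $\zeta_0 \in p^k\LAMBDA^\bal_\odd$. If $k \geq 1$, this exhibits $\zeta \in B(p^k\LAMBDA^\bal_\odd, p^k)$, contradicting the hypothesis; so $k = 0$, whence $\zeta = \zeta_0 \in Z(1)$.

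For the reverse inclusion, I would fix $\zeta \in Z(1)$ (so $\DELTA(\zeta) = 1$ is odd, forcing $\zeta \in \LAMBDA^\bal_\odd$) and assume, toward contradiction, that $\zeta \in B(p^m\LAMBDA^\bal_\odd, p^m)$ for some $m \geq 1$. Writing the witness as $p^m\nu'$ with $\nu' \in \LAMBDA^\bal_\odd$, I would pass to the component $B(\eta, p^j)$ of $\nu'$ and invoke \Cref{self-similarity}\ref{ss2} to promote $\eta$ to $p^m\eta \in Z(p^{m+j})$, which is the center of the component of $p^m\nu'$. A triangle inequality on the integer-valued $\ell_1$-distance then gives
\begin{align}
|\zeta - p^m\eta| \leq |\zeta - p^m\nu'| + p^m|\nu' - \eta| \leq (p^m - 1) + p^m(p^j - 1) = p^{m+j} - 1,
\end{align}
placing $\zeta \in B(p^m\eta, p^{m+j})$. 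Since connected components partition $\LAMBDA^\bal_{>0}$, this ball must coincide with the component $B(\zeta, 1) = \{\zeta\}$ of $\zeta$; but it also contains $p^m\eta \neq \zeta$ because its radius is at least $p \geq 2$, which is the desired contradiction.

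The one delicate point I expect is extracting the strict bound $|\zeta - p^m\eta| < p^{m+j}$ from two strict inequalities of the form $< p^m$ and $< p^j$: a naive triangle inequality would only yield $< 2p^{m+j}$, which is too weak to confine $\zeta$ inside the enclosing ball. The trick is that the $\ell_1$-distance on $\LAMBDA$ is integer-valued, so each strict inequality tightens by one before combining, producing exactly the sharp bound $p^{m+j} - 1$ that the argument requires.
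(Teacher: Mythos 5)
Your proof is correct. In the $\supseteq$ direction your argument is essentially the paper's: avoiding every ball $B(p^m\LAMBDA^\bal_\odd, p^m)$ with $m>0$ forces the exponent $k$ in \Cref{thmcc} to be zero, so the element is its own center.

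In the $\subseteq$ direction you take a longer route than the paper does. The paper applies \Cref{c3} directly at the pair $(\zeta, p^m\nu')$: since $\DELTA(p^m\nu') = p^m\DELTA(\nu') \ge p^m > |\zeta - p^m\nu'|$, the two lie in the same connected component $C$, and since $\zeta \in Z(1)$ is a center, $C = B(\zeta,1) = \{\zeta\}$; this forces $p^m\nu' = \zeta$, whence $1 = \DELTA(\zeta) = \DELTA(p^m\nu') \ge p^m$, a contradiction. You instead locate the center $p^m\eta$ of the component of $p^m\nu'$ (via \Cref{self-similarity} and \Cref{thmcc}) and then confine $\zeta$ inside $B(p^m\eta, p^{m+j})$ by a sharpened triangle inequality, exploiting integrality of the $\ell_1$-distance to squeeze the bound to $p^{m+j}-1$ rather than a useless $< 2p^{m+j}$. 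That observation is the right repair for your route, and the proof does close. But the whole center-finding and distance-combining step is a detour: once you know $|\zeta - p^m\nu'| < p^m \le \DELTA(p^m\nu')$, \Cref{c3} already places $\zeta$ and $p^m\nu'$ in the same component with no need to identify its center or compute a distance to it. Both routes terminate in the same contradiction (the component of a $Z(1)$ element is a singleton, yet contains a second point); the paper's version is simply shorter and avoids the sharp triangle inequality altogether.
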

\begin{proof}
	First, let $\mu \in Z(1)$.
	Then $\mu \in \LAMBDA^\bal_\odd$.
	Assume that $\mu \in B(p^m\nu,\, p^m)$ for some $\nu \in \LAMBDA^\bal_\odd$ and $m \in \mathbb{Z}_{>0}$.
    Let $C$ be the connected component containing $p^m \nu$. 
    Since $\DELTA(p^m\nu) = p^m\DELTA(\nu) \geq p^m$ by \Cref{self-similarity} \ref{ss1}, 
    \begin{align}
    \mu \in B(p^{m}\nu,p^{m}) \subseteq B(p^{m}\nu,\, \DELTA(p^{m}\nu)) \subseteq C
    \end{align}
    by \Cref{c3}. 
	Since $\mu \in C$ and $\mu \in Z(1)$, we have $C = B(\mu,1)$.
    However, this is a  contradiction.
	Therefore $\mu$ belongs to the right hand side.
	
	Conversely, suppose that $\mu$ belongs to the right-hand side.
    Then $\DELTA(\mu) \geq 1$ since $\mu \in \LAMBDA^\bal_\odd$.
	Therefore $\mu$ belongs to some connected component of $\LAMBDA^\bal_{>0}$.
    Namely, by \Cref{thmcc}, there exist $m \in \mathbb{Z}_{\geq 0}$ and $\zeta \in p^{m}\LAMBDA_{\odd}^{\bal}$ such that $\mu \in B(\zeta,p^m)$.
    This implies $\mu \in B(p^{m}\LAMBDA^\bal_\odd,\, p^m)$. 
	From the assumption, we have $m = 0$ and hence $\mu = \zeta \in Z(1)$.
\end{proof}

\begin{corollary}\label{c210}
	For any $k \in \mathbb{Z}_{\geq 0}$,
	\begin{align}
		Z(p^k) = p^k\LAMBDA^\bal_\odd \setminus \bigcup_{m > k}B(p^m\LAMBDA^\bal_\odd,\, p^m).
	\end{align}
\end{corollary}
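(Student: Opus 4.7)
The plan is to derive this corollary directly from \Cref{Z(1)} via the identity $Z(p^k) = p^k Z(1)$ noted in the paragraph after \Cref{thmcc} (which itself follows from \Cref{self-similarity}~\ref{ss2} and \Cref{lower degree is divisible by p}~\ref{ldb2}). First I would apply this identity and substitute the formula from \Cref{Z(1)} to obtain
\begin{align*}
Z(p^k) = p^k\Bigl(\LAMBDA^\bal_\odd \setminus \bigcup_{m>0}B(p^m\LAMBDA^\bal_\odd,\, p^m)\Bigr) = p^k\LAMBDA^\bal_\odd \setminus p^k\bigcup_{m>0}B(p^m\LAMBDA^\bal_\odd,\, p^m),
\end{align*}
where pushing the factor of $p^k$ through the set difference uses injectivity of multiplication by $p^k$ on the integer lattice $\LAMBDA$.

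The remaining task is a reindexing: I would show that inside $p^k\LAMBDA^\bal_\odd$, the set $p^k\bigcup_{m>0}B(p^m\LAMBDA^\bal_\odd,\,p^m)$ coincides with $\bigcup_{m'>k}B(p^{m'}\LAMBDA^\bal_\odd,\,p^{m'})$. For one inclusion, if $\mu = p^k\mu'$ with $\mu' \in B(p^m\nu,\,p^m)$ for some $\nu \in \LAMBDA^\bal_\odd$ and $m > 0$, then $|\mu - p^{k+m}\nu| = p^k|\mu' - p^m\nu| < p^{k+m}$, so $\mu \in B(p^{k+m}\LAMBDA^\bal_\odd,\, p^{k+m})$ with $m'=k+m > k$. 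For the other inclusion, if $\mu = p^k\mu' \in p^k\LAMBDA^\bal_\odd$ satisfies $|\mu - p^{m'}\nu| < p^{m'}$ for some $\nu \in \LAMBDA^\bal_\odd$ and $m' > k$, then dividing by $p^k$ gives $|\mu' - p^{m'-k}\nu| < p^{m'-k}$, so $\mu' \in B(p^{m'-k}\LAMBDA^\bal_\odd,\,p^{m'-k})$, and $m = m'-k > 0$ yields $\mu \in p^k\bigcup_{m>0}B(p^m\LAMBDA^\bal_\odd,\,p^m)$.

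I do not anticipate a real obstacle: the statement is essentially a rescaling of the $k=0$ case, with the whole argument reducing to injectivity of multiplication by $p^k$ on $\LAMBDA$ together with a straightforward index shift $m \leftrightarrow m'-k$. The only point requiring care is that the reverse inclusion above uses the assumption $\mu \in p^k\LAMBDA^\bal_\odd$ to divide the inequality cleanly by $p^k$; this is precisely why restricting to $p^k\LAMBDA^\bal_\odd$ in the ambient difference is essential, and it is what makes the two forms of the excluded union agree on the relevant subset.
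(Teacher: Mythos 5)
Your proof is correct and follows the route the paper intends: the corollary is stated without proof precisely because it is a rescaling of \Cref{Z(1)} via the identity $Z(p^k) = p^k Z(1)$ noted just before that theorem, and your reindexing argument together with the observation that multiplication by $p^k$ distributes over set difference (by injectivity) and scales the balls cleanly supplies exactly the omitted details.
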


\begin{corollary}\label{c211}
	\begin{align}
		\DELTA^{-1}(0) = \LAMBDA^\bal \setminus \bigcup_{m \geq 0}B(p^m\LAMBDA^\bal_\odd,\, p^m).
	\end{align}
\end{corollary}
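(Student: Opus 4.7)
The plan is to deduce the corollary from \Cref{thmcc} via two easy observations. First, I would note that $\DELTA^{-1}(0) \subseteq \LAMBDA^\bal$: if $\mu$ were unbalanced, then by \Cref{CCThm:unbalanced} it would lie in some $C_i$ with $\DELTA(\mu) = 2\mu_i - |\mu| > 0$. So the claim reduces to proving the identity
\begin{align}
\LAMBDA^\bal_{>0} = \bigcup_{m \geq 0} B(p^m\LAMBDA^\bal_\odd,\, p^m),
\end{align}
since taking complements inside $\LAMBDA^\bal$ then yields the asserted description of $\DELTA^{-1}(0)$.

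For the inclusion $\subseteq$, I would apply \Cref{thmcc} directly: every $\mu \in \LAMBDA^\bal_{>0}$ sits in a connected component of the form $B(\zeta, p^k)$ with $\zeta \in p^k\LAMBDA^\bal_\odd$, so $\mu \in B(p^k\LAMBDA^\bal_\odd,\, p^k)$. For the reverse inclusion, given $\nu' \in \LAMBDA^\bal_\odd$ and $\mu \in B(p^m\nu', p^m)$, I would set $\nu \coloneqq p^m\nu'$ and invoke \Cref{self-similarity}\ref{ss1} to obtain $\DELTA(\nu) = p^m\DELTA(\nu') \geq p^m > |\mu - \nu|$. In particular $\nu \in \LAMBDA^\bal_{>0}$, and \Cref{c3} then places $\mu$ in the same connected component as $\nu$, so $\mu \in \LAMBDA^\bal_{>0}$.

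There is no real obstacle here; the corollary is essentially a repackaging of \Cref{thmcc}, and the only points that require any argument at all are the reduction to $\LAMBDA^\bal_{>0}$ and the invocation of \Cref{c3} to enlarge a ball of radius $p^m$ into the component containing $p^m\nu'$. Alternatively, one could derive the identity from \Cref{c210} by decomposing $\LAMBDA^\bal_{>0}$ as the disjoint union of the components $B(\zeta, p^k)$ indexed by $\zeta \in Z(p^k)$ and $k \geq 0$, but the route through \Cref{thmcc} outlined above is the shortest.
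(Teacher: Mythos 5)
Your proof is correct and fills in exactly what the paper leaves implicit (the paper states this as an unproved corollary following \Cref{Z(1)} and \Cref{c210}). The two observations you isolate — that $\DELTA^{-1}(0)\subseteq\LAMBDA^{\bal}$ by \Cref{CCThm:unbalanced}, and that \Cref{c3} together with \Cref{self-similarity}\ref{ss1} shows $B(p^m\LAMBDA^\bal_\odd,p^m)\subseteq\LAMBDA^\bal_{>0}$ while \Cref{thmcc} gives the reverse inclusion — are precisely the ingredients the authors rely on, so this is the same route, made explicit.
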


\subsection{Computation of the exponents}
Given $\mu \in \LAMBDA$ and $k \in \mathbb{Z}_{>0}$, suppose that $\mu = p^k\alpha + \beta$, where $\beta$ satisfies $0 \leq \beta_i < p^k$ for all $i \in \{1,2,3\}$.
To compute $\exp(\Amu)$, we focus on $\beta$.
From the assumption, $\mu$ belongs to the set $C(\alpha,p^k)$ given by
\begin{align}
	C(\alpha,p^k) \coloneqq \Set{\mu \in \LAMBDA | p^k\alpha_i \leq \mu_i < p^k(\alpha_i+1) \text{ for all $i \in\{1,2,3\}$}}.
\end{align}
The set $C(\alpha,p^k)$ is contained in the cube of side length $p^k$ with the vertex set
\begin{align}
	\Set{ \alpha + \sum_{i \in I}p^k\bm{e}_i | I \subseteq \{1,2,3\}},
\end{align}
where $\bm{e}_1 = (1,0,0)$, $\bm{e}_2 = (0,1,0)$, and $\bm{e}_3 = (0,0,1)$.
Note that four points of the eight vertices above belong to $p^k\LAMBDA_\even$ and the others belong to $p^k\LAMBDA_\odd$. 

Define the set $T(p^k)$ by 
\begin{align}
    T(p^k) &\coloneqq \Set{x_1(\bm{e}_2+\bm{e}_3) + x_2(\bm{e}_1+\bm{e}_3) + x_3(\bm{e}_1+\bm{e}_2) \in \LAMBDA | \begin{lgathered}
        x_1,x_2,x_3 \geq 0,\\ x_1 + x_2 + x_3 \leq p^k
    \end{lgathered}}. 
\end{align}
Then $T(p^k)$ is the set of lattice points of the regular tetrahedron with the vertex set (see \Cref{fig:tetrahedron1})
\begin{align}
	\Bigl\{ \bm{0},\ p^k(\bm{e}_2 + \bm{e}_3),\ p^k(\bm{e}_1 + \bm{e}_3),\ p^k(\bm{e}_1 + \bm{e}_2)\Bigr\}.
\end{align}
Moreover, $p^k\bm{1} - T(p^k)$ is the set of lattice points of the regular tetrahedron with the vertex set (see \Cref{fig:tetrahedron2})
\begin{align}
	\Bigl\{ p^k\bm{e}_1,\ p^k\bm{e}_2,\ p^k\bm{e}_3,\ p^k\bm{1}\Bigr\}.
\end{align}
\begin{figure}[t]
    \centering
    \begin{tikzpicture}[scale = 1.5]
        \node[coordinate] (000) at (0,0){};
        \node[coordinate] (100) at (2,0){};
        \node[coordinate] (010) at (1+1/8,1-1/2){};
        \node[coordinate] (110) at (3+1/8,1-1/2){};
        \node[coordinate] (001) at (0,2){};
        \node[coordinate] (101) at (2,2){};
        \node[coordinate] (011) at (1+1/8,3-1/2){};
        \node[coordinate] (111) at (3+1/8,3-1/2){};

        \draw (000)  node [below left] {$p^k\alpha$};
        \draw (100)  node [below] {$p^k\alpha + p^k\bm{e}_1$};
        \draw (010)  node [above right] {$p^k\alpha + p^k\bm{e}_2$};
        \draw (001)  node [above left] {$p^k\alpha + p^k\bm{e}_3$};
        \draw (111)  node [above right] {$p^k\alpha + p^k\bm{1}$};
        
        \draw (101) -- (100) -- (000) -- (001) -- (101) -- (111) -- (011) -- (001);
        \draw (100) -- (110) -- (111);
        \draw[dashed] (000) -- (010) -- (110);
        \draw[dashed] (010) -- (011);
        \draw[thick] (000) -- (011) -- (101) -- (000) -- (110) -- (101);
        \draw[thick, dashed] (110) -- (011);
    \end{tikzpicture}
    \caption{The tetrahedron $p^k\alpha + T(p^k)$} \label{fig:tetrahedron1}

    \ \vspace{3mm}
    
    \begin{tikzpicture}[scale = 1.5]
        \node[coordinate] (000) at (0,0){};
        \node[coordinate] (100) at (2,0){};
        \node[coordinate] (010) at (1+1/8,1-1/2){};
        \node[coordinate] (110) at (3+1/8,1-1/2){};
        \node[coordinate] (001) at (0,2){};
        \node[coordinate] (101) at (2,2){};
        \node[coordinate] (011) at (1+1/8,3-1/2){};
        \node[coordinate] (111) at (3+1/8,3-1/2){};

        \draw (000)  node [below left] {$p^k\alpha$};
        \draw (100)  node [below] {$p^k\alpha + p^k\bm{e}_1$};
        \draw (010)  node [left] {$p^k\alpha + p^k\bm{e}_2$};
        \draw (001)  node [above left] {$p^k\alpha + p^k\bm{e}_3$};
        \draw (111)  node [above right] {$p^k\alpha + p^k\bm{1}$};
        
        \draw (101) -- (100) -- (000) -- (001) -- (101) -- (111) -- (011) -- (001);
        \draw (100) -- (110) -- (111);
        \draw[dashed] (000) -- (010) -- (110);
        \draw[dashed] (010) -- (011);
        \draw[thick] (001) -- (100) -- (111) -- (001) -- (010) -- (100);
        \draw[thick, dashed] (111) -- (010);
    \end{tikzpicture}
    \caption{The tetrahedron $p^k\alpha+p^k\boldsymbol{1}-T(p^k)$}
    \label{fig:tetrahedron2}
\end{figure}

\begin{lemma}\label{l21}
	Given $\mu \in \LAMBDA$ and $k \in \mathbb{Z}_{>0}$, suppose that $\mu = p^k\alpha + \beta \ \ (0 \leq \beta_i < p^k)$.
	\begin{enumerate}[label=(\arabic*)]
		\item \label{l211} If $\mu \not\in B(p^k\LAMBDA_\odd,\, p^k)$, then one of the following holds:
		\begin{enumerate}[label=(\roman*)]
			\item \label{l21i} $\alpha \in \LAMBDA_\even$ and $\max\beta \leq \dfrac{|\beta|}{2} \leq p^k$;
			\item \label{l21ii} $\alpha \in \LAMBDA_\odd$ and $\max(p^k\bm{1} - \beta) \leq \dfrac{|p^k\bm{1}-\beta|}{2} \leq p^k$, where $\bm{1} = (1,1,1)$.
		\end{enumerate}
		\item \label{l212} If $\mu \in B(p^k\LAMBDA_\odd,\, p^k)$, then one of the following holds:
		\begin{enumerate}[label=(\roman*),start=3]
			\item \label{l21iii} $\alpha \in \LAMBDA_\even$, $\beta_i > \dfrac{|\beta|}{2}$ and $\mu \in B(p^k(\alpha + \bm{e}_i),\, p^k)$ for some $i \in \{1,2,3\}$;
			\item \label{l21iv} $\alpha \in \LAMBDA_\even$, $\dfrac{|\beta|}{2} > p^k$ and $\mu \in B(p^k(\alpha + \bm{1}),\, p^k)$;
			\item \label{l21v} $\alpha \in \LAMBDA_\odd$, $p^k - \beta_i > \dfrac{|p^k\bm{1} - \beta|}{2}$ and $\mu \in B(p^k(\alpha + \bm{e}_{i+1} + \bm{e}_{i+2}),\, p^k)$ for some $i \in \{1,2,3\}$;
			\item \label{l21vi} $\alpha \in \LAMBDA_\odd$, $\dfrac{|p^k\bm{1} - \beta|}{2} > p^k$ and $\mu \in B(p^k\alpha ,\, p^k)$.
		\end{enumerate}	
	\end{enumerate}
\end{lemma}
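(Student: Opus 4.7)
The plan is to classify, for a given decomposition $\mu = p^k\alpha + \beta$ with $0 \le \beta_i < p^k$, all $\gamma \in \LAMBDA_\odd$ satisfying $|\mu - p^k\gamma| < p^k$, and then translate each resulting distance inequality into the inequalities on $\beta_i$, $|\beta|$, and $p^k$ appearing in the statement.

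The first step, which is the crux of the argument, is an a priori restriction of the candidate centers. If $|\mu - p^k\gamma| < p^k$ for some $\gamma \in \LAMBDA$, then each coordinate difference satisfies $|\mu_i - p^k\gamma_i| \le |\mu - p^k\gamma| < p^k$. Combined with $\mu_i - p^k\alpha_i = \beta_i \in [0,p^k)$ this forces $\gamma_i \in \{\alpha_i,\alpha_i+1\}$, so $\gamma = \alpha + \sum_{i \in S}\bm{e}_i$ for some $S \subseteq \{1,2,3\}$. The parity of $|\gamma|$ then pins $|S|$ to be of opposite parity to $|\alpha|$, so the admissible centers are precisely the four odd-parity vertices of the cube $C(\alpha,p^k)$ depicted in Figures~\ref{fig:tetrahedron1} and~\ref{fig:tetrahedron2}.

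The second step is a direct computation of the $L^1$-distance from $\mu$ to each of these four vertices. For $\alpha \in \LAMBDA_\even$ the admissible centers are $p^k(\alpha+\bm{e}_i)$ for $i \in \{1,2,3\}$ and $p^k(\alpha+\bm{1})$; a short calculation shows that the condition $<p^k$ is equivalent to $\beta_i > |\beta|/2$ and to $|\beta|/2 > p^k$, respectively. For $\alpha \in \LAMBDA_\odd$ the admissible centers are $p^k\alpha$ and $p^k(\alpha + \bm{e}_{i+1}+\bm{e}_{i+2})$; the condition $<p^k$ becomes $|p^k\bm{1}-\beta|/2 > p^k$ and $p^k - \beta_i > |p^k\bm{1}-\beta|/2$, respectively, with indices taken modulo $3$.

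The dichotomy is then immediate: assuming $\mu \in B(p^k\LAMBDA_\odd, p^k)$ and $\alpha \in \LAMBDA_\even$ places us in~\ref{l21iii} or~\ref{l21iv}; the parity-odd analogue gives~\ref{l21v} or~\ref{l21vi}; and the negations of these four inequalities yield~\ref{l21i} and~\ref{l21ii}. I would additionally note the automatic exclusivity of the sub-cases: two coordinates of $\beta$ cannot simultaneously exceed $|\beta|/2$, and $\beta_i < p^k$ is incompatible with $|\beta|/2 > p^k$ (with the symmetric observation for $p^k\bm{1}-\beta$). The main obstacle is precisely the first step, because without the componentwise bound the candidate centers in $p^k\LAMBDA_\odd$ would form an infinite set; the $L^1$-control reduces them to a four-vertex enumeration and makes the remaining algebra routine.
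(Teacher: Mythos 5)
Your proposal is correct, and it takes a route that is genuinely different from, and arguably cleaner than, the paper's. The paper parametrizes $\beta$ as $x_1(\bm{e}_2+\bm{e}_3) + x_2(\bm{e}_1+\bm{e}_3) + x_3(\bm{e}_1+\bm{e}_2)$, introduces the tetrahedron $T(p^k)$, and asserts (without derivation inside the proof) the identity $C(\alpha,p^k)\setminus B(p^k\LAMBDA_\odd,\, p^k) = p^k\alpha + T(p^k)$ for $\alpha \in \LAMBDA_\even$ together with its odd analogue, then translates tetrahedron (non-)membership into the stated inequalities. You instead kill the infinitely many a priori candidate centers with a direct componentwise $L^1$ argument: since $|\mu_i - p^k\gamma_i| \le |\mu - p^k\gamma| < p^k$ and $\beta_i \in [0,p^k)$, any admissible center $p^k\gamma$ must have $\gamma_i \in \{\alpha_i, \alpha_i+1\}$, hence is one of the eight cube vertices, and the parity constraint leaves exactly four; you then compute the $L^1$-distance to each and take negations for part (1). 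Both proofs ultimately reduce to the same four distance computations, but your reduction of the candidate set is explicit and self-contained, whereas the paper's relies on the tetrahedron decomposition of the cube that it states without justification. Your extra observation about the automatic mutual exclusivity of the subcases (at most one $\beta_i$ can exceed $|\beta|/2$, and $\beta_i < p^k$ rules out $|\beta|/2 > p^k$ holding simultaneously with $\beta_i > |\beta|/2$) is correct, is not recorded in the paper, and is a pleasant byproduct, though it is not needed for the lemma's conclusion.
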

\begin{proof}
     Consider the following representation for $\beta$:
    \begin{align}
        \beta = x_1(\bm{e}_2+\bm{e}_3) + x_2(\bm{e}_1+\bm{e}_3) + x_3(\bm{e}_1+\bm{e}_2) = (x_2 + x_3,\, x_1 + x_3,\, x_1 + x_2).
    \end{align}
    
	\ref{l211} Take $\mu \in \LAMBDA \setminus B(p^k\LAMBDA_\odd,\, p^k)$.
	First, suppose that $\alpha \in \LAMBDA_\even$.
	Then we have 
	\begin{align}
		C(\alpha,p^k)\setminus B(p^k\LAMBDA_\odd,\, p^k) = p^k\alpha + T(p^k) = \Set{p^k\alpha + \tau | \tau \in T(p^k)} \label{s1}
	\end{align}
	and hence $\beta \in T(p^k)$.
	Since it satisfies $x_1,x_2,x_3 \geq 0$ and $x_1 + x_2 + x_3 \leq p^k$, then $\mu$ satisfies \ref{l21i}.
	
	Second, suppose that $\alpha \in \LAMBDA_\odd$.
	Then we have
	\begin{align}
		C(\alpha,p^k)\setminus B(p^k\LAMBDA_\odd,\, p^k) = p^k\alpha + p^k\bm{1} - T(p^k) = \Set{p^k\alpha + p^k\bm{1} - \tau | \tau \in T(p^k)} \label{s2}
	\end{align}
	and hence $p^k\bm{1} - \beta \in T(p^k)$.
	Thus $\mu$ satisfies \ref{l21ii}.
	
	\ref{l212} Take $\mu \in B(p^k\LAMBDA_\odd,\, p^k)$.
	First, suppose that $\alpha \in \LAMBDA_\even$.
	By \eqref{s1}, we have $\beta \not\in T(p^k)$ and hence it does not satisfy either $x_1,x_2,x_3 \geq 0$ or $x_1 + x_2 + x_3 \leq p^k$.
	If $x_i < 0$ for some $i \in \{1,2,3\}$, then it satisfies that 
	\begin{align}
		\max{\beta} \geq \beta_i = x_{i+1} + x_{i+2} > x_i + x_{i+1} + x_{i+2} = \dfrac{|\beta|}{2}
	\end{align}
	and furthermore that $\beta \in B(p^k\bm{e}_i,\, p^k)$.
	Thus $\mu$ satisfies \ref{l21iii}.
	If $x_1 + x_2 + x_3 > p^k$, then it satisfies that 
	\begin{align}
		\dfrac{|\beta|}{2} = x_1 + x_2 + x_3 > p^k
	\end{align}
	and furthermore that $\beta \in B(p^k\bm{1},\, p^k)$.
	Therefore $\mu$ satisfies \ref{l21iv}.
	
	Second, suppose that $\alpha \in \LAMBDA_\odd$.
	By \eqref{s2}, we have $p^k\bm{1} - \beta \not\in T(p^k)$ and hence either of the following is satisfied:
	\begin{itemize}
		\item $p^k-\beta_i > \dfrac{|p^k\bm{1}-\beta|}{2}$ \quad and \quad  $p^k\bm{1}-\beta \in B(p^k\bm{e}_i,\, p^k)$;
		\item $\dfrac{|p^k\bm{1}-\beta|}{2} > p^k$ \quad and \quad $p^k\bm{1} - \beta \in B(p^k\bm{1},\, p^k)$.
	\end{itemize}
	Therefore $\mu$ satisfies \ref{l21v} or \ref{l21vi}.
\end{proof}

The following theorem allows us to compute $\exp(\Amu)$ algorithmically. 
\begin{theorem}\label{main thm:expnonents}
	Let $\mu \in \LAMBDA^\bal$. 
    Define 
    \begin{align}
        k \coloneqq \max\left(\{0\} \cup \Set{m \in \mathbb{Z}_{> 0} | p^{m} \leq \dfrac{|\mu|}{2}, \ \mu \in B(p^m\LAMBDA^\bal_\odd,\, p^m)}\right).
    \end{align}
	\begin{enumerate}[label=(\arabic*)]
		\item \label{a1} Suppose that $k=0$.
		\begin{enumerate}[label=(\alph*)]
			\item \label{aa} If $\mu \in \LAMBDA^\bal_\odd$, then 
			\begin{align}
				\DELTA(\mu) = 1 \text{\quad and \quad} \exp(\Amu) = \left( \dfrac{|\mu|-1}{2},\ \dfrac{|\mu| + 1}{2}\right);
			\end{align}
			
			\item \label{ab} If $\mu \in \LAMBDA^\bal_\even$, then 
			\begin{align}
				\DELTA(\mu) = 0 \text{\quad and \quad} \exp(\Amu) = \left( \dfrac{|\mu|}{2},\ \dfrac{|\mu|}{2} \right).
			\end{align}
			
		\end{enumerate}
		\item \label{a2} Suppose that $k>0$ and $\mu = p^k\alpha + \beta \ (0 \leq \beta_i < p^k)$.
		\begin{enumerate}[label=(\alph*),start=3]
			\item \label{ac} If $\alpha \in \LAMBDA_\even$ and $\beta_i  > \dfrac{|\beta|}{2}$ (\ref{l21iii} is satisfied), then 
			\begin{align}
				&\DELTA(\mu) = \beta_i - \beta_{i+1} - \beta_{i+2} \\&\text{and \quad} \exp(\Amu) = \left( \dfrac{|\alpha|}{2}p^k + \beta_{i+1} + \beta_{i+2},\ \dfrac{|\alpha|}{2}p^k + \beta_i \right);
			\end{align}
			\item \label{ad} If $\alpha \in \LAMBDA_\even$ and $\dfrac{|\beta|}{2} > p^k$ (\ref{l21iv} is satisfied), then 
			\begin{align}
				\DELTA(\mu) = |\beta| - 2p^k \text{\quad and \quad} \exp(\Amu) = \left( \dfrac{|\alpha|}{2}p^k + p^k,\ \dfrac{|\alpha|}{2}p^k + |\beta|-p^k \right);
			\end{align}
			\item \label{ae} If $\alpha \in \LAMBDA_\odd$ and $p^k - \beta_i > \dfrac{|p^k\bm{1} - \beta|}{2}$ (\ref{l21v} is satisfied), then
			\begin{align}
				&\DELTA(\mu) =  - \beta_i + \beta_{i+1} + \beta_{i+2} - p^k \\
                &\text{and \quad} \exp(\Amu) = \left( \dfrac{|\alpha|+1}{2}p^k + \beta_i,\ \dfrac{|\alpha|-1}{2}p^k + \beta_{i+1} + \beta_{i+2} \right);
			\end{align}
			\item \label{af} If $\alpha \in \LAMBDA_\odd$ and $\dfrac{|p^k\bm{1} - \beta|}{2} > p^k$ (\ref{l21vi} is satisfied), then
			\begin{align}
				\DELTA(\mu) =  p^k - |\beta| \text{\quad and \quad} \exp(\Amu) = \left( \dfrac{|\alpha|-1}{2}p^k + |\beta|,\ \dfrac{|\alpha|+1}{2}p^k \right).
			\end{align}
		\end{enumerate}
	\end{enumerate}
\end{theorem}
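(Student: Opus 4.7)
The plan is to split on the value of $k$: when $\DELTA(\mu)>0$ I would locate the connected component of $\LAMBDA^\bal_{>0}$ containing $\mu$ and identify its center $\eta$, after which the formula $\DELTA(\mu)=\DELTA(\eta)-|\mu-\eta|$ from \Cref{CCThm} and $\exp(\A,\mu)=\bigl((|\mu|-\DELTA(\mu))/2,\,(|\mu|+\DELTA(\mu))/2\bigr)$ immediately yield the exponents; when $\DELTA(\mu)=0$ the exponents are forced. Before the case analysis I would first verify that $k$ is well defined. The upper bound $p^m\le|\mu|/2$ appearing in its definition is in fact automatic: the smallest element of $\LAMBDA^\bal_\odd$ for a $3$-arrangement is $(1,1,1)$, so every $\zeta\in p^m\LAMBDA^\bal_\odd$ satisfies $|\zeta|\ge 3p^m$, and the triangle inequality together with $|\mu|\le 2p^m$ gives $|\mu-\zeta|\ge p^m$, ruling out $\mu\in B(p^m\LAMBDA^\bal_\odd,p^m)$ whenever $p^m>|\mu|/2$. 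In particular $k$ is finite.

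The case $k=0$ follows directly from the characterizations already on hand. If $\mu\in\LAMBDA^\bal_\odd$, the maximality of $k=0$ rules out $\mu\in B(p^m\LAMBDA^\bal_\odd,p^m)$ for every $m>0$, so $\mu\in Z(1)$ by \Cref{Z(1)} and $\DELTA(\mu)=1$. If instead $\mu\in\LAMBDA^\bal_\even$, then additionally $\mu\notin\LAMBDA^\bal_\odd=B(\LAMBDA^\bal_\odd,1)$, whence $\mu\in\DELTA^{-1}(0)$ by \Cref{c211}.

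The main work is the case $k>0$. Since $\mu\in B(p^k\zeta',p^k)$ for some $\zeta'\in\LAMBDA^\bal_\odd$ and $\DELTA(p^k\zeta')=p^k\DELTA(\zeta')\ge p^k$ by \Cref{self-similarity} \ref{ss1}, \Cref{c3} places $\mu$ and $p^k\zeta'$ in a common connected component $C$. By \Cref{thmcc}, $C=B(\eta,p^j)$ for its center $\eta\in p^j\LAMBDA^\bal_\odd$; the two-sided bound $j\le k$ (maximality of $k$, since $\mu\in B(p^j\LAMBDA^\bal_\odd,p^j)$) and $j\ge k$ (from $\DELTA(p^k\zeta')=p^j-|p^k\zeta'-\eta|\le p^j$ combined with $\DELTA(p^k\zeta')\ge p^k$) forces $j=k$. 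I would then invoke \Cref{l21} \ref{l212} to select the correct subcase among \ref{l21iii}--\ref{l21vi} and read off an explicit candidate $\zeta_0\in p^k\LAMBDA_\odd$ with $|\mu-\zeta_0|<p^k$; e.g.\ $\zeta_0=p^k(\alpha+\bm{e}_i)$ in \ref{l21iii} and $\zeta_0=p^k\alpha$ in \ref{l21vi}. A parity argument then pins down $\eta=\zeta_0$: if $\xi_1,\xi_2\in p^k\LAMBDA_\odd$ both satisfy $|\mu-\xi_i|<p^k$, writing $\xi_i=p^k\xi_i'$ gives $|\xi_1'-\xi_2'|<2$, and since both $|\xi_i'|$ are odd, $\xi_1'=\xi_2'$. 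A direct computation of $|\mu-\eta|$ in each subcase—for instance $(p^k-\beta_i)+\beta_{i+1}+\beta_{i+2}$ in \ref{l21iii} and $3p^k-|\beta|$ in \ref{l21iv}—then yields the claimed formulas for $\DELTA(\mu)$ and $\exp(\A,\mu)$.

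The hardest step will be the uniqueness argument that pins down $\eta$ as the candidate $\zeta_0$ supplied by \Cref{l21}. The parity trick above is the cleanest route; the alternative of verifying directly that $\zeta_0/p^k\in Z(1)$ would require a subcase-by-subcase check of balancedness of $\alpha+\bm{e}_i$, $\alpha+\bm{1}$, $\alpha+\bm{e}_{i+1}+\bm{e}_{i+2}$, or $\alpha$, which is messier and harder to carry out uniformly. Once $\eta$ is identified, everything else reduces to routine bookkeeping with $|\mu-\eta|$ and $|\mu|$.
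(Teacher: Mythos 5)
Your proposal is correct, and the overall strategy matches the paper's: reduce to identifying the center of the connected component containing $\mu$, then read off $\DELTA(\mu)$ from the radius formula in \Cref{CCThm} and translate to exponents. The case $k=0$ is handled identically in both.

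Where you diverge is in how the center is pinned down for $k>0$. The paper appeals directly to \Cref{c210}: it asserts $p^k(\alpha+\bm{e}_i)\in p^k\LAMBDA^\bal_\odd$ and then concludes $p^k(\alpha+\bm{e}_i)\in Z(p^k)$ ``since $\mu\notin\bigcup_{m>k}B(p^m\LAMBDA^\bal_\odd,p^m)$.'' That jump quietly uses two facts it does not spell out: (a) that $\alpha+\bm{e}_i$ (resp.\ $\alpha+\bm{1}$, $\alpha+\bm{e}_{i+1}+\bm{e}_{i+2}$, $\alpha$) is \emph{balanced}, and (b) that $\mu$ avoiding the larger balls implies the \emph{candidate center} avoids them too, which is not an immediate triangle-inequality consequence. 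Your route goes through \Cref{CCThm}/\Cref{thmcc} to produce the genuine center $\eta\in p^j\LAMBDA^\bal_\odd$ of the component, bounds $j$ above via maximality of $k$ and below via $\DELTA(p^k\zeta')\geq p^k$, and then uses the parity of $|\eta/p^k|$ and $|\zeta_0/p^k|$ to force $\eta=\zeta_0$. This buys you two things: the balancedness of $\zeta_0/p^k$ comes for free from \Cref{thmcc} (so no subcase check is needed), and membership $\zeta_0\notin\bigcup_{m>k}B(\cdot,\cdot)$ is never invoked directly. Your observation that the clause $p^m\leq|\mu|/2$ in the definition of $k$ is redundant (from $|\zeta|\geq 3p^m$ for $\zeta\in p^m\LAMBDA^\bal_\odd$) is also correct and worth stating, since it makes finiteness of $k$ transparent. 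The remaining bookkeeping of $|\mu-\eta|$ in each subcase agrees with the paper's computations.
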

\begin{proof}
	\ref{aa} If $p^m > \frac{|\mu|}{2}$, then $\mu \not\in B(p^m\LAMBDA_{\odd}^{\bal},p^m)$. 
    Since $k=0$, $\mu \not\in B(p^m\LAMBDA_{\odd}^{\bal},p^m)$ for any $m > 0$. 
    If $\mu \in \LAMBDA_{\odd}^{\bal}$, then $\DELTA(\mu)=1$ by \Cref{Z(1)}. 
    %It follows from \Cref{Z(1)}. 
	
	\ref{ab} This follows from \Cref{c211} by the same argument as above.
	
	\ref{ac} By \Cref{l21}, $\mu$ belongs to $B(p^k(\alpha + \bm{e}_i),\, p^k)$.
	Since $p^k(\alpha + \bm{e}_i) \in p^k\LAMBDA_\odd^\bal$ and $\mu \not\in \bigcup_{m>k}B(p^m\LAMBDA^\bal_\odd,\, p^m)$, we have $p^k(\alpha + \bm{e}_i) \in Z(p^k)$ by \Cref{c210}.
	Hence
	\begin{align}
		\DELTA(\mu) &= \DELTA(p^k(\alpha + \bm{e}_i)) - |\mu - p^k(\alpha + \bm{e}_i)|\\
		&= p^k - |\beta - p^k\bm{e}_i|\\
		&= p^k - (p^k - \beta_i + \beta_{i+1} + \beta_{i+2})\\
		&= \beta_i - \beta_{i+1} - \beta_{i+2}.
	\end{align}
	
	\ref{ad} By \Cref{l21}, $\mu$ belongs to $B(p^k(\alpha + \bm{1}),\, p^k)$.
	As above, we have $p^k(\alpha + \bm{1}) \in Z(p^k)$.
	Hence
	\begin{align}
		\DELTA(\mu) &= \DELTA(p^k(\alpha + \bm{1})) - |\mu - p^k(\alpha + \bm{1})|\\
		&= p^k - |\beta - p^k\bm{1}|\\
		&= p^k - (3p^k - |\beta|)\\
		&= |\beta| - 2p^k.
	\end{align}
	
	\ref{ae} By \Cref{l21}, $\mu$ belongs to $B(p^k(\alpha + \bm{e}_{i+1} + \bm{e}_{i+2}),\, p^k)$.
	As above, we have $p^k(\alpha + \bm{e}_{i+1} + \bm{e}_{i+2}) \in Z(p^k)$.
	Hence
	\begin{align}
		\DELTA(\mu) &= \DELTA(p^k(\alpha + \bm{e}_{i+1} + \bm{e}_{i+2})) - |\mu - p^k(\alpha + \bm{e}_{i+1} + \bm{e}_{i+2})|\\
		&= p^k - |\beta - p^k(\bm{e}_{i+1} +\bm{e}_{i+2})|\\
		&= p^k - (2p^k + \beta_i - \beta_{i+1} - \beta_{i+2})\\
		&= -\beta_i + \beta_{i+1} + \beta_{i+2} - p^k.
	\end{align}
	
	\ref{af} By \Cref{l21}, $\mu$ belongs to $B(p^k\alpha,\, p^k)$.
	As above, we have $p^k\alpha \in Z(p^k)$.
	Hence
	\begin{align}
		\DELTA(\mu) &= \DELTA(p^k\alpha) - |\mu - p^k\alpha|\\
		&= p^k - |\beta|.
	\end{align}
\end{proof}

\begin{example} \label{eg31}
Consider the case $p=3$ and $\mu=(41,52,31) \in \LAMBDA^\bal$. 
Then $k=3$ and $p^{k}=3^{3} = 27$. 
\begin{align}
(41,52,31) = 27(1,1,1) + (14,25,4). 
\end{align}
Thus $\alpha = (1,1,1) \in \LAMBDA_\odd$ and $\beta = (14,25,4)$. 
Moreover, 
\begin{align}
p^{3}\boldsymbol{1}-\beta &= (27,27,27) - (14,25,4) = (13,2,23), \\
\dfrac{|p^{3}\boldsymbol{1}-\beta|}{2} &= \dfrac{38}{2}=19 < 23 = p^{3}-\beta_{3}. 
\end{align}
Therefore
\begin{align}
\DELTA(\mu) &= -\beta_{3}+\beta_{1}+\beta_{2}-p^{3} = -4+14+25-27 = 8,
\end{align}
and 
\begin{align}
\exp(\A,\mu) &= \left( \dfrac{|\alpha|+1}{2}p^3 + \beta_3,\ \dfrac{|\alpha|-1}{2}p^3 + \beta_{1} + \beta_{2}\right) \\ 
&= (2\cdot 27 + 4,\, 27 + 14 + 25)\\ 
&= (58,66). 
\end{align}
In this case, $\mu$ belongs to the connected component with radius $27$ and center $(54,54,27)$.

\end{example}

\section{Periodicity and symmetry of exponents} \label{section:plane}

\subsection{Periodicity}

For $d > 0$, define a map $\PSI_d:\Der_S \longrightarrow \Der_S$ by
\begin{align}
    \PSI_d : \theta = \theta(x)\partial_x + \theta(y)\partial_y \ \longmapsto \  \theta(x)x^{p^d}\partial_x - \theta(y)y^{p^d}\partial_y.
\end{align}

\begin{lemma}
    Let 
    %$\theta \in S$ and 
    $\mu \in \LAMBDA$.
    If an integer $d > 0$ satisfies $\mu_3 \leq p^d$, then 
    \begin{align}
        \PSI_d\bigl(D(\Amu)\bigr) \subseteq D(\A,\, \mu + (p^d,p^d,0)).
    \end{align}
\end{lemma}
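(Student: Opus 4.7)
The plan is to directly verify the three contact conditions for $\PSI_d(\theta)$ against the new multiplicity $\mu + (p^d, p^d, 0)$, using $\alpha_{H_1}=x$, $\alpha_{H_2}=y$, $\alpha_{H_3}=x+y$. Write $f \coloneqq \theta(x)$ and $g \coloneqq \theta(y)$ for $\theta \in D(\A,\mu)$, so that $\PSI_d(\theta) = fx^{p^d}\partial_x - gy^{p^d}\partial_y$.

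The conditions on $H_1$ and $H_2$ are immediate: $\PSI_d(\theta)(x) = fx^{p^d}$ lies in $x^{\mu_1 + p^d}S$ because $f \in x^{\mu_1}S$, and similarly $\PSI_d(\theta)(y) = -gy^{p^d} \in y^{\mu_2 + p^d}S$ because $g \in y^{\mu_2}S$. So the only nontrivial step is the $H_3$ condition, namely that
\begin{align}
\PSI_d(\theta)(x+y) = fx^{p^d} - gy^{p^d} \in (x+y)^{\mu_3}S.
\end{align}

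The key tool is the Frobenius identity $x^{p^d} + y^{p^d} = (x+y)^{p^d}$, valid in characteristic $p$. Substituting $x^{p^d} = (x+y)^{p^d} - y^{p^d}$ yields
\begin{align}
fx^{p^d} - gy^{p^d} = f(x+y)^{p^d} - (f+g)y^{p^d} = f(x+y)^{p^d} - y^{p^d}\theta(x+y).
\end{align}
Now $\theta(x+y) \in (x+y)^{\mu_3}S$ by hypothesis, and $(x+y)^{p^d} \in (x+y)^{\mu_3}S$ precisely because $\mu_3 \leq p^d$. Therefore both summands lie in $(x+y)^{\mu_3}S$, completing the verification.

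There is no real obstacle here: the statement is engineered so that the Frobenius identity converts the sign-flip in $\PSI_d$ into an exact multiple of $(x+y)^{p^d}$ plus a term proportional to $\theta(x+y)$. The hypothesis $\mu_3 \leq p^d$ is used exactly once, to ensure $(x+y)^{p^d}$ is divisible by $(x+y)^{\mu_3}$, and this explains why the bound is sharp for this construction.
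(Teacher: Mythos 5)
Your proof is correct and follows essentially the same route as the paper: isolate the $H_3$ condition, apply the Frobenius identity $x^{p^d}+y^{p^d}=(x+y)^{p^d}$ to rewrite $\PSI_d(\theta)(x+y)$ as $\theta(x)(x+y)^{p^d} - y^{p^d}\theta(x+y)$, and observe both terms are divisible by $(x+y)^{\mu_3}$ using the hypothesis $\mu_3 \leq p^d$. The only cosmetic difference is whether one substitutes $x^{p^d}=(x+y)^{p^d}-y^{p^d}$ directly or adds and subtracts $\theta(x)y^{p^d}$; the resulting decomposition is identical.
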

\begin{proof}
    Suppose that $\theta \in D(\Amu)$.
    Then 
    $\PSI_d(\theta)(x) = \theta(x)x^{p^d} \in x^{\mu_1+p^d}S$, $\PSI_d(\theta)(y) = -\theta(y)y^{p^d} \in y^{\mu_2+p^d}S$.
    Therefore
    \begin{align}
        \PSI_d(\theta)(x+y) &= \theta(x)x^{p^d} - \theta(y)y^{p^d}\\
        &= \theta(x)x^{p^d} + \theta(x)y^{p^d} - \theta(x)y^{p^d} - \theta(y)y^{p^d}\\
        &= \theta(x)\Bigl(x^{p^d} + y^{p^d}\Bigr) - \Bigl(\theta(x) + \theta(y)\Bigr)y^{p^d}\\
        &= \theta(x)(x+y)^{p^d} - \Bigl(\theta(x) + \theta(y)\Bigr)y^{p^d},
    \end{align}
    and hence $\PSI_d(\theta)(x+y) \in (x+y)^{\mu_3}S$.
    Thus $\PSI_d(\theta) \in D(\A,\, \mu + (p^d, p^d, 0))$.
\end{proof}

The map $\PSI_d$ preserves a basis as follows.

\begin{theorem}[Periodicity theorem] \label{Period2}
    Let $\mu \in \LAMBDA$.
    Suppose that $d >0$ satisfies $\mu_3 \leq p^d$.
    Then the following are equivalent:
    \begin{enumerate}[label=(\roman*)]
        \item $\{\theta,\theta'\}$ is a basis for $D(\Amu)$;
        \item $\{\PSI_d(\theta),\, \PSI_d(\theta')\}$ is a basis for $D(\A,\, \mu + (p^d, p^d, 0))$.
    \end{enumerate}
    Furthermore, we have $\DELTA(\mu) = \DELTA(\mu + (p^d, p^d, 0))$.
\end{theorem}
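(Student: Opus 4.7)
The plan is to apply Saito's criterion (\Cref{Saito-Ziegler}) in both directions, using a determinant identity for $\PSI_d$ together with the preceding lemma providing the containment $\PSI_d(D(\Amu)) \subseteq D(\A,\mu+(p^d,p^d,0))$. The key identity, obtained by direct expansion after writing $\theta = f\partial_x + g\partial_y$ and $\theta' = f'\partial_x + g'\partial_y$, is
\begin{align*}
\det M(\PSI_d(\theta), \PSI_d(\theta')) = -x^{p^d} y^{p^d} \det M(\theta, \theta').
\end{align*}
Consequently, $\det M(\theta, \theta') \doteq Q(\A, \mu) = x^{\mu_1} y^{\mu_2} (x+y)^{\mu_3}$ if and only if $\det M(\PSI_d(\theta), \PSI_d(\theta')) \doteq Q(\A, \mu + (p^d, p^d, 0))$.

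For the direction (i) $\Rightarrow$ (ii), combine this identity with the lemma: if $\{\theta,\theta'\}$ is a basis of $D(\Amu)$, then $\PSI_d(\theta), \PSI_d(\theta') \in D(\A, \mu+(p^d,p^d,0))$ and the determinant condition of Saito's criterion is satisfied, yielding a basis. For the reverse direction (ii) $\Rightarrow$ (i), the identity gives $\det M(\theta, \theta') \doteq Q(\A, \mu)$, so by Saito's criterion it remains only to show $\theta, \theta' \in D(\Amu)$. Since $S$ is a UFD, $\theta(x)x^{p^d} \in x^{\mu_1+p^d}S$ forces $\theta(x) \in x^{\mu_1}S$, and similarly $\theta(y) \in y^{\mu_2}S$.

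The delicate step is the third contact condition. From
\begin{align*}
\PSI_d(\theta)(x+y) = \theta(x)(x+y)^{p^d} - \theta(x+y)\,y^{p^d} \in (x+y)^{\mu_3}S,
\end{align*}
the hypothesis $\mu_3 \leq p^d$ ensures $(x+y)^{\mu_3} \mid (x+y)^{p^d}$, so $\theta(x)(x+y)^{p^d} \in (x+y)^{\mu_3}S$. Hence $\theta(x+y)\,y^{p^d} \in (x+y)^{\mu_3}S$, and coprimality of $y^{p^d}$ with $(x+y)^{\mu_3}$ in $S$ yields $\theta(x+y) \in (x+y)^{\mu_3}S$, so $\theta \in D(\Amu)$; the same argument applies to $\theta'$. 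Finally, since $\PSI_d$ sends a homogeneous vector field of degree $r$ to one of degree $r+p^d$, the equivalence of bases implies $\exp(\A,\mu+(p^d,p^d,0)) = \exp(\Amu) + (p^d,p^d)$ componentwise, and $\DELTA$ is preserved. The main obstacle is the third contact condition in (ii) $\Rightarrow$ (i), where $\mu_3 \leq p^d$ is used in an essential way via the divisibility $(x+y)^{\mu_3} \mid (x+y)^{p^d}$.
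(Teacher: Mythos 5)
Your proof takes the same route as the paper's: the determinant identity $\det M(\PSI_d\theta,\PSI_d\theta') = -x^{p^d}y^{p^d}\det M(\theta,\theta')$ combined with Saito's criterion in both directions, and the preceding lemma for (i)$\Rightarrow$(ii). You are, if anything, slightly more careful than the paper on (ii)$\Rightarrow$(i): Saito's criterion presupposes $\theta,\theta'\in D(\A,\mu)$, a membership the paper's proof does not explicitly verify (the lemma only gives the containment $\PSI_d(D(\A,\mu))\subseteq D(\A,\mu+(p^d,p^d,0))$, not the converse); your divisibility argument via $\PSI_d(\theta)(x+y)=\theta(x)(x+y)^{p^d}-\theta(x+y)y^{p^d}$, the hypothesis $\mu_3\le p^d$, and coprimality of $y$ with $x+y$ correctly supplies this missing step.
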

\begin{proof}
    Suppose that $\{\theta,\theta'\}$ is a basis for $D(\Amu)$.
    Then $M(\theta,\theta') \doteq Q(\A,\mu)$ (Theorem \ref{Saito-Ziegler}).
    We have
    \begin{align}
        M(\PSI_d(\theta),\, \PSI_d(\theta'))
         &= -\theta(x)x^{p^d}\theta'(y)y^{p^d} + \theta(y)y^{p^d}\theta'(x)x^{p^d}\\
         &= -x^{p^d}y^{p^d}M(\theta,\theta')\\
         &\doteq x^{p^d}y^{p^d}Q(\Amu)\\
         &\doteq Q(\A,\, \mu + (p^d, p^d, 0)),
    \end{align}
    and hence $\{\PSI_d(\theta),\, \PSI_d(\theta')\}$ is a basis for $D(\A,\, \mu + (p^d, p^d, 0))$.

    Suppose that $\{\PSI_d(\theta),\, \PSI_d(\theta')\}$ is a basis for $D(\A,\, \mu + (p^d, p^d, 0))$.
    Then 
    \begin{align}
	   M(\PSI_d(\theta),\, \PSI_d(\theta')) \doteq Q(\A,\, \mu + (p^d, p^d, 0)) = x^{p^d}y^{p^d}Q(\Amu).
    \end{align}
    Since
    \begin{align}
        \theta = \dfrac{\PSI_d(\theta)(x)}{x^{p^d}}\partial_x - \dfrac{\PSI_d(\theta)(y)}{y^{p^d}}\partial_y,\quad 
        \theta' = \dfrac{\PSI_d(\theta')(x)}{x^{p^d}}\partial_x - \dfrac{\PSI_d(\theta')(y)}{y^{p^d}}\partial_y,
    \end{align}
    we have
    \begin{align}
        M(\theta,\theta')
         &= -\dfrac{\PSI_d(\theta)(x)}{x^{p^d}}\dfrac{\PSI_d(\theta')(y)}{y^{p^d}} + \dfrac{\PSI_d(\theta)(y)}{y^{p^d}}\dfrac{\PSI_d(\theta')(x)}{x^{p^d}}\\
         &= -\dfrac{M(\PSI_d(\theta),\, \PSI_d(\theta'))}{x^{p^d}y^{p^d}}\\
         &\doteq Q(\A,\mu).
    \end{align}
    Hence $\{\theta,\theta'\}$ is a basis for $D(\Amu)$.
\end{proof}

\begin{example}\label{eg:334p5}
Let $p = 5$ and $\mu = (3,3,4)$.
Then
\begin{align}
    \theta_\mu &= (x^5 - x^4y + x^3y^2)\partial_x - (x^2y^3 - xy^4)\partial_y,\\
    \theta_\mu' &= x^5\partial_x + y^5\partial_y
\end{align}
form a basis for $D(\A,\mu)$  (See \Cref{eg334}).
Define $\nu \coloneqq \mu + (5,5,0) = (8,8,4)$.
\Cref{Period2} implies that $\DELTA(\nu) = \DELTA(\mu) = 0$ and 
\begin{align}
    \theta_\nu &= \PSI_1(\theta_\mu) = (x^{10} - x^9y + x^8y^2)\partial_x + (x^2y^8 - xy^9)\partial_y,\\
    \theta_\nu' &= \PSI_1(\theta_\mu') = x^{10}\partial_x - y^{10}\partial_y
\end{align}
form a basis for $D(\A,\nu)$.
In fact, we have
\begin{align}
    \det{M(\theta_\nu,\theta_\nu')} = x^{12}y^8 - x^{11}y^9 + x^{10}y^{10} - x^9y^{11} + x^8y^{12} = x^8y^8(x+y)^4.
\end{align}
\end{example}

\subsection{Symmetry}

Let $d \in \mathbb{Z}_{>0}$, define 
\begin{align}
    \LAMBDA_{\leq p^d} \coloneqq \Set{ (\mu_1,\mu_2,\mu_3) \in \LAMBDA | \mu_1,\mu_2,\mu_3 \leq p^d }%,\quad \LAMBDA_{\leq p^d}^\bal \coloneqq \LAMBDA^\bal \cap \LAMBDA_{\leq p^d}.
\end{align}
For $\mu = (\mu_1,\mu_2,\mu_3) \in \LAMBDA_{\leq p^d}$, define $\mu^\vee \in \LAMBDA_{\leq p^d}$ by 
\begin{align}
    \mu^\vee = (\mu_1^\vee,\mu_2^\vee,\mu_3^\vee) \coloneqq (p^d-\mu_1,\, p^d-\mu_2,\, \mu_3).
\end{align}
It is clear that $(\mu^\vee)^\vee = \mu$ for any $\mu \in \LAMBDA_{\leq p^d}$.

Let $\mu \in\LAMBDA_{\leq p^d}$.
We can write $\theta \in D(\A,\mu)$ as
\begin{align}
    \theta = f x^{\mu_1}\partial_x + g y^{\mu_2}\partial_y \in D(\A,\mu),
\end{align}
where $f $, $g  \in S$.
Define 
\begin{align}
    \theta^\vee \coloneqq \dfrac{g y^{\mu_2}x^{p^d}\partial_x - f x^{\mu_1}y^{p^d}\partial_y}{x^{\mu_1}y^{\mu_2}} = g x^{\mu_1^\vee}\partial_x - f y^{\mu_2^\vee}\partial_y.
\end{align}
Then $(\theta^\vee)^\vee = -\theta$ holds.

\begin{lemma}
Let $\theta = f x^{\mu_1}\partial_x + g y^{\mu_2}\partial_y \in D(\A,\mu)$.
Then $\theta^\vee \in D(\A,\mu^\vee)$.
\end{lemma}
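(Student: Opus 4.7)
The plan is to verify the three contact conditions for $\theta^\vee \in D(\A,\mu^\vee)$ corresponding to the hyperplanes $H_1=\ker x$, $H_2=\ker y$, and $H_3=\ker(x+y)$. The first two are built into the formula: from $\theta^\vee = gx^{\mu_1^\vee}\partial_x - fy^{\mu_2^\vee}\partial_y$ one reads off $\theta^\vee(x) = gx^{\mu_1^\vee} \in x^{\mu_1^\vee}S$ and $\theta^\vee(y) = -fy^{\mu_2^\vee} \in y^{\mu_2^\vee}S$, so only the contact along $H_3$ requires work.

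For $H_3$, the task is to show $\theta^\vee(x+y) = gx^{\mu_1^\vee} - fy^{\mu_2^\vee}$ lies in $(x+y)^{\mu_3}S$. The idea is to multiply by $x^{\mu_1}y^{\mu_2}$, which is coprime to $(x+y)$ in $S$, and reduce the problem to an expression built from $\theta(x+y)$. Concretely, $x^{\mu_1}y^{\mu_2}\theta^\vee(x+y) = gx^{p^d}y^{\mu_2} - fx^{\mu_1}y^{p^d}$, and I would then invoke the Frobenius identity $x^{p^d}+y^{p^d} = (x+y)^{p^d}$, valid since $\Char \mathbb{F}=p$, to rewrite $fx^{\mu_1}y^{p^d} = fx^{\mu_1}(x+y)^{p^d} - fx^{\mu_1+p^d}$. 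Substituting gives
\begin{align}
x^{\mu_1}y^{\mu_2}\theta^\vee(x+y) = x^{p^d}\bigl(fx^{\mu_1}+gy^{\mu_2}\bigr) - fx^{\mu_1}(x+y)^{p^d} = x^{p^d}\theta(x+y) - fx^{\mu_1}(x+y)^{p^d}.
\end{align}

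Because $\theta \in D(\A,\mu)$ we have $(x+y)^{\mu_3} \mid \theta(x+y)$, and since $\mu_3 \leq p^d$ by the hypothesis $\mu \in \LAMBDA_{\leq p^d}$, both terms on the right-hand side are divisible by $(x+y)^{\mu_3}$. Therefore $(x+y)^{\mu_3} \mid x^{\mu_1}y^{\mu_2}\theta^\vee(x+y)$ in the UFD $S$, and coprimality of $x^{\mu_1}y^{\mu_2}$ and $(x+y)^{\mu_3}$ yields $(x+y)^{\mu_3} \mid \theta^\vee(x+y)$, establishing the contact condition along $H_3$.

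The only non-routine step is recognizing that the Frobenius identity is what allows one to convert the mixed expression $gx^{p^d}y^{\mu_2} - fx^{\mu_1}y^{p^d}$ into a combination of $\theta(x+y)$ and a multiple of $(x+y)^{p^d}$; the bound $\mu_3 \leq p^d$ is used precisely here to ensure the $(x+y)^{p^d}$ term has sufficiently high contact order. All remaining manipulations are elementary and live entirely in $S$.
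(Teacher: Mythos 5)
Your proof is correct, and it rests on the same central idea as the paper's: after clearing the denominator $x^{\mu_1}y^{\mu_2}$, use the Frobenius identity $(x+y)^{p^d}=x^{p^d}+y^{p^d}$ to complete the cross term into a combination of $\theta(x+y)$ and a multiple of $(x+y)^{p^d}$, both of which carry a factor of $(x+y)^{\mu_3}$ (the latter because $\mu_3\le p^d$, supplied by the ambient hypothesis $\mu\in\LAMBDA_{\le p^d}$). The only real difference is in the endgame: the paper writes $\theta^\vee(x+y)$ as a rational expression $(x+y)^{\mu_3}\cdot\frac{hx^{p^d-\mu_1}-f(x+y)^{p^d-\mu_3}}{y^{\mu_2}}$ and then carries out a second, symmetric computation with the roles of $x$ and $y$ swapped, equating the two to deduce that the numerator is divisible by $y^{\mu_2}$; you instead keep everything in $S$ by working with $x^{\mu_1}y^{\mu_2}\theta^\vee(x+y)$ and invoke coprimality of $x^{\mu_1}y^{\mu_2}$ and $x+y$ in the UFD $S$ to strip the extraneous factor at the end. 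Your version is slightly shorter and avoids the need for the parallel computation, at the cost of explicitly appealing to unique factorization — a fair trade, and both arguments are sound.
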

\begin{proof}
    It is clear that $\theta^\vee(x) \in x^{\mu_1^\vee}S$ and $\theta^\vee(y) \in y^{\mu_2^\vee}S$.
    Since $\theta \in D(\A,\mu)$,
    there exists $h \in S$ such that
    \begin{align}
        %f(y,x) = y^{\mu_1}\bar{f} ,\quad 
        %g(y,x) = x^{\mu_2}\bar{g} ,\label{fg}\\
        f x^{\mu_1} + g y^{\mu_2} = h (x+y)^{\mu_3}.\label{h}
    \end{align}
    Then
    \begin{align}
        \theta^\vee(x+y) &= \dfrac{g y^{\mu_2}x^{p^d} - f x^{\mu_1}y^{p^d}}{x^{\mu_1}y^{\mu_2}} \\
        &= \dfrac{g y^{\mu_2}x^{p^d} + f x^{\mu_1}x^{p^d} - f x^{\mu_1}x^{p^d}- f x^{\mu_1}y^{p^d}}{x^{\mu_1}y^{\mu_2}}\\
        &= \dfrac{g y^{\mu_2}x^{p^d} + f x^{\mu_1}x^{p^d} - f x^{\mu_1}x^{p^d}- f x^{\mu_1}y^{p^d}}{x^{\mu_1}y^{\mu_2}}\\
        &= \dfrac{\bigl(g y^{\mu_2} + f x^{\mu_1}\bigr)x^{p^d} - f x^{\mu_1}(x^{p^d} + y^{p^d})}{x^{\mu_1}y^{\mu_2}}\\
        &= \dfrac{h (x+y)^{\mu_3}x^{p^d} - f x^{\mu_1}(x + y)^{p^d}}{x^{\mu_1}y^{\mu_2}}\\
        &= (x + y)^{\mu_3} \cdot \dfrac{h x^{p^d} - f x^{\mu_1}(x + y)^{p^d-\mu_3}}{x^{\mu_1}y^{\mu_2}}\\
        &= (x + y)^{\mu_3} \cdot \dfrac{h x^{p^d-\mu_1} - f (x + y)^{p^d-\mu_3}}{y^{\mu_2}},
    \end{align}
    and 
    \begin{align}
        \theta^\vee(x+y) &= \dfrac{g y^{\mu_2}x^{p^d} - f x^{\mu_1}y^{p^d}}{x^{\mu_1}y^{\mu_2}} \\
        &= \dfrac{g y^{\mu_2}x^{p^d} + g y^{\mu_2}y^{p^d} - g y^{\mu_2}y^{p^d} - f x^{\mu_1}y^{p^d}}{x^{\mu_1}y^{\mu_2}}\\
        &= \dfrac{g y^{\mu_2}(x^{p^d} + y^{p^d}) - \bigl(g y^{\mu_2} + f x^{\mu_1} \bigr) y^{p^d}}{x^{\mu_1}y^{\mu_2}}\\
        &= \dfrac{g y^{\mu_2}(x + y)^{p^d} - h (x + y)^{\mu_3} y^{p^d}}{x^{\mu_1}y^{\mu_2}}\\
        &= (x + y)^{\mu_3} \cdot \dfrac{g y^{\mu_2}(x + y)^{p^d - \mu_3} - h  y^{p^d}}{x^{\mu_1}y^{\mu_2}}\\
        &= (x + y)^{\mu_3} \cdot \dfrac{g (x + y)^{p^d - \mu_3} - h  y^{p^d-\mu_2}}{x^{\mu_1}}.
    \end{align}
    Thus we have
    \begin{align}
        x^{\mu_1}\bigl( h x^{p^d - \mu_1} - f (x + y)^{p^d - \mu_3} \bigr) = y^{\mu_2} \bigl( g (x + y)^{p^d - \mu_3} - h y^{p^d - \mu_2} \bigr)
    \end{align}
    This implies that $h x^{p^d - \mu_1} - f (x + y)^{p^d - \mu_3}$ can be divided by $y^{\mu_2}$.
    Therefore $\theta^\vee(x+y) \in (x+y)^{\mu_3}S$.
\end{proof}

The operator $\vee$ preserves a basis as follows.
\begin{theorem}\label{veebasis}
    Let $\mu \in \LAMBDA_{\leq p^d}$.
    Then the following are equivalent:
    \begin{enumerate}[label = (\roman*)]
        \item $\{\theta ,\theta'\}$ is a basis for $D(\A,\mu)$;
        \item $\{\theta^\vee, {\theta'}^\vee\}$ is a basis for $D(\A,\mu^\vee)$.
    \end{enumerate}
    Furthermore, we have $\DELTA(\mu) = \DELTA(\mu^\vee)$.
\end{theorem}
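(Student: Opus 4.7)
The plan is to reduce the theorem to Saito's criterion (\Cref{Saito-Ziegler}) through a single determinant identity. Writing the two given vector fields in the form $\theta = fx^{\mu_1}\partial_x + gy^{\mu_2}\partial_y$ and $\theta' = f'x^{\mu_1}\partial_x + g'y^{\mu_2}\partial_y$, the definition of $\vee$ yields $\theta^\vee = gx^{\mu_1^\vee}\partial_x - fy^{\mu_2^\vee}\partial_y$ and ${\theta'}^\vee = g'x^{\mu_1^\vee}\partial_x - f'y^{\mu_2^\vee}\partial_y$. A direct $2\times 2$ expansion gives
\begin{align}
\det M(\theta,\theta') &= x^{\mu_1}y^{\mu_2}(fg'-gf'),\\
\det M(\theta^\vee,{\theta'}^\vee) &= x^{\mu_1^\vee}y^{\mu_2^\vee}(fg'-gf'),
\end{align}
so the two determinants differ only by the factor $x^{\mu_1^\vee-\mu_1}y^{\mu_2^\vee-\mu_2}$, which is exactly the ratio $Q(\A,\mu^\vee)/Q(\A,\mu)$.

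With this identity in hand, the equivalence (i) $\Leftrightarrow$ (ii) is immediate from Saito's criterion: the condition $\det M(\theta,\theta') \doteq Q(\A,\mu)$ is equivalent to $fg'-gf' \doteq (x+y)^{\mu_3}$, which is in turn equivalent to $\det M(\theta^\vee,{\theta'}^\vee) \doteq Q(\A,\mu^\vee)$. The containment $\theta^\vee, {\theta'}^\vee \in D(\A,\mu^\vee)$ needed to apply the criterion is exactly what the lemma immediately preceding the theorem provides.

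For the identity $\DELTA(\mu) = \DELTA(\mu^\vee)$, I argue by degree. If $\theta$ is homogeneous of degree $e$, then $f$ has degree $e-\mu_1$ and $g$ has degree $e-\mu_2$, so each summand of $\theta^\vee$ has degree $e + p^d - \mu_1 - \mu_2$. Since this shift is a constant independent of $e$, the two exponents of $(\A,\mu^\vee)$ are obtained from those of $(\A,\mu)$ by a uniform translation, and their absolute difference is preserved. The only subtle point in the whole argument is the sign in the definition of $\theta^\vee$: it is exactly that sign which makes the cross-terms in both determinants collapse to the same polynomial $fg'-gf'$, rather than to expressions differing by a sign; without it the two determinants would not be proportional and the reduction to Saito's criterion would fail.
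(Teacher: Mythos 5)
Your proposal is correct and follows essentially the same route as the paper: write $\theta,\theta'$ in the form $f x^{\mu_1}\partial_x + g y^{\mu_2}\partial_y$, observe that both $\det M(\theta,\theta')$ and $\det M(\theta^\vee,{\theta'}^\vee)$ equal a monomial times the same polynomial $fg'-gf'$, and conclude via Saito's criterion together with the preceding lemma establishing $\theta^\vee\in D(\A,\mu^\vee)$. Your explicit degree-shift argument for $\DELTA(\mu)=\DELTA(\mu^\vee)$ — each exponent is translated by the constant $p^d-\mu_1-\mu_2$, so the difference is unchanged — correctly fills in a step the paper leaves implicit.
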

\begin{proof}
    Write $\theta$ and $\theta'$ as 
    \begin{align}
        \theta = f  x^{\mu_1} \partial_x + g  y^{\mu_2} \partial_y,\qquad 
        \theta' = f'  x^{\mu_1} \partial_x + g'  y^{\mu_2} \partial_y.
    \end{align}
    If $\{\theta,\theta'\}$ is a basis for $D(\A,\mu)$, then 
    it follows from Saito's criterion (\Cref{Saito-Ziegler}) that
    \begin{align}
        &x^{\mu_1}y^{\mu_2}\bigl( f  \cdot g'  - f'  \cdot g  \bigr) \\
        & \quad = 
        f x^{\mu_1} \cdot g' y^{\mu_2} - f' x^{\mu_1} \cdot g y^{\mu_2}\\
        & \quad = \det{M(\theta,\theta')} \\
        & \quad \doteq x^{\mu_1}y^{\mu_2}(x+y)^{\mu_3}.
    \end{align}
    We have 
    \begin{align}
        \det{M(\theta^\vee,\, {\theta'}^\vee)}
        &= - g x^{\mu_1^\vee} \cdot f' y^{\mu_2^\vee} + g' x^{\mu_1^\vee} \cdot f y^{\mu_2^\vee}\\
        &= - x^{\mu_1^\vee}y^{\mu_2^\vee} \bigl( g  \cdot f'  - g'  \cdot f  \bigr)\\
        &\doteq x^{\mu_1^\vee}y^{\mu_2^\vee}(x + y)^{\mu_3}.
    \end{align}
    Hence, by Saito's criterion, $\{\theta^\vee, {\theta'}^\vee\}$ can be a basis for $D(\A,\mu^\vee)$.
    The proof is complete.
\end{proof}

\section{The basis with binomial coefficients} \label{section:gammam}

For $m \in \mathbb{Z}_{> 0}$, we express the base-$p$ expansion of $m$ as
\begin{align}
    m = \sum_{e = 0}^{\infty}c_e(m)p^e,
\end{align}
where $0 \leq c_e(m) < p$.
Define $s(m) \coloneqq \min\Set{ e \in \mathbb{Z}_{\geq 0} | c_e(m) \neq 0 }$.
For example, if $p = 3$ and $m = 16 = 1 \cdot 3^0 + 2 \cdot 3 + 1 \cdot 3^2$, then 
\begin{align}
    c_e(m) = \begin{cases*}
        2 & if $e = 1$;\\
        1 & if $e = 0$ or $2$;\\
        0 & otherwise,
    \end{cases*}
    \qquad s(m) = 0.
\end{align}

The following formula is known for the binomial coefficients.
\begin{lemma}[Lucas]
    For $m,j \in \mathbb{Z}_{\geq 0}$, the binomial coefficient $\binom{m}{j}$ has the following relation:
    \begin{align}
        \binom{m}{j} \equiv \prod_{e = 0}^\infty \binom{c_e(m)}{c_e(j)} \pmod{p},
    \end{align}
    where we use the convention that $\binom{a}{b} = 0$ if $b > a$.
    Hence $\binom{m}{j} \equiv 0 \pmod{p}$ if and only if $c_e(j) > c_e(m)$ for some $e \in \mathbb{Z}_{\geq 0}$.
\end{lemma}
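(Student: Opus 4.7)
The plan is to prove this classical identity of Lucas via generating functions in $\mathbb{F}_p[x]$, exploiting the Frobenius identity $(1+x)^p \equiv 1 + x^p \pmod{p}$. Iterating this congruence (by induction on $e$, using that raising to the $p$-th power is a ring homomorphism in characteristic $p$) yields $(1+x)^{p^e} \equiv 1 + x^{p^e} \pmod{p}$ for every $e \geq 0$.

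First, writing $m = \sum_{e=0}^{\infty} c_e(m) p^e$ as a finite sum, I would compute
\[
(1+x)^m \;=\; \prod_{e=0}^{\infty} \bigl((1+x)^{p^e}\bigr)^{c_e(m)} \;\equiv\; \prod_{e=0}^{\infty} \bigl(1 + x^{p^e}\bigr)^{c_e(m)} \pmod{p}.
\]
Expanding both sides via the ordinary binomial theorem, the coefficient of $x^j$ on the left-hand side is $\binom{m}{j}$, while on the right-hand side it equals
\[
\sum_{(k_e)} \prod_{e=0}^{\infty} \binom{c_e(m)}{k_e},
\]
where the sum runs over sequences $(k_e)_{e \geq 0}$ with $0 \leq k_e \leq c_e(m) < p$ and $\sum_e k_e p^e = j$.

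Next I would invoke uniqueness of the base-$p$ expansion: since $0 \leq k_e < p$ for every $e$, the condition $\sum_e k_e p^e = j$ forces $k_e = c_e(j)$. Hence the sum collapses to the single term $\prod_{e=0}^{\infty} \binom{c_e(m)}{c_e(j)}$, proving the congruence. The second assertion follows immediately: a product of integers in $\{0,1,\dots,p-1\}$ vanishes modulo $p$ if and only if some factor is itself zero, and $\binom{c_e(m)}{c_e(j)} = 0$ precisely when $c_e(j) > c_e(m)$ (using $c_e(m), c_e(j) < p$ to rule out accidental divisibility).

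There is no real obstacle here; the only points requiring a little care are the interpretation of the infinite product as an essentially finite one (since $c_e(m) = 0$ for all sufficiently large $e$) and the clean application of base-$p$ uniqueness, which is precisely what forces each $k_e$ to match $c_e(j)$.
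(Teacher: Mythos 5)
Your proof is correct and complete; it is the standard generating-function proof of Lucas' theorem, built on the Frobenius congruence $(1+x)^{p^e}\equiv 1+x^{p^e}\pmod p$ and uniqueness of base-$p$ digits. Note that the paper itself offers no proof for this lemma: it is stated as a classical result attributed to Lucas and used as a black box, so there is no in-paper argument to compare against. The one small presentational point worth tightening is the passage from $(1+x)^m$ to $\prod_e\bigl((1+x)^{p^e}\bigr)^{c_e(m)}$: this is an exact identity (not just a congruence) coming from $m=\sum_e c_e(m)p^e$, so you should make clear that the only step requiring reduction mod $p$ is replacing each $(1+x)^{p^e}$ by $1+x^{p^e}$.
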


\begin{table}[h]
\centering
{\renewcommand\arraystretch{1.5}
    \caption{Values of binomial coefficients for $m = 16$ and $p = 3$}
    
    \begin{tabular}{c|ccccccccccccccccc}
    $j$                     & 0 & 1 & 2 & 3 & 4 & 5 & 6 & 7 & 8 & 9 & 10 & 11 & 12 & 13 & 14 & 15 & 16 \\ \hline
    $\binom{m}{j} \pmod{p}$ & 1 & 1 & 0 & 2 & 2 & 0 & 1 & 1 & 0 & 1 & 1  & 0  & 2  & 2  & 0  & 1  & 1 
    \end{tabular}
    
    \label{table:binom}
    }
\end{table}

Let $m \in \mathbb{Z}_{>0}$ and
\begin{align}
    \LAMBDA(m) \coloneqq \Set{(\mu_1,\mu_2,\mu_3) \in \LAMBDA | \mu_3 = m}.
\end{align}
For $\mu \in \LAMBDA(m)$, define 
\begin{align}
        \psi_\mu \coloneqq \sum_{j = \mu_1}^{m}\binom{m}{j}x^jy^{m-j}\partial_x + \sum_{j=0}^{\mu_1-1}\binom{m}{j}x^jy^{m-j}\partial_y,\qquad 
        \psi'_\mu \coloneqq x^{\mu_1}y^{\mu_2}(\partial_y - \partial_x).
\end{align}
In particular, if $\mu_1 = 0$, then $\psi_\mu = (x+y)^m\partial_x$.
If $\mu_1 > m$, then $\psi_\mu = (x + y)^m\partial_y$.
We consider the set
\begin{align}
    \GAMMA(m) \coloneqq \Set{\mu \in \LAMBDA(m) |  \text{$\{\psi_\mu,\psi_\mu'\}$ is a basis for $D(\A,\mu)$}}.
\end{align}
Since 
\begin{align}
	\psi_\mu' \in D(\Amu),\quad \psi_\mu(x) \in x^{\mu_1}S,\quad \psi_\mu(x+y) = (x+y)^{m} \in (x+y)^{m}S
\end{align}
and 
\begin{align}
	\det{M(\psi_\mu,\psi_\mu')} = x^{\mu_1}y^{\mu_2}\sum_{j = \mu_1}^{m}\binom{m}{j}x^jy^{m-j} + x^{\mu_1}y^{\mu_2}\sum_{j=0}^{\mu_1-1}\binom{m}{j}x^jy^{m-j}
    = x^{\mu_1}y^{\mu_2}(x+y)^{m}
\end{align}
hold, $\mu \in \GAMMA(m)$ if and only if $\psi_\mu(y)$ is divisible by $y^{\mu_2}$.

In particular, it is easy to see the following fact.
\begin{proposition} \label{mu=0}
    If $\mu \in \GAMMA(m)$ satisfies $\mu_1 = 0$ or $\mu_2 = 0$, then $\mu \in \GAMMA(m)$.
\end{proposition}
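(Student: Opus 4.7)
The plan is to invoke the criterion recorded immediately before the proposition, namely that for $\mu \in \LAMBDA(m)$ one has $\mu \in \GAMMA(m)$ if and only if the $\partial_y$-coefficient of $\psi_\mu$ is divisible by $y^{\mu_2}$. The statement then reduces to inspecting the defining formula of $\psi_\mu$ on the two boundary faces $\mu_1 = 0$ and $\mu_2 = 0$; in each case the criterion is satisfied for trivial reasons.

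When $\mu_2 = 0$ there is nothing to check, since $y^{\mu_2} = 1$ divides every polynomial. When $\mu_1 = 0$ the index range $\{0,\dots,\mu_1 - 1\}$ in the $\partial_y$-sum defining $\psi_\mu$ is empty, so $\psi_\mu$ collapses to $(x+y)^m \partial_x$ (exactly as the text already notes). Hence its $\partial_y$-coefficient is $0$, which is divisible by $y^{\mu_2}$, and we are done.

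If one prefers to bypass the preceding criterion, the same conclusion can be reached by a direct application of Saito's criterion (\Cref{Saito-Ziegler}): when $\mu_1 = 0$ the matrix $M(\psi_\mu, \psi_\mu')$ is upper triangular up to sign and one immediately reads off $\det M(\psi_\mu, \psi_\mu') = (x+y)^m y^{\mu_2} = Q(\A,\mu)$; when $\mu_2 = 0$ the identity $\sum_{j=0}^{m}\binom{m}{j}x^j y^{m-j} = (x+y)^m$ gives $\det M(\psi_\mu, \psi_\mu') = x^{\mu_1}(x+y)^m = Q(\A,\mu)$. Either way, $\{\psi_\mu, \psi_\mu'\}$ is a basis for $D(\A,\mu)$.

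There is no genuine obstacle in this proposition; it is essentially a consistency check asserting that the conventions adopted for the empty sums in the definition of $\psi_\mu$ behave correctly on the boundary of $\LAMBDA(m)$. Its real purpose is presumably to serve as a base case or sanity check for the finer analysis of $\GAMMA(m)$ in the interior, carried out later in the section.
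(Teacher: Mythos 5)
Your proof is correct and follows the same approach as the paper: both arguments apply the criterion recorded just before the proposition (that $\mu \in \GAMMA(m)$ iff $\psi_\mu(y)$ is divisible by $y^{\mu_2}$) and observe that when $\mu_1=0$ the $\partial_y$-coefficient vanishes, while when $\mu_2=0$ the divisor is $1$. The alternative Saito's-criterion computation you sketch is a harmless extra; the core argument matches the paper's.
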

\begin{proof}
    If $\mu_1 = 0$, then $\mu \in \GAMMA(m)$ since $\psi_\mu(y) = 0$.
    Hence $\psi_\mu(y)$ is divisible by $y^{\mu_2}$.
    If $\mu_2 = 0$, then $\mu \in \GAMMA(m)$ since $y^{\mu_2} = 1$.
    Hence $\psi_\mu(y)$ is divisible by $y^{\mu_2}$.
\end{proof}

For $\mu \in \LAMBDA(m)$, define
\begin{align}
    J_\mu \coloneqq \Set{j \in \mathbb{Z}_{\geq 0} | m - \mu_2 < j < \mu_1}.
\end{align}
We can characterize $\GAMMA(m)$ with binomial coefficients as follows.
\begin{theorem}\label{GAMMA_p(m)}
    For any $m \in \mathbb{Z}_{>0}$, we have
    \begin{align}
        \GAMMA(m) &= \Set{\mu \in \LAMBDA(m) | j \in J_\mu \implies \binom{m}{j} \equiv 0 \pmod{p}}.
    \end{align}
\end{theorem}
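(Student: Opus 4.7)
The plan is to unpack the characterization of $\GAMMA(m)$ that is already established in the paragraph preceding the theorem, and then to read off the divisibility condition on $y^{\mu_2}$ term by term.

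First I would recall that, by the discussion immediately before the statement, the containments $\psi_\mu' \in D(\A,\mu)$, $\psi_\mu(x) \in x^{\mu_1} S$, and $\psi_\mu(x+y) = (x+y)^m \in (x+y)^m S$ hold automatically, and the determinant computation shows $\det M(\psi_\mu,\psi_\mu') \doteq Q(\A,\mu)$. Consequently, by Saito's criterion (\Cref{Saito-Ziegler}), $\{\psi_\mu,\psi_\mu'\}$ is a basis for $D(\A,\mu)$ if and only if $\psi_\mu \in D(\A,\mu)$, which in turn reduces to the single condition $\psi_\mu(y) \in y^{\mu_2} S$. This is exactly the reduction already noted in the excerpt, so the whole theorem collapses to analyzing this divisibility.

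Next I would write out explicitly
\begin{align}
\psi_\mu(y) = \sum_{j=0}^{\mu_1-1} \binom{m}{j} x^j y^{m-j}.
\end{align}
Each monomial $x^j y^{m-j}$ is divisible by $y^{\mu_2}$ precisely when $m - j \geq \mu_2$, that is, when $j \leq m - \mu_2$. Therefore the terms that could obstruct divisibility by $y^{\mu_2}$ are exactly those indexed by $j$ with $m - \mu_2 < j \leq \mu_1 - 1$, and this range is precisely $J_\mu = \{j \in \mathbb{Z}_{\geq 0} : m-\mu_2 < j < \mu_1\}$. Since the monomials $x^j y^{m-j}$ for different $j$ are $S$-linearly independent modulo $y^{\mu_2}$, the sum lies in $y^{\mu_2} S$ if and only if the coefficient $\binom{m}{j}$ of every term in the obstructing range vanishes in $\mathbb{F}_p$. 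This yields the inclusion $\GAMMA(m) \supseteq \{\mu \in \LAMBDA(m) : j \in J_\mu \implies \binom{m}{j} \equiv 0 \pmod p\}$ and, by the same equivalence read in reverse, the opposite inclusion.

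There is essentially no obstacle here beyond bookkeeping of indices; the only point requiring a small sanity check is the boundary case $J_\mu = \emptyset$, which occurs when $\mu_1 \leq m - \mu_2 + 1$ (equivalently $\mu_1 + \mu_2 \leq m + 1$). In that case the condition on binomial coefficients is vacuous and the divisibility holds for trivial reasons, consistent with Wakamiko's binomial-type basis in \Cref{Wakamiko}\ref{Wakamiko binomial type}. One should also note the degenerate cases $\mu_1 = 0$ (the first sum in $\psi_\mu$ already accounts for the full $(x+y)^m$, so $\psi_\mu(y) = 0$) and $\mu_1 > m$, both of which are consistent with and subsumed by \Cref{mu=0}; these impose no extra argument. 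Thus the theorem follows directly from Saito's criterion together with the elementary observation about which monomials in $\psi_\mu(y)$ fail to be divisible by $y^{\mu_2}$.
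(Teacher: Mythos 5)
Your proof is correct and follows essentially the same route as the paper's: both reduce to the already-established equivalence $\mu \in \GAMMA(m) \iff \psi_\mu(y) \in y^{\mu_2}S$, expand $\psi_\mu(y) = \sum_{j=0}^{\mu_1-1}\binom{m}{j}x^jy^{m-j}$, and observe that the obstructing indices are exactly those in $J_\mu$. (Minor quibble: the coefficients $\binom{m}{j}$ lie in $\mathbb{F}$, so the relevant linear independence of the monomials modulo $y^{\mu_2}$ is over $\mathbb{F}$, not over $S$; this does not affect the argument.)
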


\begin{proof}
    Let $\mu \in \GAMMA(m)$.
    If $\mu_1 = 0$, then $J_\mu = \emptyset$. Hence $\mu$ belongs to the right-hand set.
    If $\mu_2 = 0$, then $\binom{m}{j} \equiv 0$ for any $j \in J_\mu$ since $m < j$.
    Hence $\mu$ belongs to the right-hand set.

    Suppose that $\mu \in \GAMMA(m)$ satisfies $\mu_1,\mu_2 > 0$.
    Then 
    \begin{align}
        \psi_\mu(y) = \binom{m}{0}y^m + \cdots + \binom{m}{m - \mu_2}x^{m - \mu_2}y^{\mu_2} + \cdots + \binom{m}{\mu_1 - 1}x^{\mu_1 - 1}y^{m - \mu_1 + 1}
    \end{align}
    can be divided by $y^{\mu_2}$, that is, $\psi_\mu(y)$ has no $x^jy^{m-j}$ term for $j \in J_\mu$.
    Hence $\binom{m}{j}$ must be zero $j \in J_\mu$.

    Conversely, if $\mu \not\in \GAMMA(m)$, then $\psi_\mu(y)$ cannot be divided by $y^{\mu_2}$.
    This implies that $\psi_\mu(y)$ has a term of $x^{j_0}y^{m-j_0}$ for some $j_0 \in J_\mu$.
    Hence $\binom{m}{j_0} \not\equiv 0 \pmod{p}$.
\end{proof}

\begin{corollary}\label{leq implies GAMMA(m)}
    If $\mu \in \LAMBDA(m)$ satisfies $\mu_1 + \mu_2 \leq m + 1$, then $\mu \in \GAMMA(m)$.
\end{corollary}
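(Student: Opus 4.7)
The plan is to deduce this immediately from \Cref{GAMMA_p(m)} by observing that the hypothesis $\mu_1+\mu_2\leq m+1$ forces the index set $J_\mu$ to be empty, so the divisibility condition on binomial coefficients is vacuously satisfied.

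More concretely, I would first rewrite the assumption as $\mu_1 - 1 \leq m - \mu_2$. Then for any integer $j$, the conditions $m-\mu_2 < j$ and $j < \mu_1$ combined would yield $m - \mu_2 < j \leq \mu_1 - 1 \leq m - \mu_2$, which is impossible. Hence
\begin{align}
J_\mu = \Set{j \in \mathbb{Z}_{\geq 0} | m - \mu_2 < j < \mu_1} = \emptyset.
\end{align}
The implication ``$j \in J_\mu \implies \binom{m}{j} \equiv 0 \pmod p$'' therefore holds vacuously, and by \Cref{GAMMA_p(m)} we conclude $\mu \in \GAMMA(m)$.

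There is no real obstacle here; the corollary is a purely combinatorial consequence of the characterization in \Cref{GAMMA_p(m)}. The only point worth a sentence of comment is the edge case where $\mu_1 = 0$ or $\mu_2 = 0$, but these are already covered (either by the inequality chain above or directly by \Cref{mu=0}), so no separate argument is needed.
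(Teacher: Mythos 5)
Your proof is correct and follows the same approach as the paper, which simply notes that $J_\mu = \emptyset$ when $\mu_1 + \mu_2 \leq m+1$ and invokes \Cref{GAMMA_p(m)}. You have merely spelled out the one-line inequality argument in more detail.
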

\begin{proof}
    This follows from the fact that $J_\mu = \emptyset$ if $\mu_1 + \mu_2 \leq  m+1$.
\end{proof}

\begin{lemma}\label{l412}
    Suppose that $\mu, \nu \in \LAMBDA(m)$ are adjacent and $\mu \subseteq \nu$.
    If $\mu \in \GAMMA(m)$ and $\nu \not\in \GAMMA(m)$, then $\theta_\nu = \alpha\psi_\mu$, where
        \begin{align}
            \alpha = \begin{cases*}
                x & if $\nu_1 = \mu_1 + 1$;\\
                y & if $\nu_2 = \mu_2 + 1$.
            \end{cases*} \label{alpha}
        \end{align}
        Hence \begin{align}
            \exp(\A,\nu) = (m+1,\, \mu_1 + \mu_2) = (m+1,\, \nu_1 + \nu_2 - 1)
        \end{align} and $\DELTA(\nu) < \DELTA(\mu)$.
\end{lemma}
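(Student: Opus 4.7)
The plan is to apply \Cref{AN1}, which forces $\DELTA(\nu) = \DELTA(\mu)\pm 1$ for adjacent $\mu$ and $\nu$, and to rule out the $+1$ branch by refuting $\psi_\mu \in D(\A,\nu)$ via \Cref{GAMMA_p(m)}.

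First I would note that, since $\nu \not\in \GAMMA(m)$, the contrapositive of \Cref{leq implies GAMMA(m)} gives $\nu_1 + \nu_2 \geq m+2$, hence $\mu_1 + \mu_2 \geq m+1$. In particular $\deg\psi_\mu = m < \mu_1 + \mu_2 = \deg\psi'_\mu$, so the lower degree basis of $D(\A,\mu)$ is $\theta_\mu \doteq \psi_\mu$ (unique up to scalar, since the two degrees are distinct), and $\DELTA(\mu) = \mu_1+\mu_2-m \geq 1$.

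Next, suppose toward contradiction that $\DELTA(\nu) = \DELTA(\mu)+1$; then by \Cref{AN1} we would have $\theta_\nu = \theta_\mu \doteq \psi_\mu$, so $\psi_\mu \in D(\A,\nu)$. To refute this, I would compare $J_\mu$ and $J_\nu$: directly from the definition, $J_\nu = J_\mu \cup \{\mu_1\}$ when $\nu_1 = \mu_1+1$, and $J_\nu = \{m-\mu_2\} \cup J_\mu$ when $\nu_2 = \mu_2+1$, with the new index indeed lying in $J_\nu$ thanks to $\mu_1+\mu_2\geq m+1$. Since $\mu \in \GAMMA(m)$ and $\nu \notin \GAMMA(m)$, \Cref{GAMMA_p(m)} forces $\binom{m}{j_0} \not\equiv 0 \pmod{p}$ precisely at this new index $j_0$. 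In the first case, the coefficient of $x^{\mu_1}y^{m-\mu_1}$ in $\psi_\mu(x) = \sum_{j = \mu_1}^{m}\binom{m}{j}x^{j}y^{m-j}$ is $\binom{m}{\mu_1} \not\equiv 0$, so $\psi_\mu(x) \not\in x^{\nu_1}S$; in the second, the coefficient of $x^{m-\mu_2}y^{\mu_2}$ in $\psi_\mu(y) = \sum_{j=0}^{\mu_1-1}\binom{m}{j}x^{j}y^{m-j}$ is $\binom{m}{m-\mu_2} \not\equiv 0$, so $\psi_\mu(y) \not\in y^{\nu_2}S$. Either way $\psi_\mu \not\in D(\A,\nu)$, the desired contradiction.

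Therefore $\DELTA(\nu) < \DELTA(\mu)$, and \Cref{AN1} yields $\theta_\nu \doteq \alpha\theta_\mu \doteq \alpha\psi_\mu$ with $\alpha$ as in the statement. The exponent formula then follows from $\deg(\alpha\psi_\mu) = m+1$ and $\deg\theta'_\nu = |\nu|-(m+1) = \mu_1+\mu_2$, giving $\exp(\A,\nu) = (m+1,\,\nu_1+\nu_2-1)$ and $\DELTA(\nu) = \DELTA(\mu)-1$. I expect the only genuine bookkeeping to be the identification of the single index in $J_\nu\setminus J_\mu$ and matching it to the correct monomial of $\psi_\mu$; every other step is an immediate invocation of the already-established \Cref{AN1} and \Cref{GAMMA_p(m)}.
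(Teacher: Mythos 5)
Your proof is correct and follows essentially the same route as the paper: use \Cref{AN1} to restrict $\theta_\nu$ to $\theta_\mu$ or $\alpha\theta_\mu$ (with $\theta_\mu \doteq \psi_\mu$), then rule out the first branch. Your argument is in fact slightly more explicit than the paper's at the decisive step, carefully identifying the single index $j_0 \in J_\nu\setminus J_\mu$ and the corresponding nonvanishing binomial coefficient that forces $\psi_\mu \notin D(\A,\nu)$; the paper compresses this to the observation that $\theta_\nu \neq \psi_\nu$ via \Cref{GAMMA_p(m)}.
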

\begin{proof}
    Note that $\deg{\psi_\nu} < \deg{\psi_\nu'}$ since $\nu_1 + \nu_2 > m+1 > m$.
    It is clear that $\theta_\nu \neq \psi_\nu$ by \Cref{GAMMA_p(m)}.
    It follows from \Cref{AN1} that $\theta_\nu = \alpha\theta_\mu = \alpha\psi_\mu$, where $\alpha$ is defined by \eqref{alpha}.
    Hence 
    \begin{align}
        \exp(\A,\nu) = (\deg{\theta_\nu},\, |\nu| - \deg{\theta_\nu}) = (m+1,\, \mu_1 + \mu_2).
    \end{align}
    Furthermore, we have
    \begin{align}
        \DELTA(\nu) = \mu_1 + \mu_2 - (m+1) < \mu_1 + \mu_2 - m = \DELTA(\mu).
    \end{align}
\end{proof}

\begin{theorem}\label{GAMMA_p(m)+}
    For any $m \in \mathbb{Z}_{> 0}$, we have
    \begin{align}
        \GAMMA(m) = \Set{ \mu \in \LAMBDA(m) | \exp(\A,\mu) = (m,\, \mu_1 + \mu_2)}.
    \end{align}
\end{theorem}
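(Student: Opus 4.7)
My plan is to prove the two inclusions separately. The forward inclusion is immediate from the definition: if $\mu \in \GAMMA(m)$, then $\{\psi_\mu,\psi_\mu'\}$ is a basis of $D(\A,\mu)$, and since $\deg\psi_\mu = m$ and $\deg\psi_\mu' = \mu_1+\mu_2$, the multiset of exponents equals $\{m,\mu_1+\mu_2\}$.

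For the reverse inclusion, I would assume $\exp(\A,\mu) = (m,\mu_1+\mu_2)$ and split on the size of $\mu_1+\mu_2$ relative to $m$. If $\mu_1+\mu_2 \leq m+1$, then \Cref{leq implies GAMMA(m)} gives $\mu \in \GAMMA(m)$ outright, with no need to invoke the hypothesis on the exponents; the corner cases $\mu_1=0$ or $\mu_2=0$ are covered by \Cref{mu=0}. So I may reduce to $\mu_1+\mu_2 > m+1$ with $\mu_1,\mu_2 > 0$.

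The substantive case is therefore $\mu_1+\mu_2 > m+1$. Here the assumption $\exp(\A,\mu) = (m,\mu_1+\mu_2)$ forces $m$ to be the strictly smaller exponent, so the degree-$m$ graded piece of $D(\A,\mu)$ is one-dimensional over $\mathbb{F}$, spanned by some non-zero $\theta = f\partial_x + g\partial_y$ with $\deg f = \deg g = m$. The plan is to pin down the shape of $\theta$. From $\theta(x+y) = f + g \in (x+y)^m S$ combined with $\deg(f+g) \leq m$, we must have $f+g = c(x+y)^m$ for some $c \in \mathbb{F}$. The subcase $c = 0$ is ruled out because it forces $f = -g$ to be divisible by both $x^{\mu_1}$ and $y^{\mu_2}$, impossible for a nonzero polynomial of degree $m < \mu_1+\mu_2$. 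Hence $c \neq 0$, and after rescaling I may take $f+g = (x+y)^m$. Writing $f = \sum_{j=0}^m a_j x^j y^{m-j}$, the condition $f \in x^{\mu_1}S$ forces $a_j = 0$ for $j < \mu_1$, while $g \in y^{\mu_2}S$ forces $a_j = \binom{m}{j}$ for $j > m - \mu_2$. Since $\mu_1 + \mu_2 > m+1$, the two index ranges overlap exactly on $J_\mu$, so I must have $\binom{m}{j} \equiv 0 \pmod p$ for every $j \in J_\mu$. By \Cref{GAMMA_p(m)} this gives $\mu \in \GAMMA(m)$.

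The main obstacle, if any, is organizing the degree-$m$ coefficient analysis cleanly; the underlying observation is that the divisibility and summation constraints on $f$ and $g$ admit a nonzero solution if and only if the obstructing binomial coefficients vanish modulo $p$, in which case $\theta$ is necessarily a scalar multiple of $\psi_\mu$, and $\psi_\mu' \in D(\A,\mu)$ automatically completes a basis by the determinant computation preceding \Cref{mu=0}.
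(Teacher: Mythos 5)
Your proof is correct, but the reverse inclusion is handled by a genuinely different route from the paper's. The paper argues contrapositively: starting from $\mu \notin \GAMMA(m)$, it picks a $\nu \subseteq \mu$ in $\LAMBDA(m) \setminus \GAMMA(m)$ adjacent to an element of $\GAMMA(m)$, applies \Cref{l412} (which itself rests on Abe--Numata's \Cref{AN1}) to see that the minimum degree in $D(\A,\nu)$ is already $m+1$, and then uses the containment $D(\A,\mu) \subseteq D(\A,\nu)$ to rule out any degree-$m$ vector field. You instead assume $\exp(\A,\mu) = (m,\mu_1+\mu_2)$, take a nonzero homogeneous $\theta = f\partial_x + g\partial_y$ of degree $m$, observe that $f+g$ must be a nonzero scalar multiple of $(x+y)^m$ (the $c=0$ case is excluded since it would force $x^{\mu_1}y^{\mu_2} \mid f$ with $\deg f = m < \mu_1+\mu_2$), and then read off from $x^{\mu_1} \mid f$ and $y^{\mu_2} \mid g$ that $a_j = 0$ and $a_j = \binom{m}{j}$ must hold simultaneously for every $j \in J_\mu$, giving $\binom{m}{j} \equiv 0 \pmod p$; \Cref{GAMMA_p(m)} then concludes. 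Your argument is more elementary and self-contained — it uses only the definition of $D(\A,\mu)$ and \Cref{GAMMA_p(m)}, avoiding the adjacency machinery entirely — and it yields as a bonus that the degree-$m$ element, when it exists, is unique up to scalar and equal to $\psi_\mu$. The paper's route is shorter on the page because \Cref{l412} was already established and is reused later, but it is less transparent about \emph{why} the binomial condition is forced. Both are valid; yours could be presented as a direct replacement for the published proof.
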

\begin{proof}
    Suppose that $\mu \in \GAMMA(m)$.
    By definition, $\{\psi_\mu,\psi_\mu'\}$ is a basis for $D(\A,\mu)$.
    Then $\exp(\A,\mu) = (m,\, \mu_1 + \mu_2)$.

    Suppose that $\mu \not\in \GAMMA(m)$.
    Then $m < \mu_1 + \mu_2$ by \Cref{GAMMA_p(m)}.
    We can take $\nu \in \LAMBDA(m) \setminus \GAMMA(m)$ adjacent to an element of $\GAMMA(m)$ that satisfies $\nu \subseteq \mu$.   
    By \Cref{l412}, the minimum degree of elements of $D(\A,\nu)$ is $m+1$.
    Since $D(\A,\mu) \subseteq D(\A,\nu)$, any element of degree $m$ does not belong to $D(\A,\mu)$.
    Hence $\exp(\A,\mu) \neq (m,\, \mu_1 + \mu_2)$.
\end{proof}

We will now consider to explicitly represent the set $\GAMMA(m)$.
A non-empty subset $\mathcal{L}$ of $\LAMBDA(m)$ is a \textbf{lower set} of $\LAMBDA(m)$ if $\mu \subseteq \nu$ for $\mu \in \LAMBDA(m)$ and $\nu \in \mathcal{L}$ implies $\mu \in  \mathcal{L}$.
Let $\mathcal{L}(S)$ denote the lower set of $\LAMBDA(m)$ generated by $S \subseteq \LAMBDA(m)$.

\begin{proposition}\label{Gamma is an order ideal}
	$\GAMMA(m)$ is a lower set of $\LAMBDA(m)$.
\end{proposition}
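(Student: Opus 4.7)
The plan is to derive the result directly from the combinatorial characterization of $\GAMMA(m)$ established in Theorem \ref{GAMMA_p(m)}, namely
\begin{align}
\GAMMA(m) = \Set{\mu \in \LAMBDA(m) | \binom{m}{j} \equiv 0 \pmod{p} \text{ for all } j \in J_\mu},
\end{align}
where $J_\mu = \{j \in \mathbb{Z}_{\geq 0} : m - \mu_2 < j < \mu_1\}$. The key observation is that $J_\mu$ depends monotonically on the pair $(\mu_1, \mu_2)$.

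First I would verify that $\GAMMA(m)$ is nonempty, so that ``lower set'' is meaningful. For instance $(0, 0, m) \in \GAMMA(m)$ by \Cref{mu=0} (or by \Cref{leq implies GAMMA(m)}), so the nonemptiness clause in the definition of lower set is satisfied. Next, take $\nu \in \GAMMA(m)$ and $\mu \in \LAMBDA(m)$ with $\mu \subseteq \nu$, that is, $\mu_i \leq \nu_i$ for $i \in \{1,2,3\}$. Since both $\mu$ and $\nu$ lie in $\LAMBDA(m)$, we automatically have $\mu_3 = \nu_3 = m$, and the assumption gives $\mu_1 \leq \nu_1$ and $\mu_2 \leq \nu_2$. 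Therefore $m - \mu_2 \geq m - \nu_2$, which together with $\mu_1 \leq \nu_1$ yields the inclusion $J_\mu \subseteq J_\nu$.

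With $J_\mu \subseteq J_\nu$ in hand, the vanishing condition $\binom{m}{j} \equiv 0 \pmod{p}$ for every $j \in J_\nu$ (which holds because $\nu \in \GAMMA(m)$) restricts to the same condition on $J_\mu$. Applying \Cref{GAMMA_p(m)} in the other direction concludes $\mu \in \GAMMA(m)$, proving $\GAMMA(m)$ is closed under taking componentwise-smaller multiplicities within $\LAMBDA(m)$. I do not expect any real obstacle here: once the characterization via $J_\mu$ is invoked, the argument reduces to the trivial monotonicity of the open interval $(m - \mu_2, \mu_1)$ in its endpoints.
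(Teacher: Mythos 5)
Your proposal is correct and follows essentially the same route as the paper: both use the characterization of $\GAMMA(m)$ from \Cref{GAMMA_p(m)}, observe $J_\mu \subseteq J_\nu$ from the componentwise inequalities, and conclude by restricting the vanishing condition. The extra remark about nonemptiness is a harmless addition.
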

\begin{proof}
    Let $\mu \in \LAMBDA(m)$ and $\nu \in \GAMMA(m)$ satisfy $\mu \subseteq \nu$.
    Since $\mu_1 \leq \nu_1$ and $\mu_2 \leq \nu_2$, we have $J_\mu \subseteq J_\nu$.
    Suppose that $j \in J_\mu$.
    Then $j \in J_\nu$.
    Since $\nu \in \GAMMA(m)$, it follows from \Cref{GAMMA_p(m)} that $\binom{m}{j} \equiv 0 \pmod{p}$.
    Hence $\mu \in \GAMMA(m)$ by \Cref{GAMMA_p(m)}.
\end{proof}

\color{black}

A non-empty subset $\mathcal{U}$ of $\LAMBDA(m)$ is an \textbf{upper set} of $\LAMBDA(m)$ if $\mu \supseteq \nu$ for $\mu \in \LAMBDA(m)$ and $\nu \in \mathcal{U}$ implies $\mu \in  \mathcal{U}$.
Let $\mathcal{U}(B)$ denote the upper set of $\LAMBDA(m)$ generated by $B \subseteq \LAMBDA(m)$.

The notion dual to an upper set is a lower set.
In other words, the complement of a lower set is an upper set.
Hence $\LAMBDA(m) \setminus \GAMMA(m)$ is an upper set since $\GAMMA(m)$ is a lower set.

Let
\begin{align}
    B(m) \coloneq \Set{ \mu \in \LAMBDA(m) | \mu_1 + \mu_2 = m + 2,\ \ \binom{m}{\mu_1 - 1} \not\equiv 0 \pmod{p} }.
\end{align}

\begin{theorem}\label{minimality of B}
    $B(m)$ is the set of minimal elements of $\LAMBDA(m) \setminus \GAMMA(m)$.
    Hence $\LAMBDA(m) \setminus \GAMMA(m)$ is an upper set of $\LAMBDA(m)$ generated by $B(m)$:
    \begin{align}
        \LAMBDA(m) \setminus \GAMMA(m) = \mathcal{U}(B(m)).
    \end{align}
\end{theorem}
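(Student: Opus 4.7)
The plan is to verify the two containments: first that $B(m)$ is contained in $\LAMBDA(m)\setminus\GAMMA(m)$ and that each element of $B(m)$ is minimal there, and second that every minimal element of $\LAMBDA(m)\setminus\GAMMA(m)$ lies in $B(m)$. The statement about the upper set will then follow immediately, since the complement of a lower set is an upper set and every upper set is generated by its minimal elements.

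First I would check $B(m)\subseteq\LAMBDA(m)\setminus\GAMMA(m)$. For $\mu\in B(m)$, the equation $\mu_1+\mu_2=m+2$ forces $J_\mu=\{\mu_1-1\}$, and the defining condition $\binom{m}{\mu_1-1}\not\equiv 0\pmod p$ together with \Cref{GAMMA_p(m)} yields $\mu\notin\GAMMA(m)$. Minimality is then immediate from \Cref{leq implies GAMMA(m)}: any $\nu\subsetneq\mu$ in $\LAMBDA(m)$ satisfies $\nu_1+\nu_2\leq\mu_1+\mu_2-1=m+1$, hence $\nu\in\GAMMA(m)$.

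Next I would prove the converse: any minimal element $\mu$ of $\LAMBDA(m)\setminus\GAMMA(m)$ lies in $B(m)$. The first observations are that $\mu_1\geq 1$ and $\mu_2\geq 1$ (otherwise \Cref{mu=0} forces $\mu\in\GAMMA(m)$), and that $J_\mu\neq\emptyset$ (otherwise \Cref{GAMMA_p(m)} gives $\mu\in\GAMMA(m)$), so $\mu_1+\mu_2\geq m+2$. The key step is to consider the two one-step predecessors $\nu^{(1)}:=\mu-\bm{e}_1$ and $\nu^{(2)}:=\mu-\bm{e}_2$, both in $\LAMBDA(m)$ and strictly below $\mu$; by minimality both lie in $\GAMMA(m)$. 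Direct computation gives
\begin{align}
J_{\nu^{(1)}}=\{m-\mu_2+1,\ldots,\mu_1-2\},\qquad J_{\nu^{(2)}}=\{m-\mu_2+2,\ldots,\mu_1-1\},
\end{align}
so $J_{\nu^{(1)}}\cup J_{\nu^{(2)}}=J_\mu$ whenever $\mu_1+\mu_2\geq m+3$ (in that case the two sets overlap). Since $\nu^{(1)},\nu^{(2)}\in\GAMMA(m)$, \Cref{GAMMA_p(m)} forces $\binom{m}{j}\equiv 0\pmod p$ for every $j\in J_\mu$, contradicting $\mu\notin\GAMMA(m)$. Therefore $\mu_1+\mu_2=m+2$, whence $J_\mu=\{\mu_1-1\}$, and the non-vanishing condition $\binom{m}{\mu_1-1}\not\equiv 0\pmod p$ is exactly what witnesses $\mu\notin\GAMMA(m)$. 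Thus $\mu\in B(m)$.

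The only step that demands a little care is the combinatorial identification $J_{\nu^{(1)}}\cup J_{\nu^{(2)}}=J_\mu$ under the inequality $\mu_1+\mu_2\geq m+3$; once that is seen, the rest is formal. To close, since $\GAMMA(m)$ is a lower set by \Cref{Gamma is an order ideal}, its complement $\LAMBDA(m)\setminus\GAMMA(m)$ is an upper set, and an upper set coincides with the upper set generated by its minimal elements, giving $\LAMBDA(m)\setminus\GAMMA(m)=\mathcal{U}(B(m))$.
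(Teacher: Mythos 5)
Your proof is correct and follows the paper's argument essentially verbatim: your one-step predecessors $\nu^{(1)}=\mu-\bm{e}_1$ and $\nu^{(2)}=\mu-\bm{e}_2$ are exactly the paper's $\mu'$ and $\mu''$, and the decomposition $J_\mu = J_{\nu^{(1)}} \cup J_{\nu^{(2)}}$ forcing $\mu_1+\mu_2=m+2$ is the same key step. (A minor slip: when $\mu_1+\mu_2=m+3$ the two index sets are disjoint singletons rather than overlapping, but their union still equals $J_\mu$, so the argument is unaffected.)
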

\begin{proof}
    Let $\mu \in \LAMBDA(m) \setminus \GAMMA(m)$ be a minimal element.
    %satisfy $\mu_1$, $\mu_2 > 1$.
    Then $\mu_1$, $\mu_2 > 0$ since $\binom{m}{-1} = \binom{m}{m+1} = 0$.
    \Cref{leq implies GAMMA(m)} implies that $\mu_1 + \mu_2 \geq m+2$.
    Define
    \begin{align}
        \mu' \coloneqq (\mu_1-1,\, \mu_2,\, m),\quad \mu'' \coloneqq (\mu_1,\, \mu_2-1,\, m).
    \end{align}
    Then $\mu'$ and $\mu''$ belong to $\GAMMA(m)$ by the minimality of $\mu$.
    Assume that $\mu_1 + \mu_2 > m+2$.
    Then $J_\mu = J_{\mu'} \cup J_{\mu''}$, and hence we have $\mu \in \GAMMA(m)$ since $\mu',\mu' \in \GAMMA(m)$.
    This contradicts the fact that $\mu \not\in \GAMMA(m)$.
    Therefore $\mu_1 + \mu_2 = m +2$.
    Then $m - \mu_2 = \mu_1 - 2  < \mu_1 - 1 < \mu_1$, that is, $J_\mu = \{\mu_1 - 1\}$.
    Since $\mu \not\in \GAMMA(m)$, we have $\binom{m}{\mu_1 - 1} \not\equiv 0 \pmod{p}$.
    Hence $\mu \in B(m)$.

    Conversely, suppose that $\mu \in B(m)$.
    Then $J_\mu = \{\mu_1 - 1\}$ since $\mu_1 + \mu_2 = m+ 2$.
    Since $\binom{m}{\mu_1-1} \not\equiv 0 \pmod{p}$, we have $\mu \not\in \GAMMA(m)$.
    The minimality of $\mu \in \LAMBDA(m) \setminus \GAMMA(m)$ follows from \Cref{leq implies GAMMA(m)}.
\end{proof}

For $m \in \mathbb{Z}_{>0}$, define 
\begin{align}
    G_m \coloneqq \Set{ g \in \mathbb{Z}_{\geq 0} | \text{$c_e(g) \leq c_e(m)$ for any $e \in \mathbb{Z}_{\geq 0}$} },
\end{align}
and denote $G_m = \{g_0,g_1,\ldots,g_t\}$, where $0 = g_0 \leq  g_1 \leq \cdots \leq g_t = m$.
If $g \in G_m$, then $m-g \in G_m$ since $c_e(m-g) = c_e(m) - c_e(g)$ for any $e \in \mathbb{Z}_{\geq 0}$.
Hence we have
\begin{align}
    g_i +  g_{t-i} = m \label{g+g=m}
\end{align}
for any $i \in \{0,1,\ldots,t\}$.

\begin{proposition}\label{Bgg}
    \begin{align}
        B(m) = \Set{ (g_i+1,\, g_{t-i}+1,\, m) \in \LAMBDA(m) | i \in \{0,1,\ldots,t\} }.
    \end{align}
\end{proposition}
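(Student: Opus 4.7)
The plan is to translate the arithmetic condition defining $B(m)$ directly into membership in $G_m$ via Lucas's theorem, and then use the symmetry identity \eqref{g+g=m} to rewrite each element in the asserted form.

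First, I would recall that by Lucas's theorem, $\binom{m}{j} \not\equiv 0 \pmod{p}$ if and only if $c_e(j) \leq c_e(m)$ for every $e \in \mathbb{Z}_{\geq 0}$, i.e., if and only if $j \in G_m$. Applying this to $j = \mu_1 - 1$, an element $\mu = (\mu_1,\mu_2,m) \in \LAMBDA(m)$ satisfying $\mu_1 + \mu_2 = m+2$ lies in $B(m)$ precisely when $\mu_1 - 1 \in G_m$.

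Next, I would show both inclusions. For the forward inclusion, take $\mu \in B(m)$. Then $\mu_1 - 1 = g_i$ for some unique $i \in \{0,1,\ldots,t\}$, and since $\mu_2 = m + 2 - \mu_1$, the identity $g_i + g_{t-i} = m$ from \eqref{g+g=m} gives $\mu_2 - 1 = m - g_i = g_{t-i}$. Hence $\mu = (g_i + 1,\, g_{t-i} + 1,\, m)$. For the reverse inclusion, for each $i \in \{0,1,\ldots,t\}$, the triple $\mu = (g_i+1,\, g_{t-i}+1,\, m)$ satisfies $\mu_1 + \mu_2 = g_i + g_{t-i} + 2 = m+2$ by \eqref{g+g=m}, and $\mu_1 - 1 = g_i \in G_m$, so Lucas's theorem gives $\binom{m}{\mu_1 - 1} \not\equiv 0 \pmod{p}$, which places $\mu$ in $B(m)$.

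There is no real obstacle beyond a careful bookkeeping of indices; the entire content of the proposition is that the defining condition of $B(m)$ (non-vanishing of $\binom{m}{\mu_1-1}$ together with $\mu_1+\mu_2=m+2$) exactly parametrizes the pairs $(j, m-j)$ with $j \in G_m$, and the indexing $g_i \mapsto g_{t-i}$ is precisely the involution $j \mapsto m-j$ on $G_m$.
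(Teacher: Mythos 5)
Your proof is correct and follows essentially the same route as the paper: translate the nonvanishing condition $\binom{m}{\mu_1-1}\not\equiv 0\pmod p$ into $\mu_1-1\in G_m$ via Lucas, then use the involution $g_i\mapsto g_{t-i}=m-g_i$ from \eqref{g+g=m} to pin down $\mu_2$. The paper does exactly this bookkeeping for both inclusions, so there is nothing to add.
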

\begin{proof}
    Let $\mu \in B(m)$.
    Then $\mu_1 - 1 \in G_m$ since $\binom{m}{\mu_1 - 1} \not\equiv 0 \pmod{p}$, that is, there exists $i \in \{0,1,\ldots,t\}$ such that $\mu_1 =  g_i + 1$.
    Moreover, since $\mu_1 + \mu_2 = m+2$, it follows from \eqref{g+g=m} that 
    \begin{align}
        \mu_2 = m + 2 - \mu_1 = m - g_i + 1 = g_{t-i} + 1.
    \end{align}
    
    Let $\mu = (g_i+1,\, g_{t-i}+1,\, m)$ for $i \in \{0,1,\ldots,t\}$.
    Then $\mu_1 + \mu_2 = g_i + g_{t-i} + 2 = m+2$ by \eqref{g+g=m}.
    Since $\mu_1 - 1 = g_i$ satisfies $c_e(\mu_1 - 1) \leq c_e(m)$ for any $e \in \mathbb{Z}_{\geq 0}$, we have $\binom{m}{\mu_1 - 1} \not\equiv 0 \pmod{p}$.
    Hence $\mu \in B(m)$.
\end{proof}

Let
\begin{align}
    S(m) \coloneqq \Set{ (g_i,\, g_{t-i+1},\, m) \in \LAMBDA(m) | i \in \{1,\ldots,t\} }.
\end{align}

\begin{theorem}\label{GAMMAIS(m)}

    $S(m)$ is the set of maximal elements of $\GAMMA(m)$.

\end{theorem}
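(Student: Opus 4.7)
The plan is to exploit the characterization from Theorem \ref{minimality of B} that $\LAMBDA(m) \setminus \GAMMA(m) = \mathcal{U}(B(m))$, together with the explicit description $B(m) = \{(g_j+1, g_{t-j}+1, m) : j \in \{0, \ldots, t\}\}$ from Proposition \ref{Bgg}. This translates the condition $\mu = (\mu_1, \mu_2, m) \in \GAMMA(m)$ into the statement that for every $j \in \{0, \ldots, t\}$, either $\mu_1 \leq g_j$ or $\mu_2 \leq g_{t-j}$. The proof then proceeds by verifying both containments.

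First, I would verify that every $\mu = (g_i, g_{t-i+1}, m) \in S(m)$ lies in $\GAMMA(m)$ and is maximal. Membership follows because for any $j \in \{0, \ldots, t\}$, either $j \geq i$ (so $\mu_1 = g_i \leq g_j$) or $j \leq i-1$ (so $t-j \geq t-i+1$, hence $\mu_2 = g_{t-i+1} \leq g_{t-j}$). For maximality, note that $\mu + \bm{e}_1 \supseteq (g_i+1, g_{t-i}+1, m) \in B(m)$ since $g_{t-i} < g_{t-i+1}$, and $\mu + \bm{e}_2 \supseteq (g_{i-1}+1, g_{t-i+1}+1, m) \in B(m)$ since $g_{i-1} < g_i$. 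By Theorem \ref{minimality of B}, neither $\mu + \bm{e}_1$ nor $\mu + \bm{e}_2$ belongs to $\GAMMA(m)$.

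Conversely, suppose $\mu \in \GAMMA(m)$ is maximal. Proposition \ref{mu=0} shows that $(0, n, m)$ and $(n, 0, m)$ lie in $\GAMMA(m)$ for every $n \geq 0$, producing infinite ascending chains; hence maximality forces $\mu_1, \mu_2 \geq 1$. From $\mu + \bm{e}_1 \notin \GAMMA(m)$, I pick $\beta = (g_j+1, g_{t-j}+1, m) \in B(m)$ with $\beta \subseteq \mu + \bm{e}_1$, yielding $g_j \leq \mu_1$ and $g_{t-j} < \mu_2$. The defining dichotomy for $\GAMMA(m)$ applied to this same $j$, combined with $\mu_2 > g_{t-j}$, forces $\mu_1 \leq g_j$, hence $\mu_1 = g_j$; set $i := j$. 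A symmetric argument using $\mu + \bm{e}_2 \notin \GAMMA(m)$ produces an index $i'$ with $\mu_2 = g_{t-i'}$ and $g_{i'} < \mu_1 = g_i$, so $i' < i$. Finally, applying the dichotomy for every $j \in \{0, \ldots, t\}$ now reads: $j \geq i$ or $j \leq i'$. Since no integer $j$ satisfies $i' < j < i$ and $i > i'$, this forces $i' = i - 1$, so $\mu = (g_i, g_{t-i+1}, m) \in S(m)$ with $i \in \{1, \ldots, t\}$.

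The main obstacle is the converse direction, specifically the step of pinning down $i' = i - 1$. It requires invoking the full strength of $\mu \in \GAMMA(m)$ across all indices $j$, rather than merely the two consequences extracted from $\mu + \bm{e}_1, \mu + \bm{e}_2 \notin \GAMMA(m)$. The exclusion of axis elements via Proposition \ref{mu=0} is also crucial for ensuring $i \in \{1, \ldots, t\}$ rather than a degenerate boundary index.
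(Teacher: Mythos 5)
Your argument is correct and runs on the same engine as the paper's: both directions reduce $\GAMMA(m)$-membership to the question of sitting below (or above) the elements of $B(m)$, via Theorem~\ref{minimality of B} and Proposition~\ref{Bgg}, and the forward direction (every element of $S(m)$ is maximal) is essentially identical, pinching $\mu+\bm{e}_1$ and $\mu+\bm{e}_2$ from below by $(g_i+1,g_{t-i}+1,m)$ and $(g_{i-1}+1,g_{t-i+1}+1,m)$ respectively. You differ from the paper in two small but genuine ways. First, you verify explicitly that $S(m)\subseteq\GAMMA(m)$ via the dichotomy ``$\mu_1\le g_j$ or $\mu_2\le g_{t-j}$ for all $j$''; the paper leaves this step implicit, and your version is cleaner. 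Second, in the converse the paper pins down only $\mu_1=g_i$ (via $\mu+\bm{e}_1\notin\GAMMA(m)$), then finishes by observing that $\mu$ and $(g_i,g_{t-i+1},m)$ have the same first coordinate, hence are comparable, and two comparable maximal elements coincide --- this reuses the forward direction as a lemma. You instead run the same argument symmetrically on $\mu+\bm{e}_2$ to get $\mu_2=g_{t-i'}$ with $i'<i$, then use the full dichotomy across all indices to force $i'=i-1$. Your route avoids appealing to the forward direction and makes the combinatorics of the indices fully explicit, at the cost of one extra application of the dichotomy; the paper's comparability shortcut is a bit slicker once the forward direction is in hand. Both are correct and of comparable length, so the choice is a matter of taste.
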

\begin{proof}
    Let $\mu = (g_i,\, g_{t-i+1},\, m) \in S(m)$ for $i \in \{1,\ldots,t\}$.
    Define 
    \begin{align}
        \mu' &\coloneqq (g_i + 1,\, g_{t-i+1},\, m),& \mu'' &\coloneqq (g_i,\, g_{t-i+1} + 1,\, m),\\
        \nu' &\coloneqq (g_i + 1,\, g_{t-i} + 1,\, m),& \nu'' &\coloneqq (g_{i-1} + 1,\, g_{t-i+1} + 1,\, m).
    \end{align}
    Then $\nu' \subseteq \mu'$ and $\nu'' \subseteq \mu''$ since $g_{i'-1} + 1 \leq g_{i'}$ for each $i' \in \{1,\ldots,t\}$.
    Since $\nu',\nu'' \in B(m)$, it follows from \Cref{minimality of B} that $\mu',\mu'' \in \LAMBDA(m) \setminus \GAMMA(m)$.
    Therefore this implies that $\mu$ is a maximal element of $\GAMMA(m)$.

    Conversely, suppose that $\mu \in \GAMMA(m)$ is maximal.
    Since $\mu' \coloneqq (\mu_1 + 1,\, \mu_2,\, m)$ does not belong to $\GAMMA(m)$, there exists $i \in \{0,1,\ldots,t\}$ such that $(g_i + 1,\, g_{t-i} + 1,\, m) \subseteq \mu'$ by \Cref{minimality of B}.
    Thus we have $g_i \leq \mu_1$.
    Assume that $g_i < \mu_1$.
    Then $g_i + 1 \leq \mu_1$ holds, and hence $(g_i + 1,\, g_{t-i} + 1,\, m) \subseteq \mu$.
    This implies that $\mu \not\in \GAMMA(m)$, which is a contradiction.
    Thus $\mu_1 = g_i$.
    Since $\mu$ and $(g_i,\, g_{t-i+1},\, m) \in S(m)$ are maximal elements of $\GAMMA(m)$, and they are comparable, $\mu$ must be equal to $(g_i,\, g_{t-i+1},\, m)$.
    Hence $\mu \in S(m)$.
\end{proof}

\begin{corollary}\label{main thm:gammam}
    \begin{align}
        \GAMMA(m) = \mathcal{L}(S(m)) \cup \Set{\mu \in \LAMBDA(m) | \text{$\mu_1 = 0$ or $\mu_2 = 0$} }.
    \end{align}
\end{corollary}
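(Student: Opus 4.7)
The plan is to establish both inclusions separately, relying on Theorem~\ref{GAMMAIS(m)} (which identifies $S(m)$ as the set of maximal elements of $\GAMMA(m)$), Theorem~\ref{minimality of B} (which says $\LAMBDA(m)\setminus\GAMMA(m) = \mathcal{U}(B(m))$), Proposition~\ref{Gamma is an order ideal} (that $\GAMMA(m)$ is a lower set), and Proposition~\ref{mu=0} (that boundary multiplicities belong to $\GAMMA(m)$). The identity $g_i + g_{t-i} = m$ from \eqref{g+g=m} will be the combinatorial glue throughout.

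The inclusion $\supseteq$ is immediate: every element of $S(m)$ lies in $\GAMMA(m)$ (as a maximal element), and since $\GAMMA(m)$ is a lower set, $\mathcal{L}(S(m)) \subseteq \GAMMA(m)$; and Proposition~\ref{mu=0} shows that any $\mu \in \LAMBDA(m)$ with $\mu_1 = 0$ or $\mu_2 = 0$ lies in $\GAMMA(m)$.

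For the inclusion $\subseteq$, I would take $\mu \in \GAMMA(m)$, assume $\mu_1, \mu_2 > 0$ (the other case lands in the second summand of the right-hand side), and aim to produce $i \in \{1,\ldots,t\}$ with $\mu \subseteq (g_i,\, g_{t-i+1},\, m) \in S(m)$. First, I would observe that $\mu_1 \leq m$: otherwise $\mu \supseteq (m+1,\, 1,\, m) = (g_t + 1,\, g_0 + 1,\, m) \in B(m)$, contradicting $\mu \in \GAMMA(m)$ by Theorem~\ref{minimality of B}; symmetrically $\mu_2 \leq m$. Next, let $i$ be the smallest index in $\{1,\ldots,t\}$ with $g_i \geq \mu_1$, which exists because $g_t = m \geq \mu_1$. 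Minimality yields $g_{i-1} < \mu_1$, hence $\mu_1 \geq g_{i-1} + 1$. If we assumed $\mu_2 > g_{t-i+1}$ for contradiction, then $\mu_2 \geq g_{t-i+1} + 1 = g_{t-(i-1)} + 1$, so $\mu \supseteq (g_{i-1}+1,\, g_{t-(i-1)}+1,\, m) \in B(m)$, contradicting $\mu \in \GAMMA(m)$ once more via Theorem~\ref{minimality of B}. Therefore $\mu_2 \leq g_{t-i+1}$, and $\mu \subseteq (g_i,\, g_{t-i+1},\, m) \in S(m)$, so $\mu \in \mathcal{L}(S(m))$.

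The main obstacle is purely bookkeeping: one must translate the failure of $\mu \leq (g_i,\, g_{t-i+1},\, m)$ into the existence of a genuine minimal obstruction $(g_{i'}+1,\, g_{t-i'}+1,\, m) \in B(m)$ below $\mu$, and this is where the shift by one between the parametrizations of $S(m)$ and $B(m)$ must be tracked carefully using $g_{i-1}+1 \leq \mu_1$. Once the correct index $i'=i-1$ is chosen, the contradiction is immediate and no further ingredients are needed.
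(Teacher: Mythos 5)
Your proof is correct. The paper states this corollary without an explicit proof, and your argument supplies the derivation the paper implicitly relies on: the $\supseteq$ inclusion follows from Theorem~\ref{GAMMAIS(m)}, Proposition~\ref{Gamma is an order ideal}, and Proposition~\ref{mu=0}, while the $\subseteq$ direction requires exactly the bookkeeping you carry out --- first bounding $\mu_1,\mu_2\leq m$ via the extreme elements of $B(m)$, then choosing the least $i$ with $g_i\geq\mu_1$ so that a failure of $\mu_2\leq g_{t-i+1}$ would place the obstruction $(g_{i-1}+1,\,g_{t-(i-1)}+1,\,m)\in B(m)$ below $\mu$, contradicting Theorem~\ref{minimality of B}. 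This last step is genuinely needed because $\GAMMA(m)$ is infinite, so the bare statement that $S(m)$ is its set of maximal elements does not by itself yield $\GAMMA(m)\subseteq\mathcal{L}(S(m))\cup\{\mu_1=0\text{ or }\mu_2=0\}$; your proof closes that gap cleanly.
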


The generators of $\LAMBDA(m) \setminus \GAMMA(m)$ can also be characterized as follows.

\begin{corollary}
    %We have
    \begin{align}
        B(m) = \Set{ \mu \in \LAMBDA(m) | \mu_1 + \mu_2 = m + 2,\ \ \DELTA(\mu) = 0}. \label{BDELTA = 0}
    \end{align}
\end{corollary}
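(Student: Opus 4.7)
The plan is to use \Cref{minimality of B}, which characterizes $B(m)$ as the set of minimal elements of $\LAMBDA(m) \setminus \GAMMA(m)$ and tells us $\mu_1 + \mu_2 = m+2$ for any $\mu \in B(m)$, together with \Cref{l412} (for the direction $B(m) \subseteq \{\DELTA = 0\}$) and \Cref{GAMMA_p(m)+} combined with \Cref{leq implies GAMMA(m)} (for the reverse direction).

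For the inclusion $B(m) \subseteq \{\mu \in \LAMBDA(m) \mid \mu_1 + \mu_2 = m+2,\ \DELTA(\mu) = 0\}$, I would take $\mu \in B(m)$. By \Cref{minimality of B}, $\mu_1 + \mu_2 = m+2$ and $\mu$ is minimal in $\LAMBDA(m) \setminus \GAMMA(m)$. The condition $\binom{m}{\mu_1-1} \not\equiv 0 \pmod p$ forces $\mu_1,\mu_2 > 0$. Then $\mu' \coloneqq (\mu_1 - 1, \mu_2, m)$ is adjacent to $\mu$, is strictly contained in $\mu$, and lies in $\GAMMA(m)$ by minimality. Applying \Cref{l412} with the pair $(\mu',\mu)$ gives $\exp(\A,\mu) = (m+1,\, \mu_1+\mu_2 - 1) = (m+1, m+1)$, so $\DELTA(\mu) = 0$.

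For the reverse inclusion, I would take $\mu \in \LAMBDA(m)$ with $\mu_1 + \mu_2 = m+2$ and $\DELTA(\mu) = 0$. First rule out $\mu_1 = 0$ or $\mu_2 = 0$: if, say, $\mu_1 = 0$, then $\mu_2 = m+2 > m = \mu_3$, so $\mu$ is unbalanced and $\DELTA(\mu) = 2\mu_2 - |\mu| = 2$, contradicting $\DELTA(\mu) = 0$. Next, show $\mu \notin \GAMMA(m)$: otherwise \Cref{GAMMA_p(m)+} gives $\exp(\A,\mu) = (m,\, m+2)$ and $\DELTA(\mu) = 2$, again a contradiction. Finally verify minimality: for any $\nu \in \LAMBDA(m)$ with $\nu \subsetneq \mu$, we have $\nu_1 + \nu_2 \leq m+1$, so $\nu \in \GAMMA(m)$ by \Cref{leq implies GAMMA(m)}. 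Hence $\mu$ is minimal in $\LAMBDA(m) \setminus \GAMMA(m)$, which by \Cref{minimality of B} means $\mu \in B(m)$.

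There is no real obstacle here; the whole argument just reassembles results that were established earlier in the section. The only small subtlety is handling the boundary cases $\mu_1 = 0$ or $\mu_2 = 0$, which must be excluded by the unbalanced exponents formula rather than by any property of $\GAMMA(m)$.
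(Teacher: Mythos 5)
Your proposal is correct and follows essentially the same route as the paper's proof: the forward inclusion uses \Cref{minimality of B} together with \Cref{l412}, and the reverse inclusion deduces $\mu \notin \GAMMA(m)$ from the exponent formula in \Cref{GAMMA_p(m)+} and then gets minimality from \Cref{leq implies GAMMA(m)}. The only difference is that you explicitly rule out $\mu_1 = 0$ or $\mu_2 = 0$ as a separate step, which is harmless but not actually needed — the contradiction with $\DELTA(\mu) = 0$ via $\GAMMA(m)$ and \Cref{GAMMA_p(m)+} already covers those cases without invoking the unbalanced formula.
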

\begin{proof}
    Let $\mu \in B(m)$. Since $\mu$ is a minimal element of $\LAMBDA(m) \setminus \GAMMA(m)$ by \Cref{minimality of B}, we have $\exp(\A,\mu) = (m+1,\, \mu_1 + \mu_2) = (m + 1,\, m+1)$ by \Cref{l412}.
    Hence $\DELTA(\mu) = 0$.

    Conversely, suppose that $\mu \in \LAMBDA(m)$ belongs to the right-hand side of \eqref{BDELTA = 0}.
    Since $|\mu| = 2m+2$, we have $\exp(\A,\mu) = (m+1,\, m+1)$.
    Therefore $\mu \not\in \GAMMA(m)$.
    The minimality of $\mu \in \LAMBDA(m) \setminus \GAMMA(m)$ follows from \Cref{leq implies GAMMA(m)}.
\end{proof}

\begin{corollary}
    For $m \in \mathbb{Z}_{>0}$ and $j \in \{0,\ldots,m\}$, the following are equivalent:
    \begin{itemize}
        \item $\binom{m}{j} \not\equiv 0 \pmod{p}$;
        \item $\DELTA(j+1,\, m+1-j,\, m) = 0$.
    \end{itemize}
\end{corollary}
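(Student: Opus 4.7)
The plan is to set $\mu := (j+1,\, m+1-j,\, m)$ and observe that this multiplicity automatically sits on the slice $\mu_1 + \mu_2 = m+2$ of $\LAMBDA(m)$, regardless of $j \in \{0,\ldots,m\}$. The equivalence should then follow directly from comparing two different characterizations of the set $B(m)$ of minimal elements of $\LAMBDA(m) \setminus \GAMMA(m)$ already established in the paper.

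Concretely, I would first note that with this choice of $\mu$, we have $\mu_1 - 1 = j$, so the congruence $\binom{m}{j} \not\equiv 0 \pmod{p}$ is the same condition as $\binom{m}{\mu_1-1} \not\equiv 0 \pmod{p}$. By the definition of $B(m)$ given just before \Cref{minimality of B}, this says precisely that $\mu \in B(m)$, since the other defining condition $\mu_1 + \mu_2 = m+2$ is built into our choice.

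Next, I would apply the corollary characterizing $B(m)$ via $\DELTA$ (the one containing equation \eqref{BDELTA = 0}), which states that $B(m) = \{\nu \in \LAMBDA(m) \mid \nu_1 + \nu_2 = m+2,\ \DELTA(\nu) = 0\}$. Again, since $\mu_1 + \mu_2 = m+2$ is automatic, membership $\mu \in B(m)$ is equivalent to $\DELTA(\mu) = 0$. Chaining the two equivalences yields $\binom{m}{j} \not\equiv 0 \pmod{p} \iff \DELTA(j+1,\, m+1-j,\, m) = 0$, which is the desired statement.

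There is essentially no obstacle here; the entire content of the statement has been packaged into the two characterizations of $B(m)$ proved earlier, and the proof amounts to a single line of substitution. The only thing to double-check is the edge cases $j = 0$ and $j = m$: in both cases $\binom{m}{j} = 1 \not\equiv 0$, and correspondingly $\mu = (1, m+1, m)$ or $(m+1, 1, m)$, which lie in $B(m)$ and hence satisfy $\DELTA(\mu) = 0$ by \Cref{l412} applied to the adjacent element in $\GAMMA(m)$ with $\mu_1 + \mu_2 = m+1$; so the formula is consistent at the boundary as well.
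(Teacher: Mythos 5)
Your proof is correct and is essentially the argument the paper intends (it states the corollary without proof, precisely because it follows by chaining the definition of $B(m)$ with the preceding corollary \eqref{BDELTA = 0} after the substitution $\mu = (j+1,\, m+1-j,\, m)$, which forces $\mu_1 + \mu_2 = m+2$). The edge-case check at $j=0$ and $j=m$ is harmless but unnecessary, since those cases are already covered by the general chain of equivalences.
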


\begin{example}
    Let $m = 16$ and $p = 3$.
    Then $m = 1 \cdot 3^2 + 0 \cdot 3^1 + 1 \cdot 3^2$, and
    \begin{align}
        G_m = \Set{ 0,1,3,4,6,7,9,10,12,13,15,16 }.
    \end{align}
    Hence 
    \begin{align}
        B(m) &= \left\{ \begin{lgathered}
            (1,17,16),\ (2,16,16),\ (4,14,16),\ (5,13,16),\ (7,11,16),\ (8,10,16),\ \\
            (17,1,16),\ (16,2,16),\ (14,4,16),\ (13,5,16),\ (11,7,16),\ (10,8,16)
        \end{lgathered}\right\},\\
        S(m) &= \left\{ \begin{lgathered}
            (1,16,16),\ (3,15,16),\ (4,13,16),\ (6,12,16),\ (7,10,16),\ (9,9,16),\,\\
            (16,1,16),\ (15,3,16),\ (13,4,16),\ (12,6,16),\ (10,7,16)
        \end{lgathered}\right\}.
    \end{align}
    \Cref{fig:m=16p=3} shows the lowest degree of $D(\A,\mu)$ for each $\mu \in \LAMBDA(m)$, where the shaded area denotes $\GAMMA(m)$ and the brackets indicate the centers of connected components. 

    \begin{table}
    \centering
    \caption{The lower degree of $D(\A,\mu)$ for $\mu \in \LAMBDA(m)$ when $m=16$ and $p = 3$}
    
    \includegraphics[width=\linewidth]{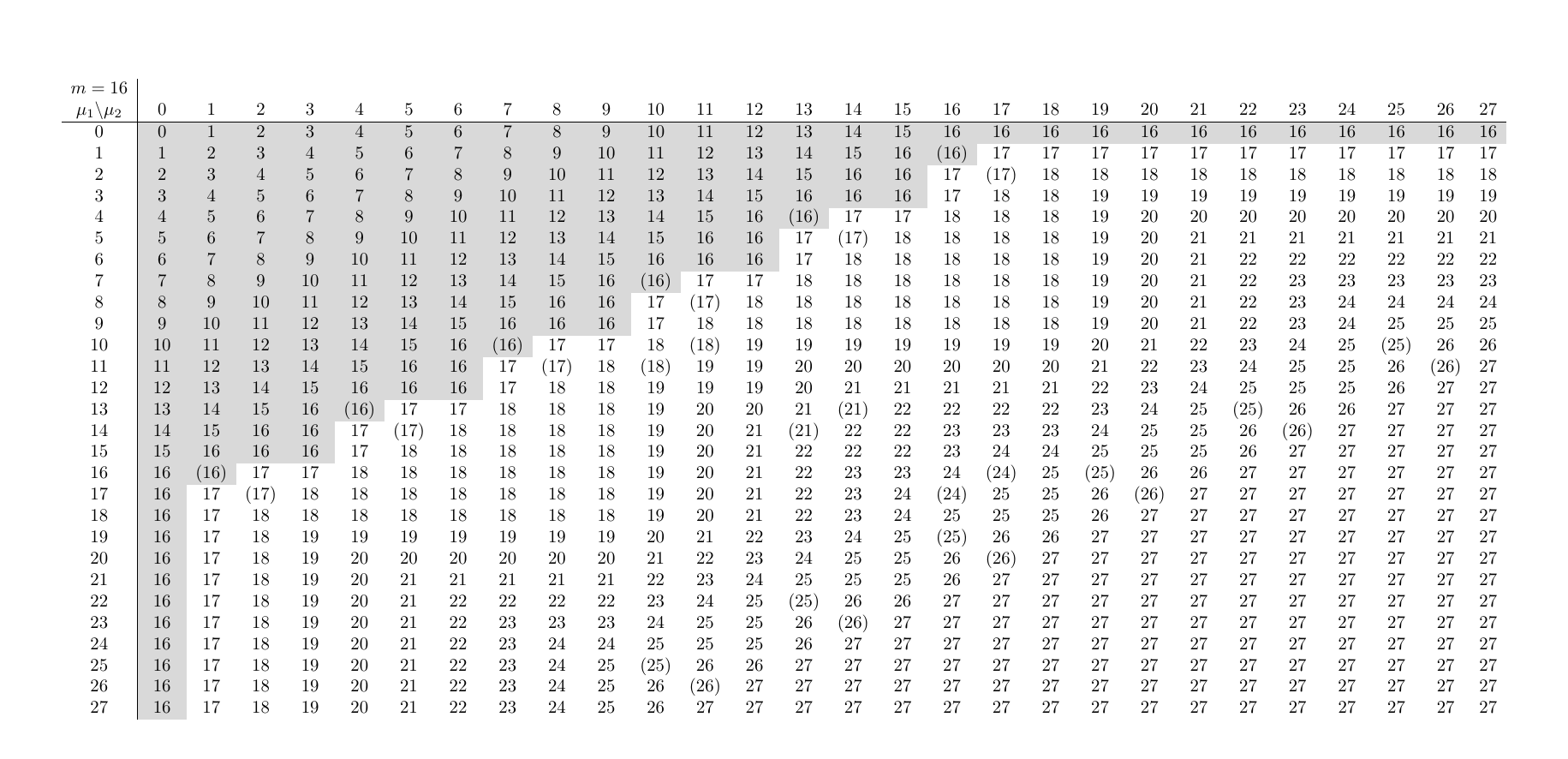}
    
    \label{fig:m=16p=3}
\end{table}

\end{example}

\begin{lemma} \label{gg'}
    \begin{align}
        S(m) = \Set{ (g,g',m) \in \LAMBDA(m) | g,g' \in G_m,\ \ g + g' = p^{s(g)} + \sum_{e \geq s(g)}c_e(m)p^e }.
    \end{align}
    Furthermore, if $(g,g',m) \in S(m)$, then $s(g) = s(g')$.
\end{lemma}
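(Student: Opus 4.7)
The plan is to unpack the combinatorial structure of the ordered enumeration $G_m = \{g_0 < g_1 < \cdots < g_t\}$, where each $g_i$ is encoded by a tuple of digits $(c_e(g_i))_e$ with $0 \leq c_e(g_i) \leq c_e(m)$. Since membership in $G_m$ is controlled independently at each digit position (subject to the cap $c_e(m)$), this is a mixed-radix numeral system. The first key step is to describe the successor: given $g_{i-1}$, let $e^* = e^*(i)$ denote the smallest $e$ with $c_e(g_{i-1}) < c_e(m)$. By minimality of $e^*$, all digits below $e^*$ of $g_{i-1}$ are already at their maximum $c_e(m)$, so $g_i$ is obtained from $g_{i-1}$ by incrementing digit $e^*$ by one and zeroing all lower digits. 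This yields the identity
\begin{align}
g_i - g_{i-1} = p^{e^*} - \sum_{e < e^*} c_e(m)\, p^e,
\end{align}
and the digit profile $c_e(g_i) = 0$ for $e < e^*$ together with $c_{e^*}(g_i) \geq 1$, so in particular $s(g_i) = e^*$.

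For the forward inclusion, I would take $(g_i, g_{t-i+1}, m) \in S(m)$ with $i \in \{1, \ldots, t\}$, apply the symmetry $g_{t-i+1} = m - g_{i-1}$ (which is \eqref{g+g=m} with $j = i-1$), and combine it with the identity above to obtain
\begin{align}
g_i + g_{t-i+1} = (g_i - g_{i-1}) + m = p^{e^*} + \sum_{e \geq e^*} c_e(m)\, p^e = p^{s(g_i)} + \sum_{e \geq s(g_i)} c_e(m)\, p^e.
\end{align}
The ``furthermore'' part follows at once: since $c_e(g_{t-i+1}) = c_e(m) - c_e(g_{i-1})$, this vanishes for $e < e^*$ and is strictly positive at $e = e^*$, so $s(g_{t-i+1}) = e^* = s(g_i)$.

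For the reverse inclusion, suppose $g, g' \in G_m$ satisfy $g + g' = p^{s(g)} + \sum_{e \geq s(g)} c_e(m) p^e$. Let $s = s(g)$ and set $h \coloneqq m - g'$. A direct computation gives
\begin{align}
h = g - p^s + \sum_{e < s} c_e(m)\, p^e,
\end{align}
which has digit expansion $c_e(h) = c_e(m)$ for $e < s$, $c_s(h) = c_s(g) - 1 \geq 0$, and $c_e(h) = c_e(g)$ for $e > s$. Hence $h \in G_m$, and moreover the successor recipe above shows that the successor of $h$ in $G_m$ is exactly $g$. Writing $g = g_i$, this forces $h = g_{i-1}$, hence $g' = m - g_{i-1} = g_{t-i+1}$ by \eqref{g+g=m}, so $(g, g', m) \in S(m)$.

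The main obstacle is simply articulating the successor (and predecessor) operation in $G_m$ cleanly enough that the arithmetic identity pops out; once that mixed-radix viewpoint is in place, every remaining step is digit bookkeeping.
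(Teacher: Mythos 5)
Your proof is correct and follows essentially the same route as the paper: both derive the mixed-radix successor identity $g_i = g_{i-1} + p^{s(g_i)} - \sum_{e < s(g_i)} c_e(m)\,p^e$ for consecutive elements of $G_m$ and combine it with the symmetry $g_{i-1} + g_{t-i+1} = m$ to get the target equation. The only difference is cosmetic: for the reverse inclusion you explicitly construct the predecessor $h = m - g'$ and verify its digit profile, whereas the paper just invokes the already-established forward identity and cancels $g = g_i$.
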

\begin{proof}
    Let $(g_i,g_{t-i+1},m) \in S(m)$.
    It can be seen that two adjacent numbers $g_{i-1}, g_i \in G_m$ have the relation
\begin{align}
    g_i = g_{i-1} + p^{d_{i-1}} - \sum_{0 \leq e < d_{i-1}}c_e(m)p^e, \label{gadj}
\end{align}
where $d_{i-1} \coloneqq \min\Set{e \in \mathbb{Z}_{>0} | c_e(g_{i-1}) < c_e(m) }$.
Furthermore, we have $s(g_i) = d_{i-1}$ by the above equation.
    It follows from \eqref{g+g=m} and \eqref{gadj} that
\begin{align}
    \begin{split}
        g_i + g_{t-i+1} &= m + p^{s(g_i)} - \sum_{0 \leq e < s(g_i)}c_e(m)p^e\\
        &= p^{s(g_i)} + \sum_{e \geq s(g_i)}c_e(m)p^e. \label{g+gS(m)}
    \end{split}
\end{align}
Furthermore, we have $s(g_i) = s(g_{t-i+1})$ by the above equations.

Conversely, suppose that $g,g' \in G_m$ satisfy
\begin{align}
    g + g' = p^{s(g)} + \sum_{e \geq s(g)}c_e(m)p^e.
\end{align}
We can assume that $g = g_i$ for $i \in \{1,\ldots,t\}$.
Then the equation \eqref{g+gS(m)} implies that $g' = g_{t-i+1}$.
Hence $(g,g',m)$ belongs to $S(m)$.
\end{proof}

\begin{lemma}\label{gg'v}
    Let $m \in \mathbb{Z}_{>0}$ and $g,g' \in G_m$ satisfy $(g,g',m) \in S(m)$.
    Then 
    \begin{align}
        \Set{ n \in \mathbb{Z}_{>0} | (g,g',n) \in S(n) } &= \Set{n \in \mathbb{Z}_{>0} | \text{$c_e(n) = c_e(m)$ for any $e \geq s(g)$} }\\
        &= \Set{ g + g' - v | v \in \{1,\ldots,p^{s(g)}\} }.
    \end{align}
\end{lemma}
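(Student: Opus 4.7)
The plan is to apply \Cref{gg'} with $n$ in place of $m$ and then perform base-$p$ bookkeeping. Write $s := s(g)$. Since $(g, g', m) \in S(m)$, \Cref{gg'} gives $s(g') = s$ and
\begin{align}
g + g' = p^{s} + \sum_{e \geq s} c_e(m) p^e.
\end{align}
First I would prove the first equality. Applying \Cref{gg'} with $n$ in place of $m$, the condition $(g, g', n) \in S(n)$ becomes: $g, g' \in G_n$ together with $g + g' = p^{s} + \sum_{e \geq s} c_e(n) p^e$. Subtracting the identity above, the arithmetic condition is equivalent to $c_e(n) = c_e(m)$ for all $e \geq s$. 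Under this digit equality, $g, g' \in G_n$ is automatic: for $e \geq s$ we have $c_e(n) = c_e(m) \geq c_e(g), c_e(g')$ because $g, g' \in G_m$; for $e < s$ the digits $c_e(g)$ and $c_e(g')$ vanish by the definition of $s = s(g) = s(g')$.

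For the second equality, I would parametrize the set characterized above. A positive integer $n$ satisfies $c_e(n) = c_e(m)$ for all $e \geq s$ if and only if $n = r + \sum_{e \geq s} c_e(m) p^e$ for some $r \in \{0, 1, \ldots, p^s - 1\}$; positivity is automatic since $c_s(m) \geq c_s(g) \geq 1$ forces the tail sum to be at least $p^s$. On the other side, the displayed identity yields $g + g' - v = (p^s - v) + \sum_{e \geq s} c_e(m) p^e$, and $p^s - v$ runs over $\{0, 1, \ldots, p^s - 1\}$ as $v$ runs over $\{1, \ldots, p^s\}$, so the two descriptions coincide.

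I do not expect any real obstacle here, since once \Cref{gg'} is invoked for $n$ the statement reduces to a short base-$p$ computation. The only points requiring care are that $s$ is intrinsic to $g$ (and equal to $s(g')$), so it is unchanged when $m$ is replaced by $n$; and that the membership $g, g' \in G_n$ follows from, rather than adds to, the digit-equality constraint on the upper digits of $n$. Once this bookkeeping is tracked, both equalities are immediate.
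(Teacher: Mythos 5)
Your proof is correct and follows essentially the same route as the paper's: both deduce the first equality by applying \Cref{gg'} for $m$ and for $n$ and then comparing base-$p$ digits at positions $\geq s(g)$, with the observation that $g,g' \in G_n$ comes for free from the digit equality and $c_e(g)=c_e(g')=0$ for $e<s(g)$. The paper's proof stops at the first equality and leaves the reparametrization $n = g+g'-v$ implicit; you spell that short bookkeeping out explicitly, which is fine.
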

\begin{proof}
    Suppose that $(g,g',n) \in S(n)$.
    Since $(g,g',m) \in S(m)$, by \Cref{gg'},
    \begin{align}
        g + g' = p^{s(g)} + \sum_{e \geq s(g)}c_e(m)p^e = p^{s(g)} + \sum_{e \geq s(g)}c_e(n)p^e \label{mandn}
    \end{align}
    holds, and hence $c_e(n) = c_e(m)$ for any $e \geq s(g)$.

    Let $n \in \mathbb{Z}_{>0}$ satisfy $c_e(n) = c_e(m)$ for any $e \geq s(g)$.
    Then $g,g' \in G_n$ since $c_e(g)$, $c_e(g') \leq c_e(m) = c_e(n)$.
    Moreover, since $g,g'$ satisfy \eqref{mandn}, \Cref{gg'} implies that $(g,g',n) \in S(n)$.
\end{proof}

Let $\mu \in \LAMBDA$ and $C$ be the connected component containing $\mu$.
For $t \in \{1,2,3\}$, define $C_{\mu,t}$ to be the set of $\nu \in C$ satisfying the following two conditions (See {\cite[Definition 2.5]{AN}):
\begin{itemize}
    \item $\nu_{t+1} = \mu_{t+1}$ and $\nu_{t+2} = \mu_{t+2}$;
    \item If $\nu \subseteq \kappa \subseteq \mu$ or $\mu \subseteq \kappa \subseteq \nu$, then $\kappa \in C$.
\end{itemize}
The set $C_{\mu,t}$ has the unique element $\kappa \in C_{\mu,t}$ such that $\DELTA(\kappa) \geq \DELTA(\nu)$ for any $\nu \in C_{\mu,t}$.
In other words, the restriction $\DELTA|_{C_{\mu,t}}$ is unimordal.
We call this element $\kappa$ the \textit{peak element} of $C_{\mu,t}$ (See {\cite[Lemma 4.5 and Definition 4.6]{AN}).

\begin{lemma}\label{l416}
    Every $\kappa = (g,g',m) \in S(m)$ is the peak element of $C_{\kappa,t}$ for any $t \in \{1,2\}$.
\end{lemma}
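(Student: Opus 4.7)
The plan is to exhibit $\kappa$ as a local maximum of $\DELTA$ along $C_{\kappa,t}$, and then to invoke the unimodality of $\DELTA|_{C_{\kappa,t}}$ asserted by Abe--Numata in order to upgrade ``local maximum'' to ``global maximum,'' which is precisely the definition of peak element.

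First I would record the basic data attached to $\kappa = (g,g',m)$. By \Cref{GAMMAIS(m)}, $\kappa$ is a maximal element of $\GAMMA(m)$, and then \Cref{GAMMA_p(m)+} gives $\exp(\A,\kappa) = (m,\, g+g')$. To ensure that $\kappa$ lies in some connected component of $\LAMBDA_{>0}$, I would rewrite $g+g'-m$ using \Cref{gg'}: combining $g+g' = p^{s(g)} + \sum_{e \geq s(g)} c_e(m)\,p^e$ with $m = \sum_{e} c_e(m)\,p^e$ yields $\DELTA(\kappa) = g+g'-m = p^{s(g)} - \sum_{0 \leq e < s(g)} c_e(m)\,p^e \geq 1$, where the last inequality follows from the digit bound $c_e(m) \leq p-1$. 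Furthermore, writing $g = g_i$ and $g' = g_{|G_m|-i}$ with $i \geq 1$ shows that $g, g' \geq 1$, so $\kappa - \bm{e}_t \in \LAMBDA(m)$ is a well-defined neighbor for any $t \in \{1,2\}$.

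Next I would analyze the two immediate neighbors of $\kappa$ in direction $\bm{e}_t$. For the ``$+$'' side: the proof of \Cref{GAMMAIS(m)} already shows $\kappa + \bm{e}_t \notin \GAMMA(m)$, since this point dominates one of the minimal elements $\nu',\nu'' \in B(m)$ produced there. Applying \Cref{l412} to the adjacent pair $\kappa \subseteq \kappa + \bm{e}_t$ with $\kappa \in \GAMMA(m)$ and $\kappa + \bm{e}_t \notin \GAMMA(m)$ then gives $\DELTA(\kappa + \bm{e}_t) < \DELTA(\kappa)$. For the ``$-$'' side: \Cref{Gamma is an order ideal} guarantees $\kappa - \bm{e}_t \in \GAMMA(m)$, and then \Cref{GAMMA_p(m)+} provides $\exp(\A,\kappa - \bm{e}_t) = (m,\, g+g'-1)$, so $\DELTA(\kappa - \bm{e}_t) = g+g'-1-m = \DELTA(\kappa)-1 < \DELTA(\kappa)$.

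Together these two inequalities show that any neighbor of $\kappa$ on the line $\kappa + \mathbb{Z}\bm{e}_t$ that still belongs to $C$ has strictly smaller $\DELTA$ than $\kappa$; the only other possibility is that the neighbor has already left $C$ (when $\DELTA$ drops to zero), in which case it is excluded from $C_{\kappa,t}$ by definition. Hence $\kappa$ is a local maximum of $\DELTA|_{C_{\kappa,t}}$, and by unimodality it is the unique global maximum, i.e., the peak element of $C_{\kappa,t}$. I do not anticipate a serious obstacle: the arithmetic describing $S(m)$ via base-$p$ digits has already been isolated in \Cref{gg'}, so the proof merely assembles the three preceding results.
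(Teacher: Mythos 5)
Your proposal is correct and follows essentially the same route as the paper's proof: show that $\DELTA$ drops when moving to either neighbor $\kappa \pm \bm{e}_t$ (using membership in $\GAMMA(m)$ and \Cref{GAMMA_p(m)+} on the minus side, and non-membership together with \Cref{l412} on the plus side), then invoke the unimodality of $\DELTA|_{C_{\kappa,t}}$ to upgrade the local maximum to the peak. The few extra checks you include (that $\DELTA(\kappa)\geq 1$, that $g,g'\geq 1$, and the explicit appeal to \Cref{Gamma is an order ideal}) are sensible clarifications that the paper leaves implicit, not a different argument.
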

\begin{proof}
    To prove the statement, since $\DELTA|_{C_{\kappa,t}}$ is unimordal, we only show that $\DELTA(\mu) < \DELTA(\kappa)$ for any $\nu \in C_{\kappa,t}$ adjacent to $\kappa \in S(m)$. 

    If $\mu \subseteq \kappa$, then $\nu \in \GAMMA(m)$.
    Since $g + g' \geq m+1$ by \eqref{g+gS(m)}, it satisfies $\mu_1 + \mu_2 \geq m$.
    Hence \Cref{GAMMA_p(m)+} implies that 
    \begin{align}
        \DELTA(\mu) = |\mu| - 2m  < |\kappa| - 2m = \DELTA(\kappa).
    \end{align}
    Conversely, if $\mu \supseteq \kappa$, then $\mu \not\in \GAMMA(m)$.
    Hence \Cref{l412} implies that $\DELTA(\mu) < \DELTA(\kappa)$.
    Thus $\kappa$ is the peak element of $C_{\kappa,t}$.
\end{proof}

\begin{theorem}
    If $\kappa = (g,g',m) \in S(m)$ satisfies $m = g + g' - p^{s(g)}$, then $\kappa \in Z(p^{s(g)})$.
\end{theorem}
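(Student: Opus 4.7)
Denote $s \coloneqq s(g)$. The plan is to apply the characterization of centers in \Cref{CCThm}(2): it suffices to show $\kappa \in \LAMBDA^\bal_{>0}$, compute $\DELTA(\kappa) = p^s$, and prove $\DELTA(\mu) < p^s$ for each of the six neighbors $\mu = \kappa \pm \bm{e}_i$, $i \in \{1, 2, 3\}$. Balance is immediate because $g, g' \in G_m$ forces $g, g' \leq m$, so $\max \kappa = m$ and $|\kappa|/2 = m + p^s/2 \geq m$. Since $\kappa \in S(m) \subseteq \GAMMA(m)$, \Cref{GAMMA_p(m)+} yields $\exp(\A, \kappa) = (m, g + g')$ and hence $\DELTA(\kappa) = g + g' - m = p^s$.

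For $i \in \{1, 2\}$, I would invoke \Cref{l416}: $\kappa$ is the peak element of $C_{\kappa, i}$, so whenever $\mu = \kappa \pm \bm{e}_i$ lies in the connected component of $\kappa$ we have $\DELTA(\mu) \leq \DELTA(\kappa)$, and the inequality is strict by $|\DELTA(\mu) - \DELTA(\kappa)| = 1$ from \Cref{AN1}(1); otherwise $\DELTA(\mu) = 0 < p^s$. For $\mu = \kappa + \bm{e}_3$, the key point is that $\psi_\kappa$ is, up to scalar, the unique element of minimal degree $m$ in $D(\A, \kappa)$, yet $\psi_\kappa \notin D(\A, \mu)$ because $\psi_\kappa(x + y) = (x + y)^m$ is not divisible by $(x + y)^{m + 1}$. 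Combined with $D(\A, \mu) \subseteq D(\A, \kappa)$, this rules out any element of degree $\leq m$ in $D(\A, \mu)$; on the other hand $(x + y) \psi_\kappa \in D(\A, \mu)$ has degree $m + 1$, so the lower exponent of $D(\A,\mu)$ equals $m + 1$ and $\DELTA(\mu) = p^s - 1$.

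The main obstacle will be $\mu = \kappa - \bm{e}_3$. By \Cref{AN1}(1), $\DELTA(\mu) \in \{p^s - 1, p^s + 1\}$, so it suffices to rule out $\DELTA(\mu) = p^s + 1$; under that hypothesis, $\exp(\A, \mu) = (m - 1, g + g')$ and \Cref{GAMMA_p(m)+} would force $\mu \in \GAMMA(m - 1)$. I intend to contradict this via \Cref{GAMMA_p(m)} by exhibiting $j = g - 1 \in J_\mu = \{j : m - 1 - g' < j < g\}$ (membership from $g + g' > m$) with $\binom{m - 1}{g - 1} \not\equiv 0 \pmod{p}$. The facts $c_e(g) = 0$ for $e < s$, $c_s(g) \geq 1$, $s(g) = s(g')$ (from \Cref{gg'}), and $c_e(g) \leq c_e(m)$ for all $e$, together with $g + g' = m + p^s$, will imply $c_e(m) = 0$ for $e < s$ and $c_s(m) \geq c_s(g) \geq 1$. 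Borrowing then yields $c_e(m - 1) = c_e(g - 1) = p - 1$ for $e < s$, $c_s(g - 1) = c_s(g) - 1 \leq c_s(m) - 1 = c_s(m - 1)$, and $c_e(g - 1) \leq c_e(m - 1)$ for $e > s$. Lucas' theorem delivers the desired nonvanishing, completing the center verification and hence $\kappa \in Z(p^{s(g)})$.
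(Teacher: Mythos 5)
Your proof is correct and takes essentially the same approach as the paper: verify the characterization of centers from \Cref{CCThm}(2) by computing $\DELTA(\kappa)=p^{s(g)}$ and comparing $\DELTA$ with all adjacent multiplicities, invoking \Cref{l416} for the $\pm\bm{e}_1,\pm\bm{e}_2$ directions and a base-$p$/Lucas argument for $\pm\bm{e}_3$. Your minor variations -- a direct module argument for $\kappa+\bm{e}_3$ (uniqueness of the minimal-degree element $\psi_\kappa$ together with $\psi_\kappa(x+y)=(x+y)^m$) in place of \Cref{gg'v}, and checking $\binom{m-1}{g-1}\not\equiv 0\pmod{p}$ against $J_\mu$ directly rather than exhibiting a minimal element of $\LAMBDA(m-1)\setminus\GAMMA(m-1)$ -- land on the same underlying base-$p$ computations as the paper's proof.
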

\begin{proof}
    %Let $\kappa \in S(m)$ satisfy $m = g + g' - p^{s(g)}$.
    To prove that $\kappa \in Z(p^{s(g)})$, we show that $\kappa$ is the peak element of $C_{\kappa,3}$.
    Let $\nu \coloneqq (g,\, g',\, m+1) \in C_{\kappa,3}$.
    Then $\nu \in S(m+1)$ by \Cref{gg'v}.
    Therefore it follows from \Cref{GAMMAIS(m)} and \Cref{GAMMA_p(m)} that $\exp(\A,\nu) = (m+1,\, g+g')$, and hence
    \begin{align}
        \DELTA(\nu) = g + g' - (m+1) < g + g' - m = \DELTA(\kappa).
    \end{align}
    Let $\mu \coloneqq (g,\,g',\,m-1) \in C_{\kappa,3}$.
    Since $g + g' = m + p^{s(g)}$, \Cref{gg'} implies that $s(m) = s(g)$, and hence
    \begin{align}
        m - 1 = \sum_{e > s(g)}c_e(m)p^e + \Bigl(c_{s(g)}(m)-1\Bigr)p^{s(g)} + \sum_{0 \leq e < s(g)}(p-1)p^e.
    \end{align}
    Define
    \begin{align}
        h &\coloneqq \sum_{e > s(g)}c_e(g) + \Bigl( c_{s(g)}(g) - 1 \Bigr)p^{s(g)} + \sum_{0 \leq e < s(g)}(p-1)p^e,\\
        h' &\coloneqq \sum_{e > s(g)}c_e(g') + \Bigl( c_{s(g)}(g') - 1 \Bigr)p^{s(g)}.
    \end{align}
    Then $h < g$ and $h' < g'$, that is, $(h+1,\, h'+1,\, m-1) \subseteq \mu$.
    Moreover, $h$ and $h'$ belong to $G_{m-1}$, and $h + h' = m-1$.
    Hence $(h+1,\, h'+1,\, m-1) \in B(m-1)$ by \eqref{g+g=m} and \Cref{Bgg}.
    This implies that $\mu \not\in \GAMMA(m-1)$.
    Then $\deg{\theta_\mu} > m-1$ by \Cref{GAMMA_p(m)+}, and hence
    \begin{align}
        \DELTA(\mu) = |\mu| - 2\deg{\theta_\mu} < |\kappa|-2m = \DELTA(\kappa).
    \end{align}
    Therefore $\kappa$ is the peak element of $C_{\kappa,3}$, and \Cref{l416} implies that $\kappa \in Z$.
    Since $\DELTA(\kappa) = |\kappa| - 2m = p^{s(g)}$, we have $\kappa \in Z(p^{s(g)})$.
\end{proof}

The centers obtained above are those of the connected components of $\LAMBDA^\bal_{>0}$ closest to the unbalanced area $\Set{\mu \in \LAMBDA | \mu_1 + \mu_2 < \mu_3}$.
As in exponents, it is sufficient to study bases only for the centers of the connected components, but it would be difficult to explicitly represent the bases of connected components other than those obtained above.

\begin{example}\label{eg:334_ch23}
    Let $\mu = (3,3,4)$.
    We compute a basis and exponents for $D(\A,\mu)$ when $\Char{\mathbb{F}} = 2$ and $3$.
    See also \Cref{eg334} for the case $\Char{\mathbb{F}} = 0$.

    First, suppose that $\Char{\mathbb{F}} = 2$.
    Then $G_4 = \{0,4\}$, and hence $S(4) = \{(4,4,4)\}$.
    Since $\mu \subseteq (4,4,4)$, we have $\mu \in \GAMMA(4)$.
    Therefore 
    \begin{align}
        \{\psi_\mu,\psi_\mu'\} = \left\{ x^4\partial_x + y^4\partial_y,\ x^3y^3(\partial_y - \partial_x) \right\}
    \end{align}
    is a basis for $D(\A,\mu)$ and $\exp(\A,\mu) = (4,6)$.
    
    Second, suppose that $\Char{\mathbb{F}} = 3$.
    Then $G_4 = \{0,1,3,4\}$, and hence $\mu = (3,3,4) \in S(4)$.
    We have $\mu \in \GAMMA(4)$.
    Therefore 
    \begin{align}
        \{\psi_\mu,\psi_\mu'\} = \left\{ (x^4+x^3y)\partial_x + (xy^3 + y^4)\partial_y,\ x^3y^3(\partial_y-\partial_x) \right\}
    \end{align}
    is a basis for $D(\A,\mu)$ and $\exp(\A,\mu) = (4,6)$.
\end{example}

\begin{example}
    Let $p = 3$ and $\mu = (41,52,31)$.
    In \Cref{eg31}, we checked $\exp(\A,\mu) = (58,66)$.
    Define $\nu \coloneqq \mu - (27,27,0) = (14,25,31)$.
    By \Cref{Period2}, we have $\exp(\A,\nu) = \exp(\A,\mu) - (27,27) = (31,39)$, and \Cref{GAMMA_p(m)+} implies that $\nu \in \GAMMA(31)$.
    Hence a basis $\{\theta_\nu,\theta_\nu'\}$ for $D(\A,\nu)$ can be computed as 
    \begin{align}
        \theta_\nu = \psi_\nu &= \sum_{j=14}^{31}\binom{31}{j}x^jy^{31-j}\partial_x + \sum_{j=0}^{13}\binom{31}{j}x^jy^{31-j}\partial_y\\
        &= (x^{31} + x^{30}y  + x^{28}y^3 + x^{27}y^4)\partial_x + (x^4y^{27} + x^3y^{28} + xy^{30} + y^{31})\partial_y,\\[6pt]
        \theta_\nu' = \psi_\nu' &= x^{14}y^{25}(\partial_y - \partial_x).
    \end{align}
    By \Cref{Period2}, we can compute a basis $\{\theta_\mu,\theta_\mu'\}$ for $D(\A,\mu)$ as
    \begin{align}
        \theta_\mu = \PSI_3(\theta_\nu) &= (x^{58} + x^{57}y  + x^{55}y^3 + x^{54}y^4)\partial_x - (x^4y^{54} + x^3y^{55} + xy^{57} + y^{58})\partial_y,\\[6pt]
        \theta_\mu' = \PSI_3(\theta_\nu') &= - x^{41}y^{25}\partial_x - x^{14}y^{52}\partial_y.
    \end{align}
\end{example}

Let $m,d \in \mathbb{Z}_{\geq 0}$ satisfy $m \leq p^d$ and let $\mu \in \LAMBDA_{\leq p^d} \cap \LAMBDA(m)$.
Recall that 
\begin{align}
    \mu^\vee = (p^d - \mu_2,\, p^d-\mu_1,\, m).
\end{align}
Define 
\begin{align}
    \GAMMA^\vee(m) \coloneqq \Set{\mu \in \LAMBDA_{\leq p^d} | \mu^\vee \in \GAMMA(m) }.
\end{align}
Then 
\begin{align}
    (\psi_\mu)^\vee &= \sum_{j=\mu_1}^m\binom{m}{j}x^{p^d-\mu_2+m-j}y^{j-\mu_1}\partial_x - \sum_{j=0}^{\mu_1-1}\binom{m}{j}x^{m-j-\mu_2}y^{p^d  - \mu_1 + j}\partial_y,\\ 
    (\psi_\mu')^\vee &= x^{p^d}\partial_x + y^{p^d}\partial_y,
\end{align}
and \Cref{veebasis} implies the following.

\begin{proposition}
\begin{align}
    \GAMMA^\vee(m) = \Set{\mu \in \LAMBDA(m) | \text{$\{(\psi_\mu)^\vee,(\psi_\mu')^\vee\}$ is a basis for $D(\A,\mu^\vee)$}}.
\end{align}
\end{proposition}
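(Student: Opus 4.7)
The plan is to apply \Cref{veebasis} directly, taking $\theta = \psi_\mu$ and $\theta' = \psi_\mu'$. That theorem yields the equivalence: $\{\psi_\mu,\psi_\mu'\}$ is a basis for $D(\A,\mu)$ if and only if $\{(\psi_\mu)^\vee,(\psi_\mu')^\vee\}$ is a basis for $D(\A,\mu^\vee)$. The left-hand condition is, by the very definition of $\GAMMA(m)$, equivalent to $\mu\in\GAMMA(m)$. Swapping the roles of $\mu$ and $\mu^\vee$ via the involution $(\mu^\vee)^\vee=\mu$ (and using $((\psi_{\mu^\vee})^\vee)^\vee \doteq \psi_{\mu^\vee}$) then matches this up with membership in $\GAMMA^\vee(m)$, giving the desired set equality.

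Before applying \Cref{veebasis}, one must confirm that the displayed formulas for $(\psi_\mu)^\vee$ and $(\psi_\mu')^\vee$ agree with the abstract prescription $\theta^\vee = g\, x^{\mu_1^\vee}\partial_x - f\, y^{\mu_2^\vee}\partial_y$ for $\theta = f\, x^{\mu_1}\partial_x + g\, y^{\mu_2}\partial_y$. For $\psi_\mu'$ this is immediate: reading off $f=-y^{\mu_2}$ and $g=x^{\mu_1}$ yields $(\psi_\mu')^\vee = x^{p^d}\partial_x + y^{p^d}\partial_y$. For $\psi_\mu$ one extracts $x^{\mu_1}$ from its $\partial_x$-coefficient and $y^{\mu_2}$ from its $\partial_y$-coefficient, the latter being possible precisely when $\mu\in\GAMMA(m)$ (by the binomial vanishing in \Cref{GAMMA_p(m)}), and then a short reindexing yields the stated formula.

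The main step is therefore just the invocation of \Cref{veebasis}; no serious obstacle is anticipated, as the argument is essentially a single citation. The only point requiring care is the notational bookkeeping around the involution $\mu \leftrightarrow \mu^\vee$ on multiplicities and the compatible involution $\theta \leftrightarrow \theta^\vee$ on vector fields (noting $(\theta^\vee)^\vee = -\theta$), so as to ensure that the equivalence furnished by \Cref{veebasis} matches exactly with the defining property of $\GAMMA^\vee(m)$.
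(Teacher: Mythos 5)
Your opening move is exactly the paper's: the proposition is presented as an immediate consequence of \Cref{veebasis}, and your plan is to invoke \Cref{veebasis} with $\theta = \psi_\mu$, $\theta' = \psi_\mu'$. That step is fine and gives precisely the equivalence
\begin{align}
\mu \in \GAMMA(m) \iff \{(\psi_\mu)^\vee,(\psi_\mu')^\vee\} \text{ is a basis for } D(\A,\mu^\vee),
\end{align}
so the set on the right-hand side of the proposition is identified with $\GAMMA(m) \cap \LAMBDA_{\leq p^d}$.

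The gap is in your final sentence. You then claim that ``swapping the roles of $\mu$ and $\mu^\vee$ via the involution'' converts membership in $\GAMMA(m)$ into membership in $\GAMMA^\vee(m)$. But that swap affects the right-hand side of the equivalence too: substituting $\mu^\vee$ for $\mu$ in the displayed equivalence (and using $(\mu^\vee)^\vee = \mu$) yields
\begin{align}
\mu \in \GAMMA^\vee(m) \iff \{(\psi_{\mu^\vee})^\vee,(\psi_{\mu^\vee}')^\vee\} \text{ is a basis for } D(\A,\mu),
\end{align}
which is \emph{not} the condition appearing in the statement you are asked to prove — the statement has $\psi_\mu$ and $D(\A,\mu^\vee)$, whereas the swapped equivalence has $\psi_{\mu^\vee}$ and $D(\A,\mu)$. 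Reconciling the two would require knowing in advance that $(\psi_{\mu^\vee})^\vee \doteq \psi_\mu$ (not true in general) or that $\GAMMA(m) \cap \LAMBDA_{\leq p^d} = \GAMMA^\vee(m)$, and you establish neither. In fact neither holds: e.g.\ with $p = 2$, $p^d = 4$, $m = 3$, $\mu = (3,3,3)$ one has $\mu^\vee = (1,1,3) \in \GAMMA(3)$ so $\mu \in \GAMMA^\vee(3)$, yet $\psi_\mu(y) = y^3 + xy^2 + x^2 y$ is not divisible by $y^3$, so $\mu \notin \GAMMA(3)$ and $(\psi_\mu)^\vee$ is not even a polynomial vector field. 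Thus your ``swap'' papers over a genuine discrepancy rather than resolving it; you should either prove $\GAMMA(m)\cap\LAMBDA_{\leq p^d} = \GAMMA^\vee(m)$ (which the example above shows is false), or note that the correct conclusion of the \Cref{veebasis} argument is the swapped form with $\psi_{\mu^\vee}$ and $D(\A,\mu)$.
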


\section*{Acknowledgment}
The authors wish to thank Professor Yasuhide Numata for his valuable advice and providing a program to efficiently compute bases for the modules of logarithmic vector fields based on an algorithm in \cite{numata2007algorithm}. 
S. T. was supported by JSPS KAKENHI, Grant Number JP22K13885.

\bibliographystyle{amsplain}
\bibliography{bibfile}

\providecommand{\bysame}{\leavevmode\hbox to3em{\hrulefill}\thinspace}
\providecommand{\MR}{\relax\ifhmode\unskip\space\fi MR }
% \MRhref is called by the amsart/book/proc definition of \MR.
\providecommand{\MRhref}[2]{%
  \href{http://www.ams.org/mathscinet-getitem?mr=#1}{#2}
}
\providecommand{\href}[2]{#2}
\begin{thebibliography}{10}

\bibitem{AA}
T.~Abe, \emph{Chambers of 2-affine arrangements and freeness of
  3-arrangements}, Journal of Algebraic Combinatorics \textbf{38} (2013),
  65--78.

\bibitem{AN}
T.~Abe and Y.~Numata, \emph{Exponents of 2-multiarrangements and multiplicity
  lattices}, Journal of Algebraic Combinatorics \textbf{35} (2012), no.~1,
  1--17.

\bibitem{abe2013free-mz}
T.~Abe and M.~Yoshinaga, \emph{Free arrangements and coefficients of
  characteristic polynomials}, Mathematische Zeitschrift \textbf{275} (2013),
  no.~3-4, 911--919.

\bibitem{FWY}
M.~Feigin, Z.~Wang, and M.~Yoshinaga, \emph{Integral expressions for
  derivations of multiarrangements}, International Journal of Mathematics
  \textbf{36} (2025), no.~04, 2450085, Publisher: World Scientific Publishing
  Co.

\bibitem{numata2007algorithm}
Y.~Numata, \emph{An {Algorithm} to {Construct} {A} {Basis} for the {Module} of
  {Logarithmic} {Vector} {Fields}}, June 2007, arXiv:0707.0004 [math].

\bibitem{saito1980theory-jotfosuotsim}
K.~Saito, \emph{Theory of logarithmic differential forms and logarithmic vector
  fields}, Journal of the Faculty of Science. University of Tokyo. Section IA.
  Mathematics \textbf{27} (1980), no.~2, 265--291.

\bibitem{terao1983exponents-spc}
H.~Terao, \emph{The exponents of a free hypersurface}, Singularities, {Part} 2
  ({Arcata}, {Calif}., 1981), Proc. {Sympos}. {Pure} {Math}., vol.~40, Amer.
  Math. Soc., Providence, RI, 1983, pp.~561--566.

\bibitem{W}
A.~Wakamiko, \emph{On the {Exponents} of 2-{Multiarrangements}}, Tokyo Journal
  of Mathematics \textbf{30} (2007), no.~1, 99--116.

\bibitem{yoshinaga2004characterization-im}
M.~Yoshinaga, \emph{Characterization of a free arrangement and conjecture of
  {Edelman} and {Reiner}}, Inventiones mathematicae \textbf{157} (2004), no.~2,
  449--454.

\bibitem{yoshinaga2005freeness-botlms}
\bysame, \emph{On the freeness of 3-arrangements}, Bulletin of the London
  Mathematical Society \textbf{37} (2005), no.~01, 126--134.

\bibitem{Zi}
G.~M. Ziegler, \emph{Multiarrangements of hyperplanes and their freeness},
  Contemporary Mathematics \textbf{90} (1989), 345--359.

\end{thebibliography}

\end{document}